\newtheorem{Lm}{Lemma}[section]
\newtheorem{Thm}[Lm]{Theorem}
\newtheorem{Prop}[Lm]{Proposition}
\newtheorem{Def}[Lm]{Definition}
\newtheorem{Rem}[Lm]{Remark}
\newtheorem{Cor}{Corollary}
\newtheorem*{GL}{Gap Lemma}
\def\R{\mathbb{R}}
\def\Z{\mathbb{Z}}
\def\N{\mathbb{N}}
\def\T{\mathbb{T}}
\def\bdef{\begin{Def}}
\def\endef{\end{Def}}
\def\bthm{\begin{Thm}}
\def\ethm{\end{Thm}}
\def\bprop{\begin{Prop}}
\def\enprop{\end{Prop}}
\def\blm{\begin{Lm}}
\def\elm{\end{Lm}}
\def\bcor{\begin{Cor}}
\def\ecor{\end{Cor}}
\def\brm{\begin{Rem}}
\def\erm{\end{Rem}}
\def\bfig{\begin{picture}}
\def\efig{\end{picture}}
\def\beq{\begin{eqnarray}}
\def\eneq{\end{eqnarray}}
\def\beal{\begin{aligned}}
\def\enal{\end{aligned}}
\title[The Weakly Coupled Fibonacci Hamiltonian]{Spectral and Quantum Dynamical Properties of the Weakly Coupled Fibonacci Hamiltonian}
\author{David Damanik}
\address{Department of Mathematics, Rice University, Houston, TX~77005, USA}
\email{damanik@rice.edu}
\thanks{D.\ D.\ was supported in part by NSF grants DMS--0653720 and DMS--0800100.}
\author{Anton Gorodetski}
\address{Department of Mathematics, University of California, Irvine CA 92697, USA}
\email{asgor@math.uci.edu}
\thanks{A.\ G.\ was supported in part by NSF grant DMS--0901627.}
\date{\today}
\begin{document}

\begin{abstract}
We consider the spectrum of the Fibonacci Hamiltonian for small
values of the coupling constant. It is known that this set is a
Cantor set of zero Lebesgue measure. Here we study the limit, as the
value of the coupling constant approaches zero, of its thickness and
its Hausdorff dimension. We prove that the thickness
tends to infinity and, consequently, the Hausdorff dimension of the
spectrum tends to one. We also show that at small coupling, all gaps allowed by
the gap labeling theorem are open and the length of every gap tends to
zero linearly. Moreover, for sufficiently small coupling, the sum of
the spectrum with itself is an interval. This last result provides a
rigorous explanation of a phenomenon for the Fibonacci square
lattice discovered numerically by Even-Dar Mandel and Lifshitz.
Finally, we provide explicit upper and lower bounds for the solutions
to the difference equation and use them to study the spectral measures
and the transport exponents.
\end{abstract}

\maketitle

\tableofcontents

\section{Introduction}

\subsection{Background and Motivation}

It is always exciting to obtain a new connection between two
different areas of mathematics. Here we prove several new results
concerning the spectral properties of the discrete Schr\"odinger operator with
Fibonacci potential, the so-called Fibonacci Hamiltonian, using
methods from the modern theory of dynamical systems (uniformly
hyperbolic and normally hyperbolic dynamics).

The Fibonacci Hamiltonian is a central model in the study of
electronic properties of one-dimensional quasicrystals. It is
given by the following bounded self-adjoint operator in
$\ell^2(\Z)$,
$$
[H_{V,\omega} \psi] (n) = \psi(n+1) + \psi(n-1) + V
\chi_{[1-\alpha , 1)}(n\alpha + \omega \!\!\! \mod 1) \psi(n),
$$
where $V > 0$, $\alpha = \frac{\sqrt{5}-1}{2}$, and $\omega \in \T
= \R / \Z$.

This operator family has been studied in many papers since the
early 1980's and numerous fundamental results are known. Let us
recall some of them and refer the reader to the survey articles
\cite{D00,D07a,S95} for additional information.

The spectrum is easily seen to be independent of $\omega$ and may
therefore be denoted by $\Sigma_V$. That is, $\sigma(H_{V,\omega})
= \Sigma_V$ for every $\omega \in \T$. Indeed, this follows
quickly from the minimality of the irrational rotation by $\alpha$
and strong operator convergence. It was shown by S\"ut\H{o} that
$\Sigma_V$ has zero Lebesgue measure for every $V > 0$; see
\cite{S89}. Moreover, it is compact (since it is the spectrum of a
bounded operator) and perfect (because the irrational rotation by
$\alpha$ is ergodic). Thus, $\Sigma_V$ is a zero-measure Cantor
set. This result was recently strengthened by Cantat \cite{Can}
who showed that the Hausdorff dimension of $\Sigma_V$ lies
strictly between zero and one.

Naturally, one is interested in fractal properties of $\Sigma_V$,
such as its dimension, thickness, and denseness.  While such a study
is well-motivated from a purely mathematical perspective, we want to
point out that there is significant additional interest in these
quantities. In particular, it has recently been realized that the
fractal dimension of the spectrum is intimately related with the
long-time asymptotics of the solution to the associated
time-dependent Schr\"odinger equation, that is, $i \partial_t \phi =
H_{V,\omega} \phi$; see \cite{DEGT}.

Fractal properties of $\Sigma_V$ are by now well understood for large values of
$V$. Work of Casdagli \cite{Cas} and S\"ut\H{o} \cite{S87} shows
that for $V \ge 16$, $\Sigma_V$ is a dynamically defined Cantor
set. It follows from this result that the Hausdorff dimension and
the upper and lower box counting dimension of $\Sigma_V$ all
coincide; let us denote this common value by $\dim
\Sigma_V$. Using this result, Damanik, Embree, Gorodetski, and
Tcheremchantsev have shown upper and lower bounds for the
dimension; see \cite{DEGT}. A particular consequence of these
bounds is the identification of the asymptotic behavior of the
dimension as $V$ tends to infinity:
$$
\lim_{V \to \infty} \dim \Sigma_V \cdot \log V = \log (1 + \sqrt{2}).
$$
The paper \cite{DEGT} also discusses some of the implications for
the dynamics of the Schr\"odinger equation; let us mention
\cite{DT07,DT08} for further recent advances in this direction for
the strongly coupled Fibonacci Hamiltonian.

By contrast, hardly anything about $\Sigma_V$ (beyond it having
 Hausdorff dimension strictly between zero and one) is known for small values of $V$.
The largeness of $V$ enters the proofs of the existing results in
critical ways. Consequently, these proofs indeed break down once
the largeness assumption is dropped. The purpose of this paper is to fill out this gap by completely different methods.

We would like to emphasize that quantitative properties of regular
Cantor sets such as thickness and denseness are widely used in
dynamical systems (see \cite{N79, N70}, \cite{PT}, \cite{Me}) and they have found
an application in number theory (see \cite{As00, As01, As02}, \cite{Cus}, \cite{Ha}, \cite{Hl}), but to the best of
our knowledge, these kinds of techniques have never been used before
in the context of mathematical physics.

\subsection{Statement of the Main Results}

In this subsection we describe our results for small coupling $V$.
Clearly, as $V$ approaches zero, $H_{V,\omega}$ approaches the
free Schr\"odinger operator
$$
[H_0 \psi] (n) = \psi(n+1) + \psi(n-1),
$$
which is a well-studied object whose spectral properties are
completely understood. In particular, the spectrum of $H_0$ is
given by the interval $[-2,2]$. It is natural to ask which
spectral features of $H_{V,\omega}$ approach those of $H_0$. It
follows from S\"ut\H{o}'s 1989 result \cite{S89} that the Lebesgue measure of the
spectrum does not extend continuously to the case $V=0$. Given
this situation, one would at least hope that the dimension of the
spectrum is continuous at $V = 0$.

It was shown by us in \cite{DG09a} (and independently by Cantat \cite{Can}) that $\Sigma_V$ is a dynamically
defined Cantor set for $V>0$ sufficiently small (i.e., the small
coupling counterpart to Casdagli's result at large coupling). A
consequence of this is the equality of Hausdorff dimension and
upper and lower box counting dimensions of $\Sigma_V$ in this
coupling constant regime.  Our first result shows that the dimension of the
spectrum indeed extends continuously to $V=0$.

\begin{Thm}\label{t.1}
We have
$$
\lim_{V \to 0} \dim \Sigma_V = 1.
$$
More precisely, there are constants $C_1, C_2 > 0$ such that
$$
1 - C_1 V \le \dim \Sigma_V \le 1 - C_2 V
$$
for $V > 0$ sufficiently small.
\end{Thm}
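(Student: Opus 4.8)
The plan is to study the spectrum $\Sigma_V$ via the trace map dynamics and obtain quantitative control of the Cantor set as $V \to 0$. Recall that S\"ut\H{o}'s transfer matrix approach associates to $H_{V,\omega}$ the sequence of traces $x_k = \tfrac12 \operatorname{tr} M_k(E)$ of the periodic-approximant transfer matrices over Fibonacci-length blocks; these satisfy the trace map recursion $x_{k+1} = 2 x_k x_{k-1} - x_{k-2}$ and preserve the Fricke–Vinberg invariant $I(x_{-1},x_0,x_1) = x_{-1}^2 + x_0^2 + x_1^2 - 2 x_{-1} x_0 x_1 - 1$, which for the Fibonacci Hamiltonian with coupling $V$ equals the constant $V^2/4$ along the orbit. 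The energy $E$ lies in $\Sigma_V$ precisely when the corresponding trace orbit stays bounded, and $\Sigma_V$ is realized as the set of such $E$; by the work cited (\cite{S87,Cas,DG09a,Can}), for small $V$ this set is a dynamically defined Cantor set associated with the hyperbolic dynamics of the trace map restricted to the invariant surface $S_V = \{I = V^2/4\}$. The strategy is to estimate the Hausdorff dimension of this Cantor set by analyzing how the hyperbolicity of the trace map degenerates as $V \to 0$: when $V = 0$, the surface $S_0$ contains the interval $[-2,2]$ (the free spectrum) as part of a non-hyperbolic/normally hyperbolic invariant set, and $\Sigma_V$ is a perturbation of this.

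\medskip

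First I would set up the explicit conjugacy (following S\"ut\H{o} and \cite{DG09a}) between $\Sigma_V$ and the stable set / nonwandering structure of the trace map $T$ on $S_V$, and identify the relevant periodic orbits of $T$ — in particular the period-$6$ (or period-$2$ after symmetry reduction) orbit that governs the edges of the spectrum. Second, I would compute the eigenvalues of $DT$ along these orbits as functions of $V$: the key point is that at $V = 0$ the relevant fixed/periodic points are \emph{partially hyperbolic} with a neutral direction along the line $[-2,2]$, and turning on $V > 0$ makes the dynamics hyperbolic but with a Lyapunov exponent (in the formerly neutral direction) that vanishes as $V \to 0$ — one expects it to behave like $\exp(\text{const}\cdot V)$, i.e. the expansion rate is $1 + O(V)$. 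Third, I would use the distortion-control / bounded-geometry estimates for dynamically defined Cantor sets (thermodynamic formalism, or the thickness–denseness bounds of the Palis–Takens type \cite{PT}) to convert these Lyapunov exponent estimates into bounds on $\dim \Sigma_V$: schematically, if the contraction and expansion rates along orbits of length $n$ are comparable to $\lambda^{\pm cVn}$ with bounded distortion, the pressure equation $P(-s \log|DT|) = 0$ forces $1 - s \asymp V$, giving $1 - C_1 V \le \dim \Sigma_V \le 1 - C_2 V$. The uniformity of the distortion bounds as $V \to 0$ is what must be checked carefully, and this is where the normally hyperbolic theory (persistence of the invariant manifolds and their smooth dependence on $V$) enters.

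\medskip

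The main obstacle will be \textbf{uniform hyperbolicity and distortion control as $V \to 0$}. The Cantor set $\Sigma_V$ is hyperbolic for each fixed small $V$, but its hyperbolicity constants degenerate in the limit — precisely the phenomenon that produces $\dim \Sigma_V \to 1$ — so one cannot simply invoke a fixed-hyperbolicity result. I would need to rescale coordinates near the curve $[-2,2] \subset S_V$ (introducing a blow-up in the neutral direction, or using the arc-length parametrization of the would-be neutral circle) so that after rescaling the dynamics becomes uniformly hyperbolic with constants bounded away from $1$ and $\infty$ uniformly in $V$, and then track how this rescaling distorts the dimension computation — the rescaling factor in the neutral direction, of order $V$, is exactly what produces the linear rate. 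Establishing that the induced map on the rescaled Cantor set has bounded distortion uniformly in $V$ (so that the bounded distortion principle applies with $V$-independent constants) is the crux; the upper bound $\dim \Sigma_V \le 1 - C_2 V$ is likely the more delicate of the two, requiring a lower bound on the expansion rate along \emph{every} orbit, not just periodic ones, which I would get from the explicit form of the trace map plus a compactness/continuity argument on $S_V$.

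\medskip

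An alternative, more hands-on route avoids abstract thermodynamic formalism: use S\"ut\H{o}'s combinatorial description of the band structure of the periodic approximants $H_{V,\omega}^{(k)}$ (the period-$F_k$ operators), for which the spectrum is an explicit union of bands whose edges are governed by $|x_k(E)| = 1$ and $|x_{k\pm1}(E)| \le 1$. One estimates the band widths and the gap widths at level $k$ as functions of $V$ and $k$ using the invariant $V^2/4 = x_{k-1}^2 + x_k^2 + x_{k+1}^2 - 2x_{k-1}x_kx_{k+1} - 1$, then feeds these into the natural covering of $\Sigma_V = \bigcap_k \sigma(H^{(k)}_{V,\omega})$ to bound the box-counting dimension above, and uses a mass-distribution / Frostman argument on the bands to bound the Hausdorff dimension below. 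Either way the linear-in-$V$ rate comes from the same source: the $O(V^2)$ value of the invariant together with the $O(V)$ size of the expansion/contraction imbalance near the formerly neutral set. I would pursue the dynamical-systems route as primary since it meshes with the normally hyperbolic framework flagged in the introduction, and keep the combinatorial estimates in reserve for the explicit constants $C_1, C_2$.
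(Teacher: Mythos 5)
Your general framework (dynamically defined Cantor set via the trace map, Bowen's pressure formula, uniform distortion control as the crux) is the right skeleton, but the mechanism you propose for $\dim\Sigma_V\to 1$ is wrong. You assert that the Lyapunov exponent of the trace map on $\Omega_V$ ``vanishes as $V\to 0$'' and that the expansion rate is $1+O(V)$. This is false: at $V=0$ the restriction of $T$ to the bounded sphere $\mathbb{S}\subset S_0$ is already pseudo-Anosov, a factor of $\mathcal{A}=\begin{pmatrix}1&1\\1&0\end{pmatrix}$ on $\T^2$ with expansion $\mu=\frac{1+\sqrt5}{2}$, and this hyperbolicity persists with uniform constants for small $V$; the neutral direction you are picturing (tangent to the normally hyperbolic curve $\mathrm{Per}_2(T)$) is \emph{normal} to the invariant surface $S_V$, not an intrinsic degeneration of the surface dynamics. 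Your heuristic is also internally inconsistent: feeding ``$k$ branches, each with contraction ratio $\approx 1-cV$'' into Bowen's equation $\sum r_i^s=1$ gives $s\approx\log k/(cV)\to\infty$, not $s\to 1$. What actually drives the dimension up is geometric, not a softening of hyperbolicity: as $V\to 0$ each singular point $P_i$ splits into a pair of periodic points at distance $O(V)$, the gaps in $\ell_V\cap W^s(\Omega_V)$ sit between the strong-stable manifolds of these paired orbits, and they have size $O(V)$ relative to the adjacent bridges (linear gap opening, Theorem~\ref{t.3}). In thermodynamic language the escape pressure $-P(-\log|DT^u|)$ is $\asymp V$ while the Lyapunov exponent stays $\asymp\log\mu$, and the relation $1-\dim\approx(\text{escape rate})/\lambda_{\mathrm{Lyap}}$ puts the \emph{nonvanishing} Lyapunov exponent in the denominator; it never becomes small.

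The paper does not run the pressure route at all: it deduces Theorem~\ref{t.1} from $\tau(\Sigma_V)\asymp\theta(\Sigma_V)\asymp V^{-1}$ (Theorem~\ref{t.2}) via the Palis--Takens inequalities
$\frac{\log 2}{\log(2+1/\tau(C))}\le\dim_H C\le\frac{\log 2}{\log(2+1/\theta(C))}$,
which you mention only in passing. The hard part you correctly flag --- distortion control uniform as $V\to 0$, despite orbits spending arbitrarily long excursions near the singularities --- is exactly Proposition~\ref{p.distortion}, established through the normal form of Subsection~\ref{s.s04}, the excursion estimates of Propositions~\ref{p.dist}--\ref{p.distances}, and a dynamically adapted ordering of gaps (Subsection~\ref{ss.ordering}). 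Once you replace ``degenerating Lyapunov exponent'' by ``$O(V)$ gap-to-bridge ratio with $V$-uniform bounded distortion,'' your outline becomes, in substance, the paper's argument (with the cosmetic difference of thickness versus pressure); that replacement is the essential missing idea, and without it the linear rate $1-\dim\Sigma_V\asymp V$ does not emerge from your setup.
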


We get Theorem~\ref{t.1} as a consequence of a connection between the
Hausdorff dimension of a Cantor set and its denseness and
thickness, along with estimates for the latter quantities. Since
these notions and connections may be less familiar to at least a
part of our intended audience, let us recall the definitions and
some of the main results; an excellent general reference in this
context is \cite{PT}.

Let $C \subset \mathbb{R}$ be a Cantor set and denote by $I$ its
convex hull. Any connected component of $I\backslash C$ is called a
\emph{gap} of $C$. A \emph{presentation} of $C$ is given by an
ordering $\mathcal{U} = \{U_n\}_{n \ge 1}$ of the gaps of $C$. If $u
\in C$ is a boundary point of a gap $U$ of $C$, we denote by $K$ the
connected component of $I\backslash (U_1\cup U_2\cup\ldots \cup
U_n)$ (with $n$ chosen so that $U_n = U$) that contains $u$ and
write
$$
\tau(C, \mathcal{U}, u)=\frac{|K|}{|U|}.
$$

With this notation, the \emph{thickness} $\tau(C)$ and the
\emph{denseness} $\theta(C)$ of $C$ are given by
$$
\tau(C) = \sup_{\mathcal{U}} \inf_{u} \tau(C, \mathcal{U}, u),
\qquad \theta(C) = \inf_{\mathcal{U}} \sup_{u} \tau(C,
\mathcal{U}, u),
$$
and they are related to the Hausdorff dimension of $C$ by the
following inequalities (cf.~\cite[Section~4.2]{PT}),
$$
\frac{\log
2}{\log(2+ \frac{1}{\tau(C)})}\le \dim_\mathrm{H} C \le
\frac{\log 2}{\log(2 + \frac{1}{\theta(C)})}.
$$

Due to these inequalities, Theorem \ref{t.1} is a consequence of the following result:

\begin{Thm}\label{t.2}
We have
$$
\lim_{V \to 0}\tau(\Sigma_V) = \infty.
$$
More precisely, there are constants $C_3, C_4 > 0$ such that
$$
C_3 V^{-1} \le \tau(\Sigma_V)\le \theta(\Sigma_V) \le  C_4 V^{-1}
$$
for $V > 0$ sufficiently small.
\end{Thm}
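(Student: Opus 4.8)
The plan is to exploit the realization of $\Sigma_V$ as a dynamically defined Cantor set from \cite{DG09a} (see also \cite{Can}). Let $I(x,y,z)=x^{2}+y^{2}+z^{2}-2xyz-1$ be the Fricke--Vogt invariant, $T(x,y,z)=(2xy-z,x,y)$ the trace map, and $\ell_V(E)=\bigl(\tfrac{E-V}{2},\tfrac{E}{2},1\bigr)$, which lies on the $T$-invariant surface $S_V=\{I=V^{2}/4\}$. By S\"ut\H{o}'s trace--map formalism \cite{S87}, $E\in\Sigma_V$ precisely when the forward $T$-orbit of $\ell_V(E)$ is bounded, and for small $V>0$ this happens precisely when $\ell_V(E)\in W^{s}(\Omega_V)$, where $\Omega_V\subset S_V$ is the locally maximal hyperbolic set of bounded orbits of $T|_{S_V}$; its Markov partition makes $\Sigma_V=\{E:\ell_V(E)\in W^{s}(\Omega_V)\}$ a dynamically defined Cantor set with a smooth defining expanding map of bounded distortion. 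I would then fix one presentation $\mathcal U_\ast$ of $\Sigma_V$ — delete gaps in increasing order of the generation at which they are created — and prove that, for $\mathcal U_\ast$, \emph{every} local ratio $\tau(\Sigma_V,\mathcal U_\ast,u)=|K|/|U|$ is comparable to $1/V$. Since a gap $U$ of generation $m$ is born inside a bridge of an earlier generation and deleting the lower-generation gaps changes adjacent bridge lengths only by a factor $1+O(V)$, this reduces to a two-sided one-step estimate: there are $V$-independent $c_1,c_2>0$ with $c_1 V\,|K|\le|U|\le c_2 V\,|K|$ for every construction gap $U$ and the bridge $K$ it splits off. Granting it, this single presentation yields both $\tau(\Sigma_V)=\sup_{\mathcal U}\inf_u\ge C_3/V$ and $\theta(\Sigma_V)=\inf_{\mathcal U}\sup_u\le C_4/V$ (with $C_3,C_4$ built from $c_1,c_2$), and the remaining inequality $\tau\le\theta$ is general \cite[Section~4.2]{PT}.

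The one-step estimate is best seen through the band structure $\Sigma_V=\bigcap_n(\sigma_n\cup\sigma_{n+1})$, where $\sigma_n$ is the spectrum of the $F_n$-periodic approximant and $x_n(E)$, half the trace of the transfer matrix over $F_n$ sites, satisfies $x_{n+1}=2x_nx_{n-1}-x_{n-2}$: a band of $\sigma_n$ splits, one or two generations later, into bands of $\sigma_{n+2}$ separated by gaps, and one must show these gaps have length $\asymp V$ times the parent band length. For bounded $n$ the $x_n$ are explicit polynomials and the mechanism is transparent at $V=0$: then $S_0$ is the Cayley cubic, parametrized on the relevant component by $(a,b)\mapsto(\cos a,\cos b,\cos(a-b))$, in which coordinates $T$ is the linear Fibonacci automorphism $(a,b)\mapsto(a+b,a)$ with eigenvalues $\tfrac{1\pm\sqrt5}{2}$; the point $\ell_0(E)=(\tfrac E2,\tfrac E2,1)$ sits on the diagonal $a=b$, all of whose orbits are bounded, so $\Sigma_0=[-2,2]$ has no gaps at all. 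A first-order perturbation computation then shows that as $V$ grows from $0$ each gap opens with length $V\rho_n(E)+O(V^{2})$, where $\rho_n$ is a ratio of derivatives of $x_n$ which, by bounded distortion of $x_n$ on its bands, is comparable above and below to the adjacent bridge length. Combined with the self-similarity of the construction and bounded distortion of the iterates of $T$ along $\Omega_V$, this propagates the estimate to all generations — \emph{provided the distortion bounds are uniform in} $V$.

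That uniformity is the crux and the main obstacle. For each fixed $V>0$, $\Omega_V$ is a horseshoe with bounded distortion, but as $V\to0$ the structure degenerates: $\Omega_V$ spreads over the entire two-dimensional invariant region of $S_0$, and $S_V$ develops, near each of the four conical singularities $(1,1,1),(1,-1,-1),(-1,1,-1),(-1,-1,1)$ of $S_0$ (which as orbits of $T$ form a fixed point and a $3$-cycle), a thin ``neck'' of diameter $\sim V$ through which the stable and unstable laminations of $\Omega_V$ must thread. A naive appeal to hyperbolic theory then produces distortion bounds that blow up as $V\to0$. The point — and where the normally hyperbolic framework of \cite{DG09a} enters — is that these four points, though singular for $S_0$, are \emph{nondegenerate} hyperbolic points of $T$ acting on $\R^{3}$ (for instance $DT$ at $(1,1,1)$ has eigenvalues $\tfrac{3\pm\sqrt5}{2}$ and $-1$), so near each of them one has, \emph{uniformly in $V$ and including $V=0$}, a smooth normal form in which $S_V$ appears as an explicit quadric and the relevant return map is essentially linear up to controlled higher-order terms. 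Carrying out the passage of $W^{s,u}(\Omega_V)$ through these necks in the normal form yields the $V$-uniform distortion bounds, hence the one-step estimate at all scales; the remainder is bookkeeping with the combinatorics of the band structure.

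Finally, Theorem~\ref{t.1} drops out of Theorem~\ref{t.2} via the displayed inequalities: $\tau(\Sigma_V)\ge C_3/V$ gives $\dim_{\mathrm H}\Sigma_V\ge\frac{\log 2}{\log(2+V/C_3)}=1-O(V)$, and $\theta(\Sigma_V)\le C_4/V$ gives $\dim_{\mathrm H}\Sigma_V\le\frac{\log 2}{\log(2+V/C_4)}=1-O(V)$.
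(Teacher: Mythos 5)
Your proposal follows essentially the same route as the paper: order the gaps by the generation (iteration count) at which the Markov partition creates them, observe that a newly born gap has length of order $V$ relative to its adjacent bridge (in the paper this comes from the splitting of each singularity into two periodic points at distance $\asymp V$, rather than from a band-structure perturbation calculation, but these are two descriptions of the same mechanism), and propagate this to all scales via a $V$-uniform bounded-distortion estimate that requires a normal-form analysis near the four conical singularities using the normal hyperbolicity of the period-$2$ curve. You correctly isolate the $V$-uniformity of the distortion bound as the crux; this is exactly the content of Proposition~\ref{p.distortion}, whose proof via the singular normal-form estimates (Propositions~\ref{p.coneinvariance}--\ref{p.distances} and Lemma~\ref{l.prelim}) occupies the bulk of Section~\ref{s.saas} and is where the real work — glossed over in your last paragraph as ``carrying out the passage through the necks'' — actually lies.
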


Bovier and Ghez described in their 1995 paper \cite{BG} the
then-state of the art concerning mathematically rigorous results
for Schr\"odinger operators in $\ell^2(\Z)$ with potentials
generated by primitive substitutions. The Fibonacci Hamiltonian
belongs to this class; more precisely, it is in many ways the most
important example within this class of models. One of the most
spectacular discoveries is that, in this class of models, the
spectrum jumps from being an interval for coupling $V = 0$ to
being a zero-measure Cantor set for coupling $V > 0$. That is, as
the potential is turned on, a dense set of gaps opens immediately
(and the complement of these gaps has zero Lebesgue measure). It
is natural to ask about the size of these gaps, which can in fact
be parametrized by a canonical countable set of gap labels; see
\cite{BBG92}. 
These gap openings were studied
in \cite{B} for a Thue-Morse potential and in \cite{BBG91} for period doubling potential. However, for the important Fibonacci
case, the problem remained open. In fact, Bovier and Ghez write on
p.~2321 of \cite{BG}: \textit{It is a quite perplexing feature
that even in the simplest case of all, the golden Fibonacci
sequence, the opening of the gaps at small coupling is not known!}\footnote{There is a perturbative approach to this problem for a class of
models that includes the Fibonacci Hamiltonian by Sire and
Mosseri; see \cite{SM89} and \cite{OK, Si89, SM90, SMS} for
related work. While their work is non-rigorous, it gives quite
convincing arguments in favor of linear gap opening; see
especially \cite[Section~5]{SM89}. It would be interesting to make
their approach mathematically rigorous.}

Our next result resolves this issue completely and shows that, in
the Fibonacci case, all gaps open linearly:

\begin{Thm}\label{t.3}
For $V > 0$ sufficiently small, the boundary points of a gap in
the spectrum $\Sigma_V$ depend smoothly on the coupling constant
$V$. Moreover, given any one-parameter continuous family $\{U_V\}_{V >
0}$ of gaps of $\Sigma_V$, we have that
$$
\lim_{V\to 0} \frac{|U_V|}{|V|}
$$
exists and belongs to $(0,\infty)$.
\end{Thm}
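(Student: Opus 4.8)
The plan is to analyze the trace map dynamics and use the hyperbolicity structure established in \cite{DG09a}. Recall that the key tool for the Fibonacci Hamiltonian is the trace map $T : \R^3 \to \R^3$ given by $T(x,y,z) = (2xy - z, x, y)$, which has the Fricke-Vogt invariant $I(x,y,z) = x^2 + y^2 + z^2 - 2xyz - 1$ preserved under $T$. For coupling $V$, the spectrum $\Sigma_V$ corresponds to the set of energies $E$ for which the forward orbit of the initial point $\ell_V(E) = \left( \tfrac{E-V}{2}, \tfrac{E}{2}, 1 \right)$ under $T$ remains bounded; this line of initial conditions lies on the surface $S_V = \{ I = V^2/4 \}$. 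For $V > 0$ small, the nonwandering set of $T|_{S_V}$ is a locally maximal hyperbolic set $\Omega_V$, and $\Sigma_V$ is obtained by intersecting the line $\ell_V$ with the stable lamination of $\Omega_V$; moreover, as $V \to 0$, the set $\Omega_V$ converges to the nonwandering set $\Omega_0$ on the surface $S_0 = \{I = 0\}$, which is the Cayley cubic, and on which $T$ is still hyperbolic (this is the content of the analysis in \cite{DG09a}). Gaps of $\Sigma_V$ correspond to the ``holes'' between consecutive pieces of this Cantor set, and by the gap-labeling theorem each gap persists as $V$ varies (it cannot close for $V > 0$).

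The main steps I would carry out are as follows. First, fix a gap label and identify the corresponding gap $U_V$ for small $V > 0$; its two boundary points $E^\pm(V)$ are each determined by a transversal intersection of the line $\ell_V$ with a specific local stable manifold of a periodic point (or, more precisely, of the stable lamination, hitting the boundary of a gap of the Cantor set transverse structure) of the hyperbolic set $\Omega_V$. Second, invoke persistence and smoothness of hyperbolic sets under the smooth family of perturbations $V \mapsto T|_{S_V}$: the periodic orbits of $\Omega_V$ and their local stable/unstable manifolds depend smoothly (indeed analytically, since $T$ and the family of surfaces are polynomial) on $V$. Since the intersection of $\ell_V$ with the relevant stable manifold is transversal (transversality is part of the hyperbolicity picture and is exactly why the Cantor set is ``dynamically defined''), the implicit function theorem gives that $E^\pm(V)$ depend smoothly on $V$; this yields the first assertion of the theorem. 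Third, to get the linear rate, I would pass to the limit $V = 0$: at $V = 0$, both boundary points of the gap collapse to a single point $E^*$ of $\Sigma_0 = [-2,2]$ (since $\Sigma_0$ has no gaps), so $E^+(V) - E^-(V) \to 0$, and $|U_V|/V \to \tfrac{d}{dV}(E^+ - E^-)|_{V=0}$, the derivative of a smooth function. The work is then to show this derivative is \emph{nonzero} — equivalently, that $E^+$ and $E^-$ leave $E^*$ at different first-order rates. For this I would compute, at $V = 0$, the two one-sided derivatives using the explicit parametrization of $\Omega_0$ on the Cayley cubic and the explicit action of the linearized trace map there; the relevant periodic points of $\Omega_0$ are $T$-periodic points lying in $\{|x| \le 1, |y| \le 1, |z| \le 1\}$, and one knows their stable directions explicitly, so the derivative of the intersection point can be written as a convergent series / a concrete quantity whose sign one controls. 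The fact that the two boundary points correspond to genuinely different pieces of the Cantor structure (they are the two endpoints of distinct ``bridges'' abutting the gap) is what forces the two derivatives to differ, hence $\lim |U_V|/V \in (0,\infty)$.

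The main obstacle I expect is the last point: proving that the linear coefficient $\lim_{V \to 0} |U_V|/|V|$ is strictly positive rather than merely nonnegative. Smoothness of the boundary points and the existence of the limit follow rather cleanly from the hyperbolic continuation machinery plus transversality, but ruling out a ``higher-order'' (super-linear) gap opening requires a genuine lower bound — one must show that the velocity vector $\tfrac{d}{dV}\ell_V(E)|_{V=0} = (-\tfrac12, 0, 0)$ (together with the $E$-dependence) is not tangent, to first order, to the stable lamination in a way that would make the two boundary velocities coincide. Concretely, I would reduce this to showing that a certain explicitly computable determinant — built from the derivative of the initial condition with respect to $(E,V)$ and the stable/unstable directions of the relevant periodic point of $\Omega_0$ — is nonzero. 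One can also argue more softly: if the gap opened super-linearly for some label, a scaling/renormalization argument using the self-similar structure of the trace map (conjugacy of the dynamics near different periodic orbits) would propagate this to a whole family of gaps and ultimately contradict the thickness estimate $\tau(\Sigma_V) \le C_4 V^{-1}$ from Theorem~\ref{t.2}, since super-linear closing of gaps near a point would force the thickness to blow up faster than $V^{-1}$ there. Either route — direct computation on the Cayley cubic, or the soft contradiction with Theorem~\ref{t.2} — should close the argument; I would attempt the direct computation first since it also pins down the value of the limit.
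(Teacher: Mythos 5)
Your proposal correctly identifies the overall skeleton (hyperbolic continuation, transversality of $\ell_V$ to the stable lamination), but it misses the central mechanism, and the part you flag as the main obstacle is exactly where your plan would fail.

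The gaps of $\Sigma_V$ do not open at ``ordinary'' periodic points of $\Omega_V$ that continue smoothly to periodic points of $\Omega_0$. They open at the four singularities $P_1,\dots,P_4$ of the Cayley cubic $S_0$, precisely the places where the $V=0$ dynamics is a pseudo-Anosov map with conic singularities rather than a uniformly hyperbolic diffeomorphism. Your proposed computation --- ``compute the two one-sided derivatives using the explicit parametrization of $\Omega_0$ on the Cayley cubic and the explicit action of the linearized trace map there \dots\ one knows their stable directions explicitly'' --- cannot be carried out at $P_1$: there is no well-defined tangent plane to $S_0$ at $P_1$ and no stable direction of $T|_{S_0}$ there. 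Consequently the IFT/transversality argument you want to run degenerates exactly at the points that matter. Your fallback ``soft'' contradiction via the denseness estimate of Theorem~\ref{t.2} is also not adequate as stated: a single gap shrinking superlinearly with $V$ makes certain local bridge-to-gap ratios large, but neither the thickness (an $\inf$) nor the denseness (a $\sup$ over a \emph{chosen} ordering, with $K$ possibly a small bridge) is forced to blow up faster than $V^{-1}$ by one anomalous gap, so no immediate contradiction follows; worse, it would require using Theorem~\ref{t.2} inside the proof of Theorem~\ref{t.3}, which the paper deliberately avoids.

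What the paper actually does is replace the degenerating two-dimensional hyperbolic picture by a three-dimensional \emph{normally hyperbolic} one that survives the passage through $V=0$. The set $Per_2(T)$ is an explicit smooth curve in $\R^3$ through $P_1$ (Lemma~\ref{l.periodtwo}) and is normally hyperbolic for $T$; hence $W^s(\mathrm{Fix}(T^6,O_{r_0}(P_1)))$ is a smooth two-dimensional surface in $\R^3$, foliated $C^1$ by strong stable manifolds. The two period-two points $\mathbf p_V,\mathbf q_V$ on $S_V$ are the two intersection points $Per_2(T)\cap S_V$, and the explicit computation with the Fricke invariant (the function $f(t)$ with $f(0)=1$, $f'(0)=0$, $f''(0)>0$, set equal to $1+V^2/4$) gives $\mathrm{dist}(\mathbf p_V,\mathbf q_V)\asymp V$. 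The positivity and the very existence of the limit $\lim_{V\to 0}|U_V|/|V|$ then come from the geometric foliation Lemma~\ref{l.limit}, which compares the distance between $\mathbf p_V,\mathbf q_V$ along a transversal to the foliation with the distance between the intersections of $\ell_V$ with the corresponding leaves; that lemma is nontrivial precisely because the strong stable foliation is only $C^1$, so your implicit claim that smoothness of the boundary points immediately gives existence of the derivative is unjustified. In short, the missing idea in your proposal is the normal hyperbolicity of $Per_2(T)$ in $\R^3$ together with the quadratic degeneracy of the Fricke invariant along it --- this is what makes the $\sim V$ separation explicit --- and the $C^1$-foliation comparison lemma, which is what actually produces the finite nonzero limit.
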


Figure~\ref{fig:specplot20} shows a plot of the spectrum for small coupling:

\begin{figure}[htp]
\includegraphics[width=1\textwidth]{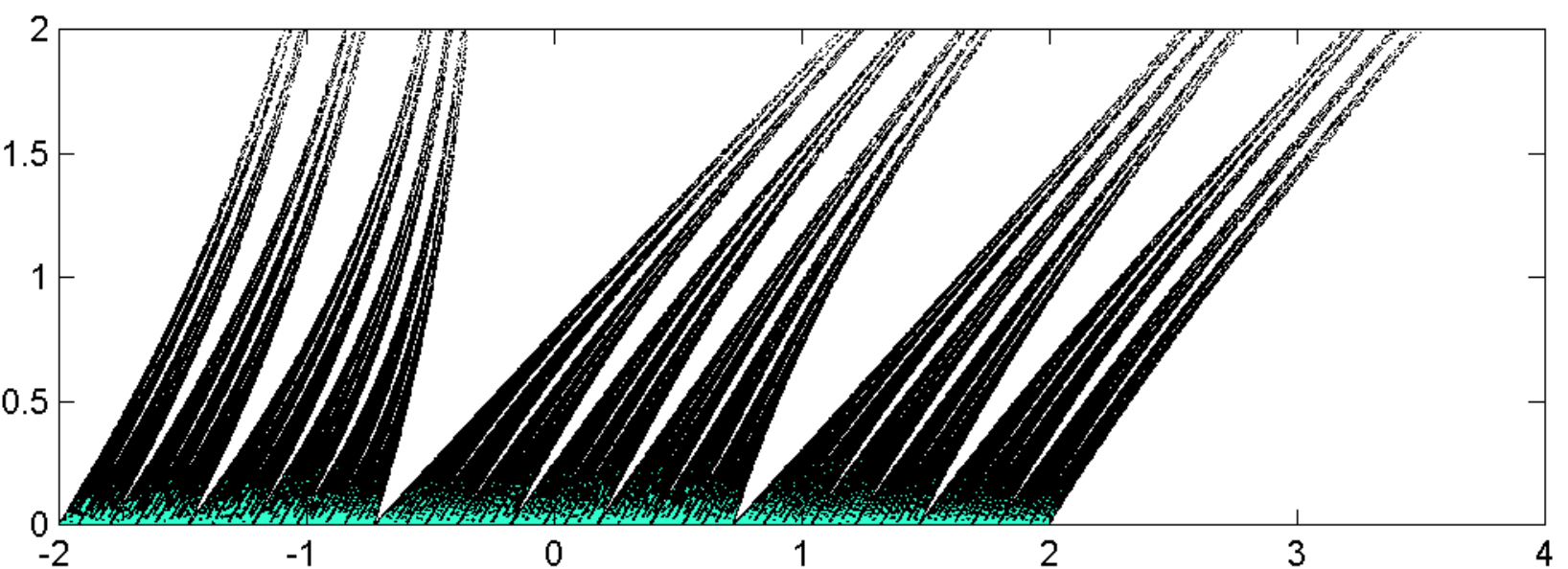} \par
\caption{The set $\{ (E,V) : E \in \Sigma_V , \; 0 \le V \le 2 \}$.} \label{fig:specplot20}
\end{figure}

The plot illustrates the results contained in Theorems~\ref{t.1}--\ref{t.3}. It also suggests that the limit
$\lim_{V\to 0}\frac{|U_V|}{|V|}$ depends on the chosen
family of gaps. We have more to say about the value of the limit
in Theorem~\ref{t.gaplimit} below. It will turn out that its size
is related to the label assigned to it by the gap labeling
theorem.

Our next result concerns the sum set
$$
\Sigma_V + \Sigma_V = \{ E_1 + E_2 : E_1 , E_2 \in \Sigma_V\}.
$$
This set is equal to the spectrum of the so-called square Fibonacci Hamiltonian. Here, one
considers the Schr\"odinger operator
\begin{align*}
[H^{(2)}_V \psi] (m,n) = & \psi(m+1,n) + \psi(m-1,n) + \psi(m,n+1)
+ \psi(m,n-1) + \\
& + V \left( \chi_{[1-\alpha , 1)}(m\alpha \!\!\! \mod 1) +
\chi_{[1-\alpha , 1)}(n\alpha \!\!\! \mod 1) \right) \psi(m,n)
\end{align*}
in $\ell^2(\Z^2)$. The theory of tensor products of Hilbert spaces
and operators then implies that $\sigma(H^{(2)}_V) = \Sigma_V +
\Sigma_V$, see Section \ref{s.chdm}. This operator and its spectrum have been studied
numerically and heuristically by Even-Dar Mandel and Lifshitz in a
series of papers \cite{EL06, EL07, EL08} (a similar model was studied by Sire in \cite{Si89}). Their study suggested that at small coupling, the spectrum of $\Sigma_V + \Sigma_V$ is \text{not} a Cantor set; quite on the contrary, it has no gaps at
all.

Our next theorem confirms this observation:

\begin{Thm}\label{t.4}
For $V > 0$ sufficiently small, we have that $\sigma(H^{(2)}_V) =
\Sigma_V + \Sigma_V$ is an interval.
\end{Thm}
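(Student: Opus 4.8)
The plan is to deduce Theorem~\ref{t.4} from Theorem~\ref{t.2} via a classical gap-lemma-type mechanism for sums of Cantor sets. Recall the Newhouse thickness criterion: if $C_1, C_2 \subset \R$ are Cantor sets with $\tau(C_1) \cdot \tau(C_2) > 1$, and neither set lies entirely in a gap of the other (a ``linking'' hypothesis), then $C_1 \cap C_2 \neq \emptyset$. The standard consequence for sum sets is that $C_1 + C_2 = \{x + y : x \in C_1, y \in C_2\}$ equals the interval $[\min C_1 + \min C_2, \max C_1 + \max C_2]$ provided $\tau(C_1)\cdot\tau(C_2) > 1$; one sees this by noting that $t \in C_1 + C_2$ iff $C_1 \cap (t - C_2) \neq \emptyset$, that $t - C_2$ is a Cantor set with the same thickness as $C_2$, and that as $t$ ranges over the relevant interval the linking hypothesis is satisfied.

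So the first step is to record and, if necessary, reprove the precise version of this ``sum of thick Cantor sets is an interval'' statement that I need — being careful about the boundary/linking condition, since the naive thickness criterion needs the two Cantor sets to be interleaved rather than one sitting inside a single gap of the other. The cleanest route is: for $t$ in the open interval $(\min C_1 + \min C_2, \max C_1 + \max C_2)$, check that $C_1$ and $t - C_2$ are linked (the convex hull of each contains a point of the other), invoke the Gap Lemma to get nonempty intersection, and handle the two endpoints trivially; this shows $C_1 + C_2$ is exactly the closed interval. I would cite \cite{PT} (Section~4.2) for the thickness--intersection machinery rather than reprove it.

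The second step is simply to apply this with $C_1 = C_2 = \Sigma_V$. By Theorem~\ref{t.2}, $\tau(\Sigma_V) \ge C_3 V^{-1} \to \infty$ as $V \to 0$, so for $V$ sufficiently small we have $\tau(\Sigma_V)^2 \ge C_3^2 V^{-2} > 1$. Hence the sum-set criterion applies and $\Sigma_V + \Sigma_V$ is the interval $[2 \min \Sigma_V, \, 2 \max \Sigma_V]$. Combined with the tensor-product identity $\sigma(H^{(2)}_V) = \Sigma_V + \Sigma_V$ (to be established in Section~\ref{s.chdm} from the standard fact that the spectrum of a sum of commuting operators acting on tensor factors is the sumset of the spectra), this gives Theorem~\ref{t.4}.

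The main obstacle is not really in this deduction — which is short once Theorem~\ref{t.2} is in hand — but rather lies upstream: Theorem~\ref{t.2} itself, i.e.\ the lower bound $\tau(\Sigma_V) \ge C_3 V^{-1}$, is the substantive input and presumably requires the dynamical (hyperbolic/normally hyperbolic) analysis of the trace map advertised in the introduction. Within the present argument, the one point that needs genuine care is the linking/boundary hypothesis in the Gap Lemma: one must verify that for every $t$ strictly between the endpoints, $\Sigma_V$ and $t - \Sigma_V$ are actually interlocked and not in a ``one inside a gap of the other'' configuration, so that the thickness bound is applicable. Since $\Sigma_V$ has small gaps (consistent with large thickness) and its diameter is bounded below uniformly for small $V$ (it contains a neighborhood of the edges of $[-2,2]$ perturbed slightly), this interlocking is easy to check, but it should be stated explicitly rather than glossed over.
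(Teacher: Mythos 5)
Your proposal is correct and follows essentially the same route as the paper: the paper reduces Theorem~\ref{t.4} to the tensor-product identity $\sigma(H^{(2)}_V) = \Sigma_V + \Sigma_V$, the thickness estimate $\tau(\Sigma_V)\to\infty$ from Theorem~\ref{t.2}, and a sum-set lemma (Lemma~\ref{p.sumofcs}) proved via the observation that $x\in C+K$ iff $C\cap(x-K)\neq\emptyset$ together with the Gap Lemma. The linking condition you flag as needing care is exactly what the paper isolates as the hypothesis of Lemma~\ref{p.sumofcs} (largest gap of one set not exceeding the diameter of the other), which is automatic when $C=K=\Sigma_V$.
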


Certainly, the same statement holds for the cubic Fibonacci Hamiltonian (i.e., the analogously defined Schr\"odinger operator in $\ell^2(\Z^3)$ with spectrum $\Sigma_V + \Sigma_V+ \Sigma_V$).

Notice that Theorem \ref{t.4} is a consequence of Theorem
\ref{t.2} and the famous Gap Lemma, which was used by Newhouse to
construct persistent tangencies and generic diffeomorphisms with an
infinite number of attractors (the so-called ``Newhouse
phenomenon"), see Subsection \ref{ss.gaplemma} for details:

\begin{GL}[Newhouse \cite{N79, N70}]\label{t.gaplemma}
If $C_1, C_2\subset \mathbb{R}^1$ are Cantor sets such that
$$
\tau(C_1) \cdot \tau(C_2)>1,
$$
then either one of these sets is contained entirely in a
gap\footnote{For the purpose of this lemma, we also consider the
two unbounded gaps in addition to the bounded gaps considered
above, that is, the connected components of the complement of the
convex hull of the Cantor set in question.} of the other set, or
$C_1\cap C_2\ne \emptyset$.
\end{GL}

Let us turn to the formulation of results involving the integrated
density of states, which is a quantity of fundamental importance
associated with an ergodic family of Schr\"odinger operators. We
first recall the definition of the integrated density of states.
Denote the restriction of $H_{V , \omega}$ to some finite interval
$\Lambda \subset \Z$ with Dirichlet boundary conditions by $H_{V ,
\omega}^\Lambda$. We denote by $N(E,\omega,V,\Lambda)$ the number
of eigenvalues of $H_{V , \omega}^\Lambda$ that are less than or
equal $E$. The integrated density of states is given by
\begin{equation}\label{e.idsdef}
N(E,V) = \lim_{n \to \infty} \frac{1}{n} N(E, \omega , V , [1,n] ).
\end{equation}
We will comment on the existence of the limit and some of its
basic properties in Section~\ref{s.ids}. One of the most important
applications of the integrated density of states is the so-called
gap labeling. That is, one can identify a canonical set of gap
labels, that is only associated with the underlying dynamics (in
this case, an irrational rotation of the circle or the
shift-transformation on a substitution-generated subshift over two
symbols), in such a way that the value of $N(E,V)$ for $E \in \R
\setminus \Sigma_V$ must belong to this canonical set. In the
Fibonacci case, this set is well-known (see, e.g.,
\cite[Eq.~(6.7)]{BBG92}) and the general gap labeling theorem
specializes to the following statement:
\begin{equation}\label{f.fibgaplabels}
\{ N(E,V) : E \in \R \setminus \Sigma_V \} \subseteq \{ \{ m \alpha \} : m \in \Z \} \cup \{ 1 \}
\end{equation}
for every $V \not= 0$. Here $\{ m \alpha \}$ denotes the fractional part
of $m \alpha$, that is, $\{ m \alpha \} = m \alpha - \lfloor m \alpha \rfloor$.
Notice that the set of gap labels is indeed $V$-independent and
only depends on the value of $\alpha$ from the underlying circle
rotation. Since $\alpha$ is irrational, the set of gap labels is
dense. In general, a dense set of gap labels is indicative of a
Cantor spectrum and hence a common (and attractive) stronger
version of proving Cantor spectrum is to show that the operator
``has all its gaps open.'' For example, the Ten Martini Problem
for the almost Mathieu operator is to show Cantor spectrum, while
the Dry Ten Martini Problem is to show that all labels correspond
to gaps in the spectrum. The former problem has been completely
solved \cite{AJ}, while the latter has not yet been completely
settled. Indeed, it is in general a hard problem to show that all
labels given by the gap labeling theorem correspond to gaps and
there are only few results of this kind. Here we show the stronger
(or ``dry'') form of Cantor spectrum for the weakly coupled
Fibonacci Hamiltonian and establish complete gap labeling:

\begin{Thm}\label{t.completegaplabeling}
There is $V_0 > 0$ such that for every $V \in (0,V_0]$, all gaps allowed by the gap labeling theorem are open. That is,
\begin{equation}\label{f.completelabeling}
\{ N(E,V) : E \in \R \setminus \Sigma_V \} = \{ \{ m \alpha \} : m \in \Z \} \cup \{ 1 \}.
\end{equation}
\end{Thm}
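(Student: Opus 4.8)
\emph{Proof proposal.} The plan is to obtain the bounded gaps of $\Sigma_V$ from the trace map, to attach to each of them its gap label via the integrated density of states, and then — this is the decisive point — to show that \emph{all} of the countably many labels in \eqref{f.fibgaplabels} are realized by an open gap with one and the same coupling threshold $V_0$. Recall the trace map $T(x,y,z)=(2xy-z,x,y)$ preserving $I(x,y,z)=x^2+y^2+z^2-2xyz-1$; for coupling $V$ the dynamics lives on $S_V=\{I=V^2/4\}$, on which (for small $V>0$, by \cite{DG09a}) the non-wandering set $\Omega_V$ is a locally maximal horseshoe whose symbolic dynamics is governed by the Fibonacci substitution $a\mapsto ab,\ b\mapsto a$, and $\Sigma_V$ is the set of energies $E$ for which the forward $T$-orbit of the corresponding point of the line of initial conditions $\ell_V$ is bounded. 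In this picture the bounded gaps of $\Sigma_V$ are the ``holes'' cut by $\ell_V$ in the transverse Cantor structure of the stable lamination $W^s(\Omega_V)$; they are indexed by finite admissible words, hence organized into generations by word length. Since $N(\cdot,V)$ is continuous, non-decreasing, strictly increasing across $\Sigma_V$, and locally constant on gaps, distinct gaps carry distinct values $N(E,V)$, and by \eqref{f.fibgaplabels} these lie in $\{\{m\alpha\}:m\in\Z\}\cup\{1\}$. The two unbounded gaps account for $0$ and $1$, so \eqref{f.completelabeling} is exactly the statement that
\[
\{\text{bounded gaps of }\Sigma_V\}\ \longrightarrow\ \{\{m\alpha\}:m\in\Z\setminus\{0\}\}
\]
is onto for every $V\in(0,V_0]$.

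\emph{Opening one gap and naming it.} First I would fix $m\ne0$ and treat its label in isolation. At $V=0$ the surface $S_0$ is the singular Cayley cubic and the portion of $\ell_0$ over $[-2,2]$ consists of bounded orbits with no transverse Cantor structure, consistently with $\Sigma_0=[-2,2]$ and all gaps closed. Each prospective generation-$k$ gap degenerates, at $V=0$, to a single energy $E_m=2\cos(j\pi/F_k)$ — a (degenerate) band edge of $[-2,2]$ at which the half-trace $x_k(\cdot,0)=\cos(F_k\theta)$, $E=2\cos\theta$, attains $\pm1$ with definite concavity. Since the $x_k$ (equivalently the iterates $T^k(\ell_V)$) are polynomial in $(E,V)$, a non-degeneracy computation — the same one that underlies Theorem~\ref{t.3} — shows that for $0<V$ small a genuine gap $U_V^{(m)}$ of $\Sigma_V$ opens out of $E_m$, with $|U_V^{(m)}|/V$ bounded away from $0$ and $\infty$. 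Its label $N(U_V^{(m)},V)$ lies in $\{\{m\alpha\}\}\cup\{1\}$ and is pinned down, by a bookkeeping argument matching the generation index of the gap with the explicit label set of the period-$F_k$ and period-$F_{k+1}$ approximants (whose integrated densities of states converge uniformly to $N(\cdot,V)$), to be exactly $\{m\alpha\}$. Thus each individual label is attained once $V$ is small enough, but at this stage the threshold $V_0(m)$ may deteriorate as $|m|\to\infty$.

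\emph{Making $V_0$ uniform — the main obstacle.} Removing the $m$-dependence of $V_0$ is the heart of the matter, and it is where soft perturbation theory is insufficient: there are infinitely many gaps and they accumulate at the ``closed'' picture $V=0$. I would argue via two ingredients. (i) By the normally hyperbolic analysis of \cite{DG09a}, the horseshoe $\Omega_V\subset S_V$ carries stable/unstable cone fields and has expansion and contraction rates bounded below and above \emph{uniformly} for $V\in(0,V_0]$; hence the transverse Cantor structure of $W^s(\Omega_V)$, and therefore the gap pattern of $\Sigma_V$, has uniformly bounded distortion, so each combinatorially possible gap has relative size bounded below uniformly in $V\in(0,V_0]$, and in particular is open. (ii) The Fibonacci trace recursion is \emph{exactly self-similar}, so the family of generation-$(k+2)$ gaps is the image of the family of generation-$k$ gaps under a renormalization map which, by the uniform hyperbolic bounds of (i), changes lengths by a uniformly bounded factor; iterating (ii) reduces the openness of all high-generation gaps, uniformly in $V$, to the finitely many low-generation base cases of the previous paragraph, and these fix $V_0$ once and for all. (Consistency: Theorem~\ref{t.3}, giving smooth gap edges and linear opening, also guarantees that no gap degenerates again as $V$ runs over $(0,V_0]$.)

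I expect the technical core of the proof to be precisely this uniformity — establishing the renormalization of the gap combinatorics and controlling its distortion uniformly as $V\to0^+$ — which is exactly where the normally hyperbolic structure of the trace map at $V=0$, rather than mere hyperbolicity for each fixed $V>0$, is indispensable. Everything else (the $V=0$ Chebyshev computation, the identification of labels via periodic approximants, and the input from Theorems~\ref{t.2} and \ref{t.3}) is, by comparison, routine.
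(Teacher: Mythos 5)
Your proposal is essentially correct in the facts it relies on, but it takes a genuinely different — and considerably more roundabout — route than the paper, and the part you flag as the ``heart of the matter'' (making $V_0$ uniform in the label $m$) is a non-issue in the paper's argument. The paper (Theorem~\ref{t.completegaplabeling2} in Subsection~\ref{ss.cgl}) does not go through periodic approximants, band-edge degeneracies $2\cos(j\pi/F_k)$, or any renormalization/distortion scheme for the combinatorics of gaps. Instead it works on the torus factor: the four preimages $F^{-1}(P_i)$ of the trace-map singularities are periodic points of $\mathcal{A}$ on $\mathbb{T}^2$, and their stable manifolds meet the circle $\{\varphi=0\}$ precisely at the four arithmetic progressions $\{k\alpha\}$, $\{k\alpha+\tfrac12\}$, $\{k\alpha+\tfrac\alpha2\}$, $\{k\alpha+\tfrac12+\tfrac\alpha2\}$; pushing these through $F$ gives energies on $\ell_0$ at which the explicit free IDS of part~(e) takes \emph{exactly} the values $\{\{m\alpha\}:m\in\Z\}$. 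For $V>0$ each singularity splits into two hyperbolic periodic points whose strong stable manifolds flank each former singular stable leaf, every point of $\ell_V$ trapped between the flanking manifolds escapes, and the resulting interval is a gap. Joint continuity of $N(E,V)$ in $(E,V)$ and local constancy of $N(\cdot,V)$ off $\Sigma_V$ then pins the label of that gap to the $V=0$ value $\{m\alpha\}$.

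What this buys, compared with your route, is that the uniformity over $m$ is automatic rather than something to be engineered: every label $\{m\alpha\}$ is attached at $V=0$ to an intersection of the singularity stable lamination with $\ell_0$, and the \emph{same} geometric event — splitting of the four singularities — opens \emph{all} of these gaps simultaneously as soon as the transversality hypothesis of Theorem~\ref{t.completegaplabeling2} holds, which it does for $V\in(0,V_0]$ by \cite{DG09a}. No distortion bound, no self-similar renormalization of the gap hierarchy, and no bookkeeping over the $F_k$-periodic approximants is needed. Your proposal's second ingredient (renormalization plus uniform cone-field distortion) is essentially the machinery of the thickness estimate in Theorem~\ref{t.2}; it is used in the paper for that theorem, but the complete gap labeling is obtained by a much softer argument. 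Conversely, your step ``pinned down by a bookkeeping argument matching the generation index with the label set of the period-$F_k$ approximants'' is a real gap in your writeup: that matching is precisely the nontrivial identity that the paper obtains for free from the $F$-image of the $\mathcal{A}$-stable manifolds and the closed-form free IDS, and without it your proposal does not actually identify which label a given opened band edge carries.
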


Complete gap labeling for the strongly coupled Fibonacci
Hamiltonian was shown by Raymond in \cite{Ra}, where he proves
\eqref{f.completelabeling} for $V > 4$. We conjecture that
\eqref{f.completelabeling} holds for every $V > 0$.

Let us return to the existence of the limit in Theorem~\ref{t.3}.
As was pointed out there, the value of the limit will depend on
the family of gaps chosen. Now that the gap labeling has been
introduced, we can refine the statement. For $m \in \Z \setminus
\{ 0 \}$, denote by $U_m(V)$ the gap of $\Sigma_V$ where the
integrated density of states takes the value $\{ m \alpha \}$.

\begin{Thm}\label{t.gaplimit}
There is a finite constant $C^*$ such that for every $m \in \Z \setminus \{ 0 \}$, we have
$$
\lim_{V \to 0} \frac{|U_m(V)|}{|V|} = \frac{C_m}{|m|}
$$
for a suitable $C_m \in (0, C^*)$.
\end{Thm}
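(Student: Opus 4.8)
The plan is to derive Theorem~\ref{t.gaplimit} from the smooth dependence established in Theorem~\ref{t.3} together with a careful analysis of where the gap labeled $\{m\alpha\}$ sits in the trace map picture. Recall that the spectrum $\Sigma_V$ is the set of energies $E$ for which the trace map orbit (the iteration of the dynamics on the invariant surface arising from the transfer matrix cocycle over the Fibonacci substitution) stays bounded; equivalently, $\Sigma_V$ is an intersection of a nested sequence of ``bands'' $B_k(V)$ obtained from the periodic approximants, and the gaps of $\Sigma_V$ are the intervals that get removed at each stage. The gap $U_m(V)$ with label $\{m\alpha\}$ appears at a definite combinatorial level $k=k(m)$ of this construction, and its two endpoints are, for each small $V>0$, given by solutions of explicit trace conditions (e.g. $x_k(E,V)=\pm 1$ for the relevant coordinate $x_k$ of the trace map orbit). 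By Theorem~\ref{t.3} these endpoints, call them $a_m(V)$ and $b_m(V)$, are smooth in $V$ on $(0,V_0]$, and they both converge as $V\to 0$ to the same point $E_m\in[-2,2]$ which is the corresponding gap of the free periodic approximant --- in fact $E_m = 2\cos(\pi m\alpha)$ or a nearby band-edge of the $k(m)$-th free approximant.

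Next I would compute the first-order term of the Taylor expansion of $a_m(V)$ and $b_m(V)$ at $V=0$. Since $a_m(0)=b_m(0)=E_m$, we have $|U_m(V)| = |b_m(V)-a_m(V)| = |b_m'(0)-a_m'(0)|\,|V| + o(|V|)$, so $\lim_{V\to 0}|U_m(V)|/|V| = |b_m'(0)-a_m'(0)| =: C_m/|m|$, and the content of the theorem is (i) that this quantity is strictly positive --- which is exactly the linear gap opening already asserted in Theorem~\ref{t.3}, so $C_m>0$ for free --- and (ii) that $C_m \le C^*|m|$ uniformly in $m$, i.e. a uniform upper bound on $|m|\cdot|b_m'(0)-a_m'(0)|$. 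For (ii) the natural route is first-order (Rayleigh--Schr\"odinger) perturbation theory for the periodic operator: the endpoint $E_m$ is a band edge of the free Laplacian restricted to a period-$F_{k(m)}$ cell (where $F_k$ is the $k$-th Fibonacci number), with eigenfunction a plane wave $e^{\pm i\pi m\alpha n}$ or a standing wave; turning on the potential $V\chi$ shifts this edge, and the standard formula gives $a_m'(0)$, $b_m'(0)$ as diagonal matrix elements $\langle \psi_\pm, \chi\,\psi_\pm\rangle$ of the characteristic-function potential against the two edge eigenfunctions $\psi_\pm$. The difference of these two matrix elements is a finite trigonometric sum --- essentially a Fej\'er-type kernel sampled along the rotation by $\alpha$ --- whose absolute value one bounds by $\mathrm{const}/F_{k(m)}$, and since $F_{k(m)} \asymp 1/\mathrm{dist}(\{m\alpha\},0)\asymp |m|$ by the three-distance theorem and the Diophantine properties of the golden mean, this yields $|b_m'(0)-a_m'(0)| \le C^*/|m|$ as required. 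The finiteness of $C^*$ reduces to the uniform boundedness of these normalized trigonometric sums, which is a concrete Fourier-analytic estimate.

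The main obstacle I anticipate is making the identification ``gap with label $\{m\alpha\}$ $\leftrightarrow$ band edge of the $F_{k(m)}$-periodic free operator with the explicit eigenfunctions $\psi_\pm$'' both precise and uniform in $m$. One has to track, through the trace-map / band-structure bookkeeping, exactly which of the $F_k$ bands of the $k$-th periodic approximant is split when the $k$-th gap opens, show that its label is the predicted element $\{m\alpha\}$ of the gap-labeling set, and control the dependence of the level $k(m)$ on $m$ --- this is where the arithmetic of continued-fraction approximants of $\alpha$ enters, and where the transfer-matrix formalism of the earlier sections (and of \cite{DG09a}) must be invoked carefully so that all error terms in the perturbative expansion are $o(V)$ uniformly in $m$ for $m$ in the relevant range at each scale. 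Once that correspondence is nailed down, the perturbation-theoretic computation of $a_m'(0)$ and $b_m'(0)$ and the trigonometric-sum estimate are essentially routine, and Theorem~\ref{t.3} supplies both the existence of the limit and its positivity, leaving only the uniform upper bound $C_m < C^*$ to be extracted from the Fourier estimate.
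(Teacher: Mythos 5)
Your proposal takes a genuinely different route from the paper: you work with Rayleigh--Schr\"odinger degenerate perturbation theory at degenerate band edges of the $F_{k(m)}$-periodic free Laplacian, whereas the paper works entirely in the trace-map/torus picture. In the paper's proof, the $1/|m|$ scaling is extracted from the hyperbolic dynamics of $\mathcal{A}$ directly: one lets $M$ be the number of backward iterates needed so that $\mathcal{A}^{-M}(\Gamma)$ (the preimage of the relevant stable manifold) reaches the point $Q_m\in F^{-1}(\ell_0)$ carrying the label $\{m\alpha\}$, observes that the preimage must then have length $\gtrsim|m|$, concludes $\alpha^M\lesssim 1/|m|$, and finally reads off the gap length at scale $0$ from $|D\mathcal{A}^M|^{-1}\asymp\alpha^M$, using that $DF$ is bounded and that near the singularity the gap $I_V$ cut by the split stable manifolds has $|I_V|/V$ bounded above. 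No Fourier analysis or periodic-approximant spectral theory enters.

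There is, however, a genuine gap in your argument, and it is exactly the part you call your ``main obstacle'' and leave unsupplied. Two of its components are not routine. First, the identification of the endpoints of $U_m(V)$ with a pair of degenerate band edges of the $F_{k(m)}$-periodic approximant, \emph{uniformly in $m$ and in $V\in(0,V_0]$ with errors $o(V)$ uniform in $m$}, is not something that follows from S\"ut\H{o}'s description $\Sigma_V=\bigcap_k(\sigma_k\cup\sigma_{k+1})$ alone; it requires knowing that once a gap opens at a definite level its endpoints stabilize and vary smoothly, which in the paper is obtained from the trace-map hyperbolicity (stable manifolds of the period-$2$ and period-$6$ orbits near the singularities), not from the periodic-approximant hierarchy. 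Second, the assertions ``$|b_m'(0)-a_m'(0)|\le\mathrm{const}/F_{k(m)}$'' and ``$F_{k(m)}\asymp|m|$'' carry the whole weight of the $1/|m|$ bound, but you give no argument for either: the three-distance theorem controls the spacing of $\{n\alpha\}$, not the discrete Fourier coefficients of $\chi_{[1-\alpha,1)}$ over a period-$F_k$ window nor the level $k(m)$ at which the label $\{m\alpha\}$ first corresponds to an interior band-edge index $j=mF_{k-1}\bmod F_k\ne 0$. Establishing these combinatorial and Fourier-analytic facts rigorously and uniformly in $m$ is where the real work lies, and it is plausible that doing so would force you back into essentially the same hyperbolic-dynamics computation the paper performs (the expansion rate $\mu^M\gtrsim|m|$ is the dynamical incarnation of the Fourier decay you are hoping for). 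So the blueprint is reasonable but, as written, the crucial estimate is asserted rather than proved, which is why I would classify this as a gap rather than a complete alternative proof.
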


Our final set of results concerns the spectral measures and
transport exponents associated with the operator family. We will
give precise definitions and statements of our results in Section \ref{s.smte} and
limit ourselves to a brief description here. The ultimate goal of
any analysis of a given Schr\"odinger operator is always an
understanding of the associated unitary group, which then allows
one to understand the dynamics of the associated time-dependent
Schr\"odinger equation. The standard transport exponents capture
the spreading of the quantum state in space. Most approaches to a
study of these transport exponents proceed via (time-independent)
spectral theory and link continuity properties of the spectral
measure, associated to the initial state of the time evolution via
the spectral theorem, to lower bounds for the transport exponents.
In one space dimension, these continuity properties can in turn be
investigated via an analysis of the solutions of the
time-independent Schr\"odinger equation. Our goal is to carry this
out for the weakly coupled Fibonacci Hamiltonian. Indeed, results
of this kind are known, but the dependence of the quantities
entering the estimates on the coupling constant had not been
optimized. We revisit these approaches here and improve them to
yield the best possible quantitative estimates at small coupling
that can be obtained with current technology. Our results in Section \ref{s.smte} are
likely not optimal and in particular do not approach the (known)
zero-coupling values. We regard it as an interesting open problem
to either prove or disprove that the dimension estimates for the
spectral measures and the lower bounds for the transport exponents
approach the values in the free (zero coupling) case as the
coupling approaches zero.

Some of the results of this paper were announced in \cite{DG09b}.

\subsection{Overview of the Paper}

Let us outline the remaining parts of this paper. In
Section~\ref{s.prelim} we give some necessary background
information and recall how the trace map arises in the context of
the Fibonacci Hamiltonian and some of its basic properties.
Moreover, since our aim is an understanding of weak coupling
phenomena, we discuss the case of zero coupling.
Section~\ref{s.saas} is the heart of the paper. Here we regard the
weak coupling scenario as a perturbation of zero coupling and
study the dynamics of the trace map and consequences thereof for
the structure of the spectrum as a set. Section~\ref{s.ids}
considers the integrated density of states and proves complete gap
labeling at weak coupling and our quantitative version of the
linear gap opening result. Spectral measures and transport
exponents are studied by means of solution estimates in
Section~\ref{s.smte}. Higher-dimensional models generated by a
product construction are discussed in Section~\ref{s.chdm}, where
we confirm some predictions of Even-Dar Mandel and Lifshitz. Since
their model is based on the off-diagonal Fibonacci Hamiltonian and
the main body of this paper (and most of the other mathematical
works on the Fibonacci Hamiltonian) considers the diagonal
Fibonacci Hamiltonian, we develop all the basic results for the
off-diagonal model in Appendix~\ref{a.odfh} and explain there how
our work indeed confirms the predictions for the original Even-Dar
Mandel-Lifshitz product model.

To assist the reader in locating the proofs of the theorems from
the previous subsection, here is where they may be found: We prove
Theorem~\ref{t.2} (which implies Theorem~\ref{t.1}) in Subsection~\ref{ss.egs}, Theorem~\ref{t.3} in Subsection~\ref{ss.gapsizes}, Theorem~\ref{t.4} in Subsection~\ref{ss.sfh}, Theorem~\ref{t.completegaplabeling} in Subsection~\ref{ss.cgl}, and finally Theorem~\ref{t.gaplimit} in Subsection~\ref{ss.agl}.

\bigskip

\noindent\textit{Acknowledgment.} We are grateful to Mark Embree for generating the plots shown in Figures~\ref{f.specplot} and \ref{f.specplot2}.

\section{Preliminaries}\label{s.prelim}

\subsection{Description of the Trace Map and Previous Results}

The main tool that we are using here is the so-called
\textit{trace map}. It was originally introduced in \cite{Ka,KKT};
further useful references include \cite{BGJ,BR,HM,Ro}. Let us
quickly recall how it arises from the substitution invariance of
the Fibonacci potential; see \cite{S87} for detailed proofs of
some of the statements below.

The one step transfer matrices associated with the difference
equation $H_{V,\omega} u = E u$ are given by
$$
T_{V,\omega}(m,E) = \begin{pmatrix} E - V \chi_{[1-\alpha,1)}(m
\alpha + \omega \!\!\!\! \mod 1) & -1 \\
1 & 0 \end{pmatrix}.
$$
Denote the Fibonacci numbers by $\{F_k\}$, that is, $F_0 = F_1 =
1$ and $F_{k+1} = F_k + F_{k-1}$ for $k \ge 1$. Then, one can show
that the matrices
$$
M_{-1}(E) = \begin{pmatrix} 1 & -V \\ 0 & 1
\end{pmatrix} , \quad M_0(E) =
\begin{pmatrix} E & -1 \\ 1 & 0 \end{pmatrix}
$$
and
$$
M_k(E) = T_{V,0}(F_k,E) \times \cdots \times T_{V,0}(1,E) \quad
\text{ for } k \ge 1
$$
obey the recursive relations
$$
M_{k+1}(E) = M_{k-1}(E) M_k(E)
$$
for $k \ge 0$. Passing to the variables
$$
x_k(E) = \frac12 \mathrm{Tr} M_k(E),
$$
this in turn implies
$$
x_{k+1}(E) = 2 x_k(E) x_{k-1}(E) - x_{k-2}(E).
$$
These recursion relations exhibit a conserved quantity; namely, we
have
$$
x_{k+1}(E)^2+x_k(E)^2+x_{k-1}(E)^2 - 2 x_{k+1}(E) x_k(E)
x_{k-1}(E) -1 = \frac{V^2}{4}
$$
for every $k \ge 0$.

Given these observations, it is then convenient to introduce the
\textit{trace map}
$$
T : \Bbb{R}^3 \to \Bbb{R}^3, \; T(x,y,z)=(2xy-z,x,y).
$$
The following function\footnote{The function
$G(x,y,z)$ is called the \textit{Fricke character}, or sometimes the \textit{Fricke-Vogt invariant}.}
$$
G(x,y,z) = x^2+y^2+z^2-2xyz-1
$$
is invariant under the action of $T$, and hence $T$
preserves the family of cubic surfaces\footnote{The surface $S_0$
is called the \textit{Cayley cubic}.}
$$
S_V = \left\{(x,y,z)\in \Bbb{R}^3 : x^2+y^2+z^2-2xyz=1+
\frac{V^2}{4} \right\}.
$$
Plots of the surfaces $S_{0.01}$ and $S_{0.5}$ are given in Figures~\ref{fig:s0.01} and \ref{fig:s0.5}, respectively.

It is of course natural to consider the restriction $T_{V}$ of the
trace map $T$ to the invariant surface $S_V$. That is, $T_{V}:S_V
\to S_V$, $T_{V}=T|_{S_V}$. Denote by $\Omega_{V}$ the set of
points in $S_V$ whose full orbits under $T_{V}$ are bounded. {\it
A priori} the set of bounded orbits of $T_V$ could be different
from the non-wandering set\footnote{A point $p\in M$ of a
diffeomorphism $f:M\to M$ is wandering if there exists a
neighborhood $O(p)\subset M$ such that $f^k(O)\cap O = \emptyset$
for any $k\in \mathbb{Z}\backslash 0$. The non-wandering set of
$f$ is the set of points that are not wandering.} of $T_V$, but
our construction of the Markov partition and our analysis of the
behavior of $T_V$ near singularities show that here these two sets
do coincide. Notice that this is parallel to the construction of
the symbolic coding in \cite{Cas}.

Let us recall that an invariant closed set $\Lambda$ of a
diffeomorphism $f : M \to M$ is \textit{hyperbolic} if there
exists a splitting of the tangent space $T_xM=E^u_x\oplus E^u_x$ at
every point $x\in \Lambda$ such that this splitting is invariant
under $Df$, the differential $Df$ exponentially contracts vectors
from the stable subspaces $\{E^s_x\}$, and the differential of the
inverse, $Df^{-1}$, exponentially contracts vectors from the
unstable subspaces $\{E^u_x\}$. A hyperbolic set $\Lambda$ of a
diffeomorphism $f : M \to M$ is \textit{locally maximal} if there
exists a neighborhood $U$ of $\Lambda$ such that
$$
\Lambda=\bigcap_{n\in\Bbb{Z}}f^n(U).
$$

We want to recall the following central result.

\bthm[\cite{Cas}, \cite{DG09a}, \cite{Can}]\label{Casdagli} For $V\ne 0$, the set
$\Omega_{V}$ is a locally maximal hyperbolic set of
$T_{V}:S_V \to S_V$. It is homeomorphic to a Cantor set. \ethm

\begin{figure}[htp]\begin{minipage}{5cm} \includegraphics[width=1.2\textwidth]{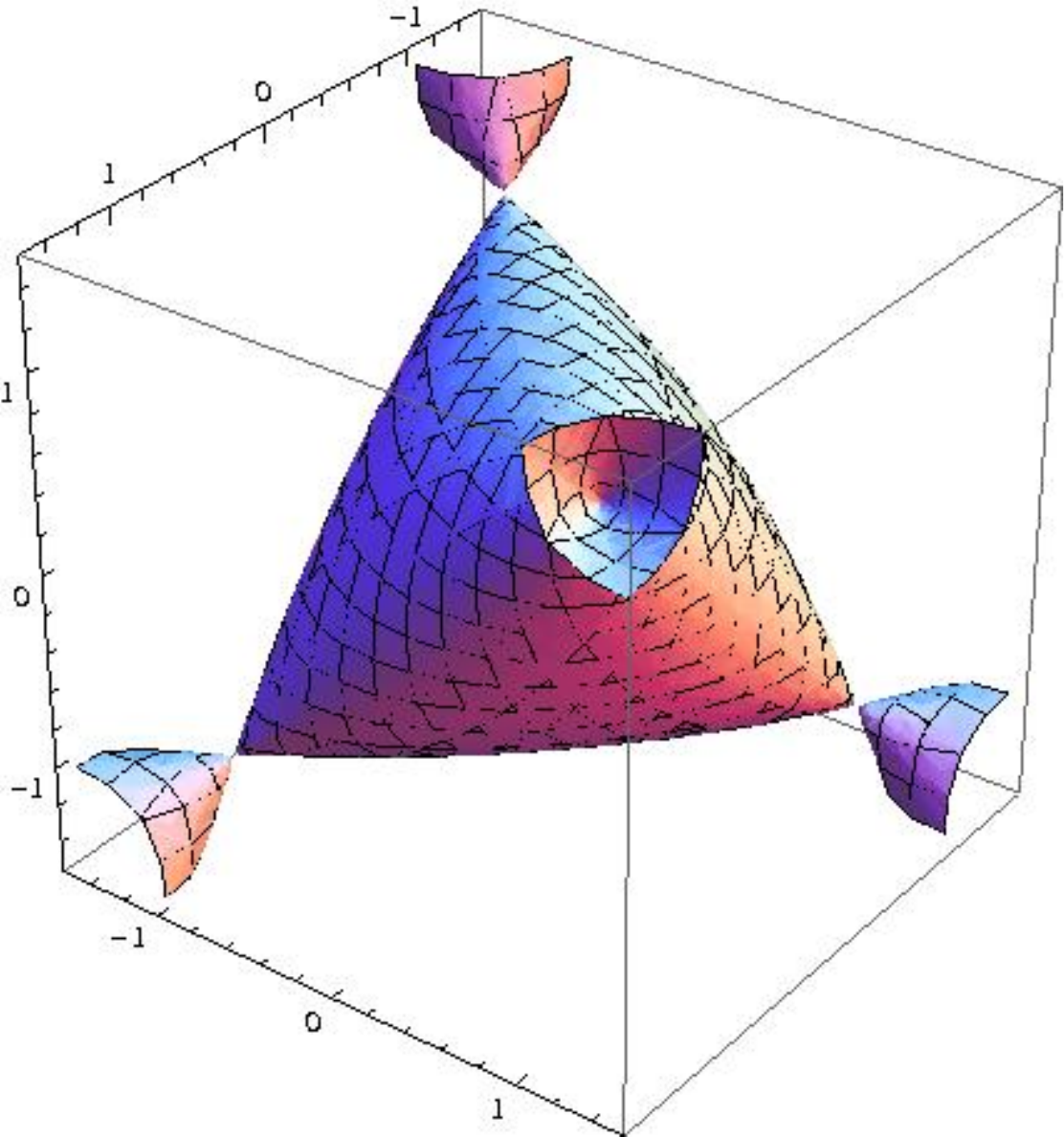} \par
\caption{The surface $S_{0.01}$.} \label{fig:s0.01}
\end{minipage} \hfill \begin{minipage}{5cm} \includegraphics[width=1.2\textwidth]{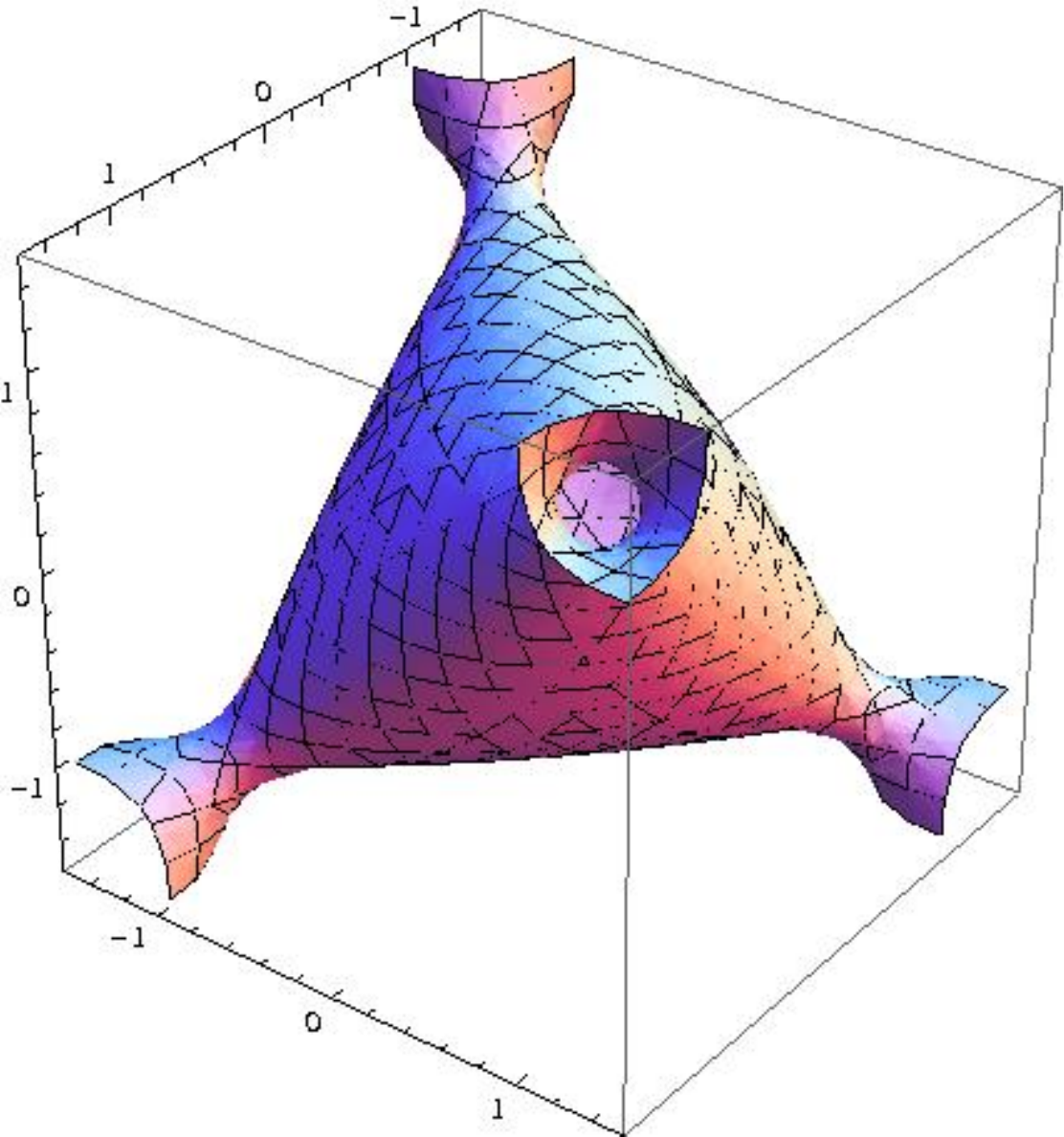}
\par \caption{The surface $S_{0.5}$.} \label{fig:s0.5} \end{minipage}
\end{figure}

Denote by $\ell_V$ the line
$$
\ell_V = \left\{ \left(\frac{E-V}{2}, \frac{E}{2}, 1 \right) : E
\in \Bbb{R} \right\}.
$$
It is easy to check that $\ell_V \subset S_V$.

The second central result about the trace map we wish to recall was proven by S\"ut\H{o} in \cite{S87}.
%
%

\bthm[S\"ut\H{o} 1987]\label{spectrum}  An energy $E$ belongs to
the spectrum of $H_{V,\omega}$ if and only if the positive
semiorbit of the point $(\frac{E-V}{2}, \frac{E}{2}, 1)$ under
iterates of the trace map $T$ is bounded. \ethm

In fact, as also shown by S\"ut\H{o} in \cite{S87}, the trace map can be used to generate canonical approximations of the spectrum, $\Sigma_V$. Namely, consider the following sets:
$$
\Sigma_V^{(n)} = \{ E \in \R : \text{ for } (x_n , y_n , z_n) = T^n(\tfrac{E-V}{2}, \tfrac{E}{2}, 1), \text{ we have } \min\{ |x_n| , |y_n| \} \le 1 \}.
$$
Then, we have $\Sigma_V^{(n)} \supseteq \Sigma_V^{(n+1)} \to \Sigma_V$,
that is,
$$
\Sigma_V = \bigcap_{n \in \Z_+} \Sigma_V^{(n)}.
$$


\begin{figure}[htp]
\includegraphics[width=1\textwidth]{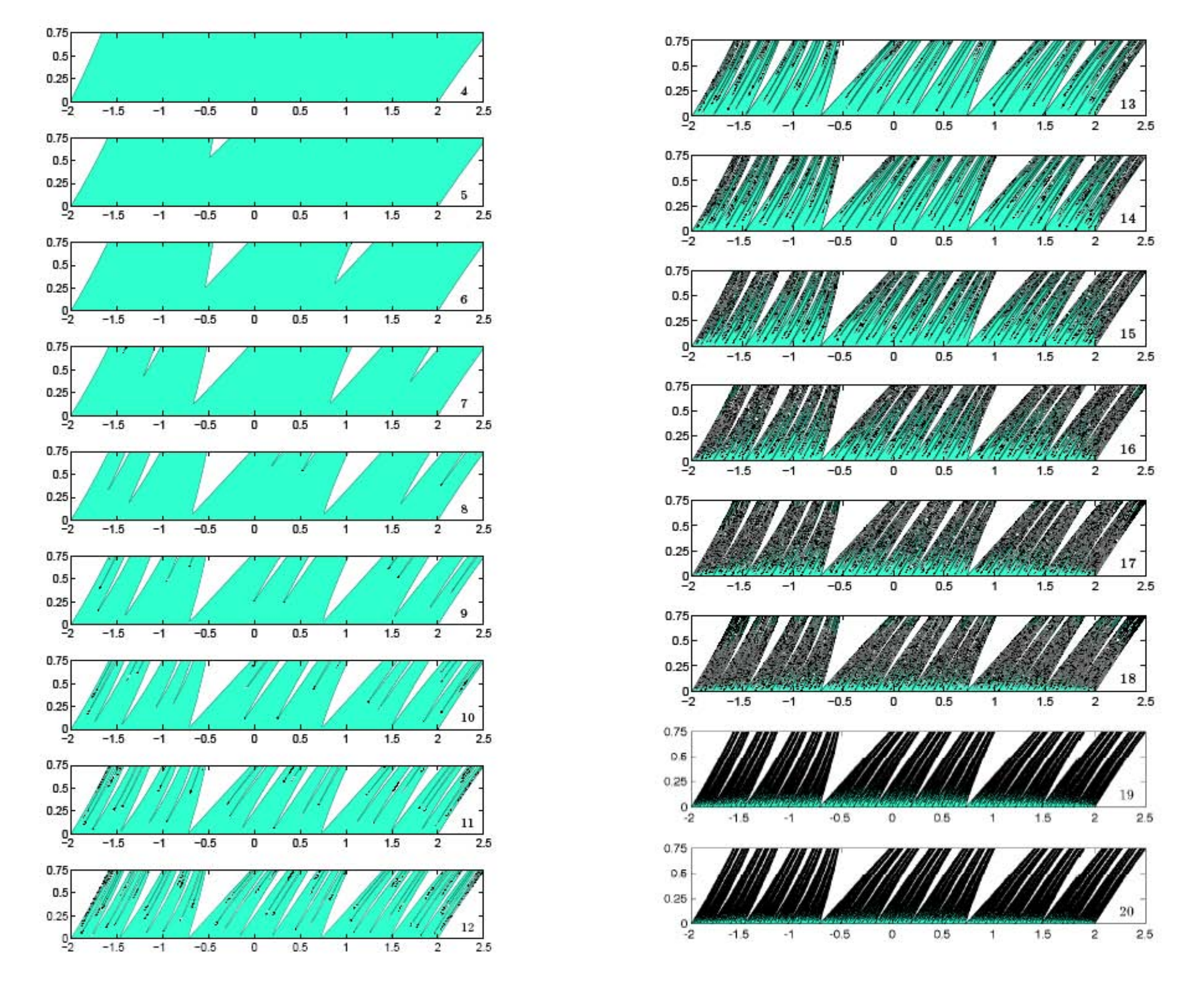} \par
\caption{The sets $\{ (E,V) : E \in \Sigma_V^{(n)} , \; 0 \le V \le \tfrac34 \}$ through $n = 20$.} \label{f.specplot}
\end{figure}

Figure~\ref{f.specplot} shows a plot of the sets $\{ (E,V) : E \in \Sigma_V^{(n)} , \; 0 \le V \le \tfrac34 \}$ for values of $n$ up to $20$.\footnote{This is also how the plot in Figure~\ref{fig:specplot20} was obtained. In fact, what is shown there is the set $\{ (E,V) : E \in \Sigma_V^{(20)} , \; 0 \le V \le 2 \}$.} These plots illustrate nicely both the linear gap opening and the fact that the size of a gap depends on its label; compare Theorems~\ref{t.3} and \ref{t.gaplimit}. To further document linear gap opening through numerics, Figure~\ref{f.specplot2} zooms into a portion of Figure~\ref{f.specplot} near a point $(E,0)$ for an energy $E$ where a gap opens; we have chosen $E \approx 0.7248$.


\begin{figure}[htp]
\includegraphics[width=1\textwidth]{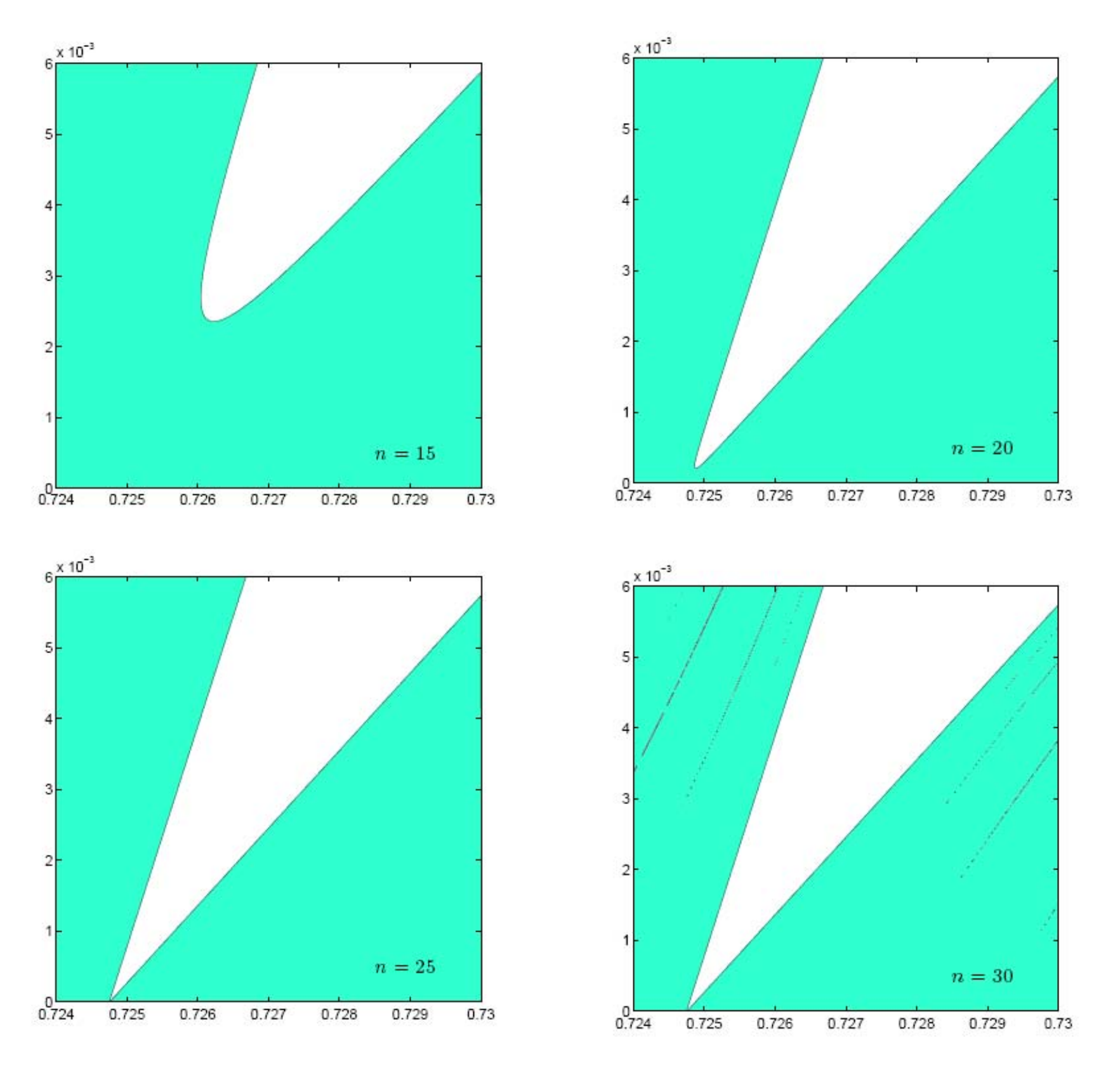} \par
\caption{The sets $\{ (E,V) : E \in \Sigma_V^{(n)} \}$ near $(0.7248,0)$ for $n = 15, 20, 25, 30$.} \label{f.specplot2}
\end{figure}

\subsection{Properties of the Trace Map for $V=0$}\label{s.vequzero}

We will regard the case of small $V$ as a small perturbation of
the case $V = 0$.  This subsection is devoted to the study of this
``unperturbed case.''

Denote by $\mathbb{S}$ the part of the surface $S_0$ inside the
cube $\{ |x|\le 1, |y|\le 1, |z|\le 1\}$. The surface $\mathbb{S}$
is homeomorphic to $S^2$, invariant, smooth everywhere except at
the four points $P_1=(1,1,1)$, $P_2=(-1,-1,1)$, $P_3=(1,-1,-1)$,
and $P_4=(-1,1,-1)$, where $\mathbb{S}$ has conic singularities,
and the trace map $T$ restricted to $\mathbb{S}$ is a factor of the
hyperbolic automorphism of $\T^2 = \R^2 / \Z^2$ given by
$$
\mathcal{A}(\theta, \varphi)=(\theta + \varphi, \theta)\
(\text{\rm mod}\ 1).
$$
The semiconjugacy is given by the map
$$
F: (\theta, \varphi) \mapsto (\cos 2\pi(\theta + \varphi), \cos
2\pi \theta, \cos 2\pi \varphi).
$$
The map $\mathcal{A}$ is hyperbolic, and is given by the matrix
$A=\begin{pmatrix} 1 & 1 \\ 1 & 0 \end{pmatrix}$, which has
eigenvalues
$$
\mu=\frac{1+\sqrt{5}}{2}\ \ \text{\rm and} \ \ \
-\mu^{-1}=\frac{1-\sqrt{5}}{2}.
$$
Let us denote by $\mathbf{v}^u, \mathbf{v}^u \in \mathbb{R}^2 $
the unstable and stable eigenvectors of $A$:
$$
A\mathbf{v}^u=\mu\mathbf{v}^u, \
A\mathbf{v}^s=-\mu^{-1}\mathbf{v}^s, \
\|\mathbf{v}^u\|=\|\mathbf{v}^s\|=1.
$$
Fix some small $\zeta > 0$ and define the stable (resp., unstable)
cone fields on $\mathbb{R}^2$ in the following way:
\begin{align}\label{e.cones}
    K^s_p & =\{\mathbf{v}\in T_p\mathbb{R}^2 :
\mathbf{v}=v^u\mathbf{v}^u+v^s\mathbf{v}^s, \ |v^s| > \zeta^{-1}
|v^u|\},\\
\nonumber    K^u_p & =\{\mathbf{v}\in T_p\mathbb{R}^2 :
\mathbf{v}=v^u\mathbf{v}^u+v^s\mathbf{v}^s, \ |v^u|
> \zeta^{-1} |v^s|\}.
\end{align}
These cone fields are invariant:
$$
\forall\ \mathbf{v}\in K^u_p\ \ \ \ A\mathbf{v}\in K^u_{A(p)},
$$
$$
\forall\ \mathbf{v}\in K^s_p\ \ \ \ A^{-1}\mathbf{v}\in
K^s_{A^{-1}(p)}.
$$
Also, the iterates of the map $A$ expand vectors from the unstable
cones, and the iterates of the map $A^{-1}$ expand vectors from
the stable cones:
$$
\forall\ \mathbf{v}\in K^u_p\ \ \ \ \forall\ n\in \mathbb{N}\ \ \
\ |A^n\mathbf{v}| > \frac{1}{\sqrt{1 + \zeta^2}}
\mu^n|\mathbf{v}|,
$$
$$
\forall\ \mathbf{v}\in K^s_p\ \ \ \ \forall\ n\in \mathbb{N}\ \ \
\ |A^{-n}\mathbf{v}| > \frac{1}{\sqrt{1 + \zeta^2}}
\mu^n|\mathbf{v}|.
$$
The families of cones $\{K^s\}$ and $\{K^u\}$ invariant under
$\mathcal{A}$ can be also considered on $\mathbb{T}^2$.

The differential of the semiconjugacy $F$ sends these cone
families to stable and unstable cone families on
$\mathbb{S}\backslash \{P_1, P_2, P_3, P_4\}$. Let us denote these
images by $\{\mathcal{K}^s\}$ and $\{\mathcal{K}^u\}$.

\blm[Lemma 3.1 from \cite{DG09a}]\label{l.differential} The
differential of the semiconjugacy $DF$ induces a map of the unit
bundle of $\mathbb{T}^2$ to the unit bundle of
$\mathbb{S}\backslash \{P_1, P_2, P_3, P_4\}$. The derivatives of
the restrictions of this map to fibers are uniformly bounded. In
particular, the sizes of the cones in the families
$\{\mathcal{K}^s\}$ and $\{\mathcal{K}^u\}$ are uniformly bounded
away from zero. \elm

Finally, consider the Markov partition for the map
$\mathcal{A}:\mathbb{T}^2\to \mathbb{T}^2$ 
that is shown in
Figure~\ref{fig.Casdagli-Markov} (and which had already appeared
in \cite{Cas}; for more details on Markov partitions for
two-dimensional hyperbolic maps see \cite[Appendix 2]{PT}). Its
image under the map $F:\mathbb{T}^2\to \mathbb{S}$ is a Markov
partition for the pseudo-Anosov map $T:\mathbb{S}\to \mathbb{S}$.

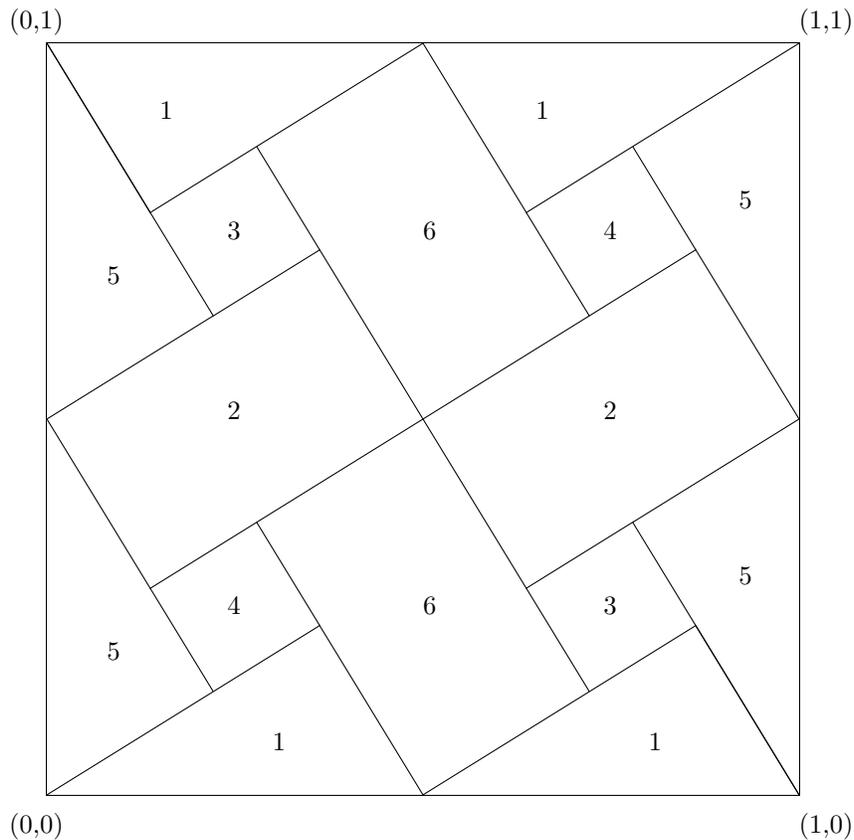
\begin{figure}[htb]
\setlength{\unitlength}{1mm}
\begin{picture}(120,120)

\put(10,10){\framebox(100,100)}

\put(5,5){(0,0)}

\put(110,5){(1,0)}

\put(5,112){(0,1)}

\put(110,112){(1,1)}

\put(10,10){\line(161,100){36.2}}

\put(60,10){\line(-61,100){13.8}}

\put(40,16){$1$}

\put(60,10){\line(161,100){36.2}}

\put(110,10){\line(-61,100){13.8}}

\put(90,16){$1$}

\put(60,110){\line(-161,-100){36.2}}

\put(10,110){\line(61,-100){13.8}}

\put(25,100){$1$}

\put(110,110){\line(-161,-100){36.2}}

\put(60,110){\line(61,-100){13.8}}

\put(75,100){$1$}

\put(10,60){\line(61,-100){22.1}}

\put(18,28){$5$}

\put(10,60){\line(161,100){22.1}}

\put(10,110){\line(61,-100){22.1}}

\put(18,78){$5$}

\put(110,60){\line(-61,100){22.1}}

\put(102,38){$5$}

\put(110,60){\line(-161,-100){22.1}}

\put(110,10){\line(-61,100){22.1}}

\put(102,88){$5$}

\put(60,60){\line(-161,-100){36.3}}

\put(60,60){\line(161,100){36.3}}

\put(60,60){\line(-61,100){22.1}}

\put(60,60){\line(61,-100){22.1}}

\put(88,46.35){\line(-161,-100){14.3}}

\put(32,73.65){\line(161,100){14.3}}

\put(37.88,46.28){\line(61,-100){8.4}}

\put(82.12,73.72){\line(-61,100){8.4}}

\put(34,34){$4$}

\put(84,84){$4$}

\put(34,84){$3$}

\put(84,34){$3$}

\put(34,60){$2$}

\put(84,60){$2$}

\put(60,34){$6$}

\put(60,84){$6$}

\end{picture}
\caption{The Markov partition for the map
$\mathcal{A}$.}\label{fig.Casdagli-Markov}
\end{figure}

\section{The Spectrum as a Set}\label{s.saas}

It is not hard to see that the line $\ell_V$ is transversal to the
stable manifolds of the hyperbolic set $\Omega_V$ for small values
of $V$ (see, e.g., \cite[Lemma~5.5]{DG09a}). Therefore the
intersection of $\ell_V$ and $W^s(\Omega_V)$ (and, hence,
$\Sigma_V$) is a dynamically defined Cantor set (see, for example, \cite{T}). In this section
we study the properties of this one-parameter family of Cantor sets. Namely, in Subsection \ref{ss.gapsizes} we prove that the size of a given gap in the Cantor set tends to zero linearly as the coupling constant (the parameter) tends to zero. In Subsection \ref{s.s04} we use normally hyperbolic theory to introduce a normalizing coordinate system in a neighborhood of a singularity. Then in Subsection \ref{ss.ordering} the order of the gaps that is related to the dynamics is chosen. Roughly speaking, the longer it takes for a gap to leave the union of the elements of the Markov partition, the higher is the order of the gap.  Next, in Subsection \ref{ss.dynnearsing} this normalizing coordinate system is used to study the distortion properties of the transitions through a neighborhood of a singularity. Finally, in the last three subsections we bring all the pieces together and prove the distortion property that immediately implies Theorem \ref{t.2}.

\subsection{Linear Gap Opening as the Potential is Turned On}\label{ss.gapsizes}

Here we prove Theorem \ref{t.3}.

Consider
the dynamics of $T$ in a neighborhood of $P_1=(1,1,1)$. Due to the symmetries of the trace map this will also provide information on the dynamics near the other singularities. Take $r_0>0$ small and let $O_{r_0}(P_1)$ be an $r_0$-neighborhood
of the point $P_1=(1, 1, 1)$ in
$\mathbb{R}^3$. Let us consider the set $Per_2(T)$ of periodic
points of $T$ of period 2; compare Figures~\ref{fig:sing0.0} and \ref{fig:sing0.2}.

\begin{figure}[htb] \begin{minipage}{5cm} \includegraphics[width=1.2\textwidth]{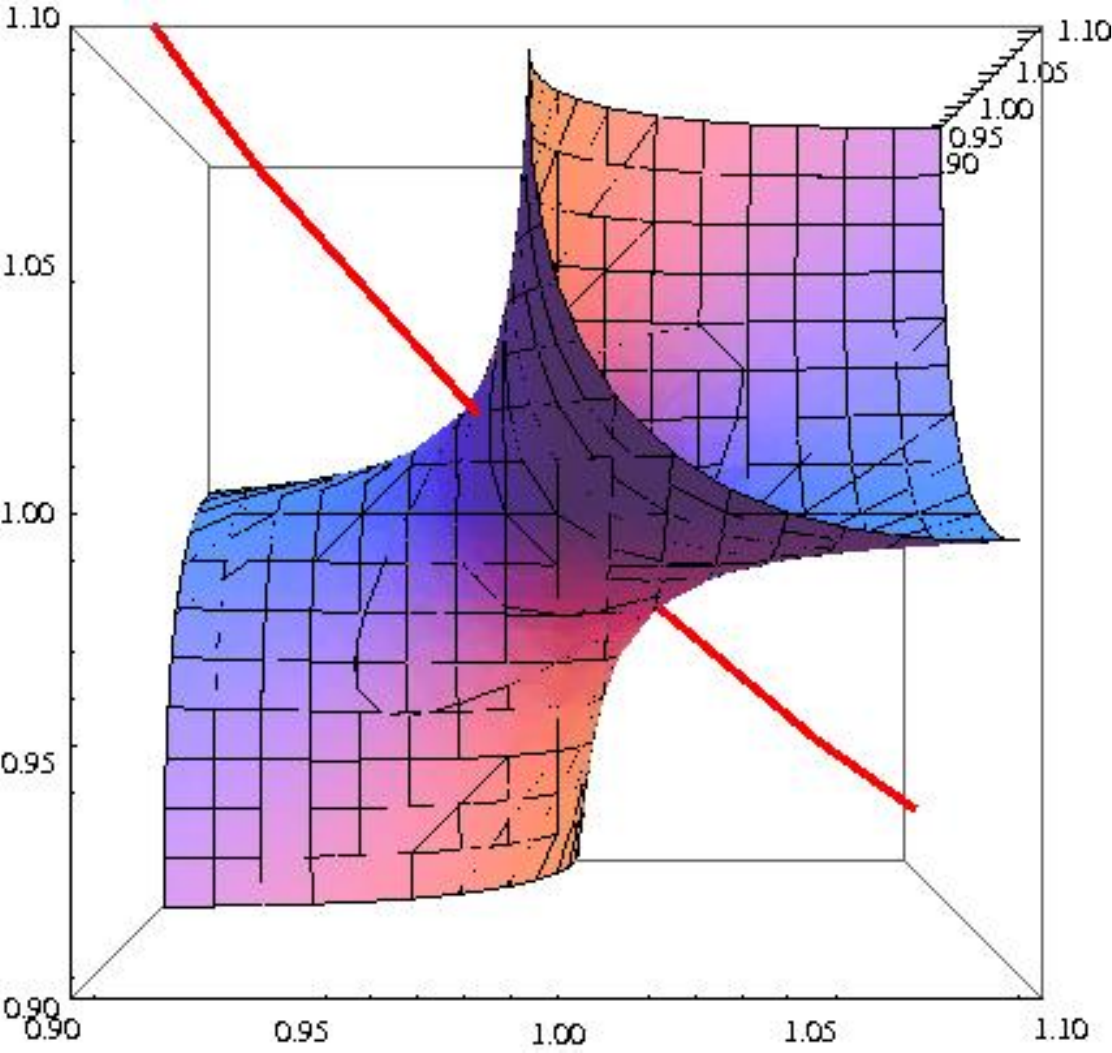} \par
\caption{$S_{0.1}$ and $Per_2(T)$ near $(1,1,1)$.}
\label{fig:sing0.0} \end{minipage} \hfill
\begin{minipage}{5cm}
\includegraphics[width=1.2\textwidth]{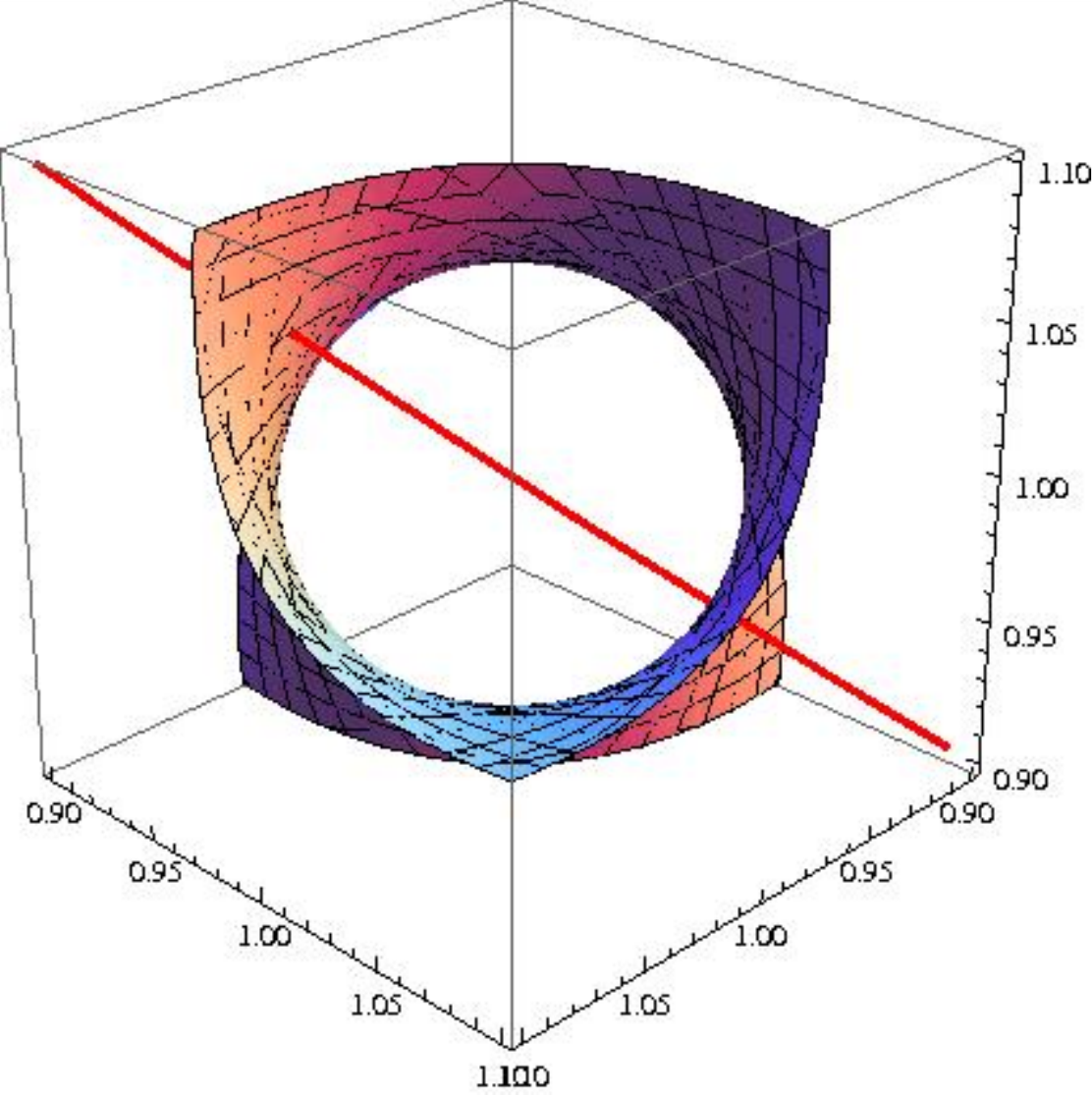}
\par \caption{$S_{0.2}$ and $Per_2(T)$ near $(1,1,1)$.} \label{fig:sing0.2}
\end{minipage} \end{figure}

\blm\label{l.periodtwo} We have
$$
Per_2(T)=\left\{(x,y,z) : x \in (-\infty, \tfrac{1}{2})\cup
(\tfrac{1}{2}, \infty), \ y=\frac{x}{2x-1}, \ z=x\right\}.
$$
\elm
\begin{proof}
Direct calculation.
\end{proof}
Notice that in a neighborhood $U_1$ of $P_1$, the intersection $I\equiv
Per_2(T)\cap U_1$ is a smooth curve that is a normally hyperbolic
with respect to $T$ (see, e.g., Appendix~1 in \cite{PT} for
the formal definition of normal hyperbolicity). Therefore, the
local center-stable manifold  $W_{loc}^{cs}(I)$ and the local
center-unstable manifold $W^{cu}_{loc}(I)$ defined by
$$
W_{loc}^{cs}(I)=\left\{p\in O_{r_0}(P_1) : T^n(p) \in O_{r_0}(P_1) \ \text{\rm for all}\
n\in \mathbb{N}\right\},
$$
$$
W_{loc}^{cu}(I)=\left\{p\in O_{r_0}(P_1) : T^{-n}(p) \in O_{r_0}(P_1) \ \text{\rm for
all}\ n\in \mathbb{N}\right\}
$$
are smooth two-dimensional surfaces. Also, the local strong stable
manifold $W^{ss}_{loc}(P_1)$ and the local strong unstable
manifold $W^{uu}_{loc}(P_1)$ of the fixed point $P_1$, defined by
$$
W^{ss}_{loc}(P_1)=\left\{p\in W^{cs}_{loc}(I) : T^n(p)\to P_1 \
\text{\rm as}\ n\to \infty\right\},
$$
$$
W^{uu}_{loc}(P_1)=\left\{p\in W^{cu}_{loc}(I) : T^{-n}(p)\to P_1 \
\text{\rm as}\ n\to \infty\right\},
$$
are smooth curves.

The Markov partition for the pseudo-Anosov map $T:\mathbb{S}\to \mathbb{S}$ can be extended to a Markov partition for the map $T_V:S_V\to S_V$ for small values of $V$. Namely, there are
four singular points $P_1=(1,1,1)$, $P_2=(-1,-1,1)$, $P_3=(1,-1,-1)$,
and $P_4=(-1,1,-1)$ of $\mathbb{S}$. The point
$P_1$ is a fixed point of $T$, and the points $P_2$, $P_3$, $P_4$
form a periodic orbit of period 3. For small $V$, on the surface
$S_V$ near $P_1$ there is a hyperbolic orbit of the map
$T_V=T|_{S_V}$ of period 2, and near the orbit $\{P_2, P_3, P_4\}$
there is a hyperbolic periodic orbit of period 6. Pieces of stable
and unstable manifolds of these 8 periodic points form a Markov
partition for $T_V:S_V\to S_V$. For $V\ne 0$, the elements of this
Markov partition are disjoint. Let us denote these six rectangles
(the elements of the Markov partition) by $R_V^1, R_V^2, \ldots,
R_V^6$. Let us also denote $R_V=\cup_{i=1}^6R_V^i$.

It is convenient now to consider $T_V^6:S_V\to S_V$ since for
$T_V^6$, each of the eight periodic points that were born from
singularities becomes a fixed point. Due to the symmetries of the
trace map, the dynamics of $T^6$ is the same in a neighborhood of
each of the singularities $P_1, P_2, P_3,$ and $P_4$.
Pick any bounded gap $O$ in $\ell_V\backslash W^s(\Omega_V)$. The boundary points
of $O$ belong to stable manifolds of two fixed points near one of
the singularities $P_1, P_2, P_3,$ or $P_4$. Without loss of
generality assume that those fixed points are in a neighborhood of
$P_1$.

The set of fixed points of $T^6$ in a neighborhood $U_1$ of $P_1$
is a smooth curve $\text{\rm Fix}(T^6, O_{r_0}(P_1))=Per_2(T)\cap O_{r_0}(P_1)$; see Lemma \ref{l.periodtwo} above.  
Each of the fixed points has one of the eigenvalues equal to 1,
one greater than 1, and one smaller than 1 in absolute value.
Therefore the curve $\text{\rm Fix}(T^6, O_{r_0}(P_1))$ is a normally
hyperbolic manifold, and its stable set $W^s(\text{\rm Fix}(T^6,
O_{r_0}(P_1)))$ is a smooth two dimensional surface; see \cite{HPS}. The strong
stable manifolds form a $C^1$-foliation of $W^s(\text{\rm
Fix}(T^6, O_{r_0}(P_1)))$; see \cite[Theorem B]{PSW}.

If ${\bf{p}}_V$ and ${\bf{q}}_V$ are two fixed points of $T^6_V$
in $O_{r_0}(P_1)$, then these points form the intersection of the curve
$\text{\rm Fix}(T^6, O_{r_0}(P_1))$ with $S_V$ and can be found from the
system
$$
\left\{
  \begin{array}{ll}
   y=\frac{x}{2x-1} \\
   z=x \\
   x^2+y^2+z^2-2xyz=1+\frac{V^2}{4}.
  \end{array}
\right.
$$
If we parameterize  $\text{\rm Fix}(T^6, O_{r_0}(P_1))$ as $\{x=t+1, z=t+1,
y=\frac{t+1}{2t+1}\}$, we get
$$
(t+1)^2+\frac{t+1}{2t+1}+(t+1)^2-2(t+1)^2\frac{t+1}{2t+1}=1+\frac{V^2}{4},
$$
or
$$
\frac{4t^4+10t^3+9t^2+4t+1}{(2t+1)^2}=1+\frac{V^2}{4}.
$$
Since for the function
$f(t)=\frac{4t^4+10t^3+9t^2+4t+1}{(2t+1)^2}$, we have $f(0)=1,
f'(0)=0, f''(0)>0$, the distance between the points ${\bf{p}}_V$ and
${\bf{q}}_V$ is of order $|V|$ for small values of $V$.

\blm\label{l.limit} Let $W\subset \mathbb{R}^3$ be a smooth
surface with a $C^1$-foliation on it. Let $\xi\subset W$ be a smooth
curve transversal to the foliation. Fix a leaf $L\subset W$ of the
foliation, and denote $P=L\cap \xi$. Take a point $Q\ne P$, $Q\in
L$, and a line $\ell_0\subset \mathbb{R}^3$, $Q\in \ell_0$, tangent to
$W$ at $Q$, but not tangent to the leaf $L$. Suppose that a family
of lines $\{\ell_V\}_{V\in (0, V_0)}$ is given such that $\ell_V
\to \ell_0$ as $V \to 0$, each line $\ell_V$, $V>0$,  intersects $W$
at two points $p_V$ and $q_V$, and $p_V\to Q$, $q_V\to Q$ as $V\to
0$.

Denote by $L_{p_V}$ and $L_{q_V}$ the leaves of the foliation that
contain $p_V$ and $q_V$, respectively. Denote
${\bf{p}}_V=L_{p_V}\cap \xi$ and ${\bf{q}}_V=L_{q_V}\cap \xi$.
Then there exists a finite non-zero limit
 $$
 \lim_{V\to 0}\frac{\mathrm{dist}(p_V, q_V)}{\mathrm{dist}({\bf{p}}_V, {\bf{q}}_V)}.
 $$
\elm
\begin{proof}
Since $\ell_0$ is not tangent to the leaf $L$, there exists a
curve $\widetilde{\xi}\subset W$ tangent to $\ell_0$, transversal
to the foliation, and such that $Q\in \widetilde{\xi}$.

Set $\widetilde{\mathbf{p}}=L_{p_V}\cap\widetilde{\xi},
\widetilde{\mathbf{q}}=L_{q_V}\cap \widetilde{\xi}$. Since the
foliation is $C^1$, there exists a finite non-zero limit
\beq\label{e.limit1} \lim_{V\to 0}\frac{\text{\rm
dist}({\bf{p}}_V, {\bf{q}}_V)}{\text{\rm
dist}(\widetilde{{\bf{p}}}_V, \widetilde{{\bf{q}}}_V)}\ne 0. \eneq
Let us consider a plane $\Pi$ tangent to $W$ at $Q$, and let
$\pi:W\to \Pi$ be an orthogonal projection (well defined and
smooth in a neighborhood of $Q$). It is clear that
\beq\label{e.limit2} \lim_{V\to 0}\frac{\text{\rm
dist}(\pi(\widetilde{{\bf{p}}}_V),
\pi(\widetilde{{\bf{q}}}_V))}{\text{\rm
dist}(\widetilde{{\bf{p}}}_V, \widetilde{{\bf{q}}}_V)}=1. \eneq
Also, since $\ell_V\to \ell_0$ as $V\to 0$, we have
\beq\label{e.limit3} \lim_{V\to 0}\frac{\text{\rm
dist}(\pi({{{p}}}_V), \pi({{{q}}}_V))}{\text{\rm dist}({{{p}}}_V,
{{{q}}}_V)}=1. \eneq Finally, since $\pi$ sends the
$C^1$-foliation of $W$ to a $C^1$-foliation on $\Pi$, the
projection along this foliation from $\pi(\ell_V)$ to $\ell_0$ is
$C^1$-close  to isometry. In particular, \beq\label{e.limit4}
\lim_{V\to 0}\frac{\text{\rm dist}(\pi({{{p}}}_V),
\pi({{{q}}}_V))}{\text{\rm dist}(\pi(\widetilde{{\bf{p}}}_V),
\pi(\widetilde{{\bf{q}}}_V))}=1. \eneq The statement of Lemma
\ref{l.limit} follows now from (\ref{e.limit1})--(\ref{e.limit4}).
\end{proof}

\begin{proof}[Proof of Theorem~\ref{t.3}]
Apply Lemma~\ref{l.limit} to the surface $W=W^s(\text{\rm
Fix}(T^6, O_{r_0}(P_1)))$.
\end{proof}

\subsection{Choice of a Coordinate System in a Neighborhood of a Singular
Point}\label{s.s04}

 Due to the smoothness of the
invariant manifolds of the curve of periodic points of period two
described in Section \ref{ss.gapsizes}, there exists a smooth
change of coordinates $\Phi:O_{r_0}(P_1)\to \mathbb{R}^3$ such
that $\Phi(P_1)=(0,0,0)$ and
\begin{itemize}
\item
$\Phi(I)$ is a part of the line $\{x=0, z=0\}$;
\item
$\Phi(W^{cs}_{loc}(I))$ is a part of the plane $\{z=0\}$;
\item
$\Phi(W^{cu}_{loc}(I))$ is a part of the plane $\{x=0\}$;
\item
$\Phi(W^{ss}_{loc}(P_1))$ is a part of the line $\{y=0, z=0\}$;
\item
$\Phi(W^{uu}_{loc}(P_1))$ is a part of the line $\{x=0, y=0\}$.
\end{itemize}
Denote $f=\Phi\circ T\circ \Phi^{-1}$.

In this case,
$$
A\equiv Df(0,0,0)=D(\Phi\circ T\circ \Phi^{-1})(0,0,0)=\begin{pmatrix}
                                       \lambda^{-1} & 0 & 0 \\
                                       0 & -1 & 0 \\
                                       0 & 0 & \lambda \\
                                     \end{pmatrix},
$$
where $\lambda$ is the largest eigenvalue of the differential
$DT(P_1):T_{P_1}\mathbb{R}^3\to T_{P_1}\mathbb{R}^3$,
$$
DT(P_1)=\begin{pmatrix}
                                       2 & 2 & -1 \\
                                       1 & 0 & 0 \\
                                       0 & 1 & 0 \\
                                     \end{pmatrix}, \ \ \ \lambda=\frac{3+\sqrt{5}}{2}=\mu^2.
$$
Let us denote $\frak{S}_V=\Phi(S_V)$. Then, away from $(0,0,0)$,
the family $\{\frak{S}_V\}$ is a smooth family of surfaces,
$\frak{S}_0$ is diffeomorphic to a cone, contains the lines
$\{y=0, z=0\}$ and $\{x=0, y=0\}$, and at each non-zero point on
these lines, it has a quadratic tangency with a horizontal or
vertical plane.

Due to the symmetries of the trace map, similar changes of
coordinates exist in a neighborhood of each of the other
singularities. Denote $O_{r_0}=O_{r_0}(P_1)\cup O_{r_0}(P_2)\cup
O_{r_0}(P_3)\cup O_{r_0}(P_4)$.

Fix a small constant $\mathbf{C} > 0$ and introduce the following
cone fields in $\mathbb{R}^3$: \beq\label{e.coneformula2}
\mathbf{K}_p^u=\{\mathbf{v}\in T_p\mathbb{R}^3, \
\mathbf{v}=\mathbf{v}_{xy}+\mathbf{v}_z : |\mathbf{v}_z|>
\mathbf{C} \sqrt{|z_p|}|\mathbf{v}_{xy}|\}, \eneq
\beq\label{e.cuconeformula}
\widetilde{\mathbf{K}}_p^{u}=\{\mathbf{v}\in T_p\mathbb{R}^3, \
\mathbf{v}=\mathbf{v}_{x}+\mathbf{v}_{yz} : |\mathbf{v}_z|>
\mathbf{C}^{-1}|\mathbf{v}_{xy}| \}, \eneq
\beq\label{e.coneformulas} \mathbf{K}_p^s=\{\mathbf{v}\in
T_p\mathbb{R}^3, \ \mathbf{v}=\mathbf{v}_{x}+\mathbf{v}_{yz} :
|\mathbf{v}_x|> \mathbf{C} \sqrt{|x_p|}|\mathbf{v}_{yz}|\}, \eneq
\beq\label{e.csconeformula}
\widetilde{\mathbf{K}}_p^{s}=\{\mathbf{v}\in T_p\mathbb{R}^3, \
\mathbf{v}=\mathbf{v}_{z}+\mathbf{v}_{xy} : |\mathbf{v}_x|>
\mathbf{C}^{-1}|\mathbf{v}_{yz}| \}. \eneq

\blm\label{l.intersection}
There are $r_1\in (0, r_0)$ and  $m_0\in \mathbb{N}$ such that the following holds.

1. $T^{m_0}(\ell_0)\cap O_{r_1}$ is a union of two connected
curves $\gamma_1$ and $\gamma_2$, and $\Phi(\gamma_i), i=1,2,$ is
tangent to the cone field $\widetilde{\mathbf{K}}^{u}$;

2. $F^{-1}(T^{m_0}(\ell_0\cap \mathbb{S}))$ is tangent to the cone
field $K^{u}$ {\rm (}defined by \eqref{e.cones}{\rm )}. \elm

\bdef We will call the rectangle ${R}^6$ {\rm (}the element of the
Markov partition{\rm )} {\it the opposite} to singularities $P_1$ and
$P_3$, and we will call the rectangle ${R}^5$ {\it the opposite} to
singularities $P_2$ and $P_4$
\endef

Notice that $T^{m_0}(\ell_0\cap \mathbb{S})$ consists of a curve
that connects $P_1$ with an unstable boundary of $R_6$, a finite
number of curves $\xi_i$ that connect two unstable sides of an
element of the Markov partition and such that $F^{-1}(\xi_i)$ is
tangent to the cone field $K^{u}$, and a curve that connects some
other singularity with an unstable boundary of its opposite
rectangle.

Now let us take $r_2\in (0, r_1)$ so small that $\cup_{i=-3}^{3}T^i(O_{r_2})\subset O_{r_1}$.

For small $V$, denote by $\mathbb{S}_{V, O_{r_2}}$ the bounded component
of $S_V\backslash O_{r_2}$. The family $\{\mathbb{S}_{V, O_{r_2}}\}_{V\in
[0,V_0)}$ of surfaces with boundary depends smoothly on the
parameter and has uniformly bounded curvature. For small $V$, a
projection $\pi_V:\mathbb{S}_{V, O_{r_2}} \to \mathbb{S}$ is defined.
The map $\pi_V$ is smooth, and if $p\in \mathbb{S}$, $q\in
\mathbb{S}_{V, O_{r_2}}$, and $\pi_V(q)=p$, then $T_p \mathbb{S}$ and
$T_q S_V$ are close. Denote by $\mathcal{K}_V^u$ (resp.,
$\mathcal{K}_V^s$) the image of the cone $\mathcal{K}^u$ (resp.,
$\mathcal{K}^s$) under the differential of $\pi_V^{-1}$.

Denote $F_V=\pi^{-1}_V\circ F, \ \ F_V:F^{-1}(\mathbb{S}\backslash
O_{r_2})\to S_V$. Denote also $\Psi_V=\Phi \circ F_V$. Compactness
and Mean Value Theorem type arguments imply the following
statement.

\blm\label{l.inequalities} There are $V_0>0$ and $\widetilde{C}>0$
such that the following holds. Suppose that $a, b\in
\mathbb{T}^2\backslash F^{-1}(O_{r_2})$, $v_a\in T_a\mathbb{T}^2$,
$v_b\in T_b\mathbb{T}^2$. Then the following inequalities hold for
all $V\in [0, V_0]$:
\begin{align*}
\text{\rm dist}(F_V(a),F_V(b)) & \le \widetilde{C} \, \text{\rm
dist}(a,b), \\
\text{\rm dist}(a,b) & \le \widetilde{C} \, \text{\rm
dist}(F_V(a),F_V(b)), \\
\angle(DF_V(v_a), DF_V(v_b)) & \le \widetilde{C} (\angle(v_a,
v_b)+\text{\rm dist}(a,b)), \\
\angle(v_a, v_b) & \le \widetilde{C} (\angle(DF_V(v_a),
DF_V(v_b))+\text{\rm dist}(F_V(a),F_V(b))).
\end{align*}
Moreover, if $\Psi_V(a)$ and $\Psi_V(b)$ are defined, then
\begin{align*}
\text{\rm dist}(\Psi_V(a),\Psi_V(b)) & \le \widetilde{C} \,
\text{\rm
dist}(a,b), \\
\text{\rm dist}(a,b) & \le \widetilde{C} \, \text{\rm
dist}(\Psi_V(a),\Psi_V(b)), \\
\angle(D\Psi_V(v_a), D\Psi_V(v_b)) & \le \widetilde{C}(\angle(v_a,
v_b)+\text{\rm dist}(a,b)), \\
\angle(v_a, v_b) & \le \widetilde{C}(\angle(D\Psi_V(v_a),
D\Psi_V(v_b))+\text{\rm dist}(\Psi_V(a),\Psi_V(b))).
\end{align*}
\elm

Finally, notice that if $\mathbf{C}$ and $V_0$ are taken
sufficiently small, then the cone fields $K^u$ on $\mathbb{T}^2$
and $\mathbf{K}^u, \widetilde{\mathbf{K}}^u$ respect each other in
the following sense. Suppose that $a\in \mathbb{T}^2\backslash
F^{-1}(O_{r_2})$ is such that $\Psi_V(a)$ is defined for $V\in [0,
V_0]$, and $v_a\in K^u_a$. Lemma~\ref{l.differential} implies that
$$
D\Psi_V(v_a)\in \mathbf{K}_{\Psi_V(a)}^u.
$$
On the other hand, if $b\in \mathbb{T}^2\backslash F^{-1}(O_{r_2})$ is such that
$\Psi_V(b)$ is defined and $D\Psi_V(v_b)\in
\widetilde{\mathbf{K}}^u_{\Psi_V(b)}$, then
$$
v_b\in K^u_b.
$$

\subsection{Ordering of the Gaps}\label{ss.ordering}

To estimate the thickness of a Cantor set from below (or the denseness from above), it is enough to consider one particular ordering of its gaps. Here we choose a convenient ordering of gaps in $\ell_V\cap W^s(\Omega_V)$ (which is affine equivalent to $\Sigma_V$).

The trace map $T_V$, $V\ne 0$,  has two periodic points of period
2, denote them by $P_1(V)$ and $P_1'(V)$, and six periodic points
of period 6, denote them by $P_2(V)$, $P_2'(V)$, $P_3(V)$,
$P_3'(V)$, $P_4(V)$, and $P_4'(V)$. In Section \ref{ss.gapsizes}
we showed that the distance between $P_i(V)$ and $P_i'(V)$ is of
order $|V|$.

We can choose the notation (swapping the notation for $P_1(V)$ and
$P_1'(V)$, and/or for $P_2(V)$ and $P_2'(V)$ if necessary) in such
a way that the following lemma holds.

\blm If $V$ is small enough, the line $\ell_V$ contains points
$B_1(V)\in \ell_V\cap W^{ss}(P_1(V))$ and $B_2(V)\in \ell_V\cap
W^{ss}(P_2(V))$ such that every point of the line which is not
between $B_1(V)$ and $B_2(V)$ tends to infinity under iterates of
$T_V$. \elm

Denote by $\mathbf{l}_V$ the closed interval on $\ell_V$ between
the points $B_1(V)$ and $B_2(V)$. It is known that the set of
points on $\mathbf{l}_V$ with bounded positive semiorbits is a
dynamically defined Cantor set; see \cite{DG09a, Can}. We would like
to estimate the thickness of this Cantor set.

\blm\label{l.firststep}
There are $m_0\in \mathbb{N}$, $0< C_1< C_2$ and $V_0>0$ such that for all $V\in [0, V_0]$ the following holds.

1. $T_V^{m_0}(\mathbf{l}_V)\cap O_{r_1}$ is a union of two connected curves $\gamma_1(V)$ and
$\gamma_2(V)$, and $\Phi(\gamma_i(V)), i=1,2,$ is tangent to the
cone field $\widetilde{\mathbf{K}}^{u}$;

2. $F^{-1}_V(T^{m_0}(\mathbf{l}_V)\backslash O_{r_2})$ is tangent
to the cone field $K^{u}$;

3. $T^{m_0}_V(\mathbf{l}_V)$ consists of a curve that connects
$W^{ss}_{loc}(P_1)$ with a stable boundary of $R^6$, a finite number
of curves $\xi_i(V)$, each of which connects two stable sides of
an element of the Markov partition and is such that
$F^{-1}_V(\xi_i)$ is tangent to the cone field $K^{u}$, a curve
that connects $W^{ss}_{loc}(P_i),\ i\in\{2, 3, 4\},$  with a stable boundary of
an opposite rectangle to $P_i$, and some ``gaps'' between the curves
mentioned above. The length of these ``gaps" is between $C_1 V$
and $C_2 V$ for all small enough $V$. \elm

\begin{proof}
The statement holds for $V=0$ (see Lemma \ref{l.intersection}). For $V$ positive but small enough properties 1. and 2. hold by continuity, and property 3. follows from the fact that the distances between finite pieces of strong stable manifolds of points $\mathbf{p}_V$ and $\mathbf{q}_V$ is of order $V$, and these strong stable manifolds form the stable parts of boundary of the Markov partition for $T_V$.
\end{proof}

We will call the preimages (under $T_V^{m_0}$) of the gaps defined in Lemma \ref{l.firststep} {\it the gaps of order 1}.

\bdef\label{d.tangenttounstable}
A smooth curve $\gamma\subset S_V$ is tangent to an unstable cone field if
$F_V^{-1}(\gamma\backslash O_{r_2})$ is tangent to $K^u$, and
$\Phi(\gamma\cap O_{r_1})$ is tangent to ${\mathbf{K}}^u$.
\endef

\bdef A  curve is of {\bf type one} if it is tangent to the
unstable cone field and connects opposite sides of unstable
boundaries of some ${R}^i$.

A  curve is of {\bf type two} if it is tangent to the unstable
cone field and connects a point from $W^{ss}_{loc}(P_i), i\in\{1, 2, 3, 4\}, $
with an unstable boundary of an opposite element of the Markov
partition.
\endef

In this terminology, Lemma~\ref{l.firststep} claims that
$T_V^{m_0}(\mathbf{l}_V)$ consists of two curves of type two, some
curves of type one, and some gaps between them of size of order
$V$.

\blm\label{l.curvetypes} An image of a curve of type one under $T^6$ is a union of a finite number
of curves of type one, and of a finite number of gaps of length
between $C_1V$ and $C_2V$. An image of a curve of type two under $T^6$ is a
union of a curve of type two, a finite number of curves of type
one, and a finite number of gaps of length between $C_1V$ and
$C_2V$. \elm

\begin{proof}
The first part follows from the properties of the Markov partition and the fact that the distance between strong stable (strong unstable) manifolds that form the Markov partition is of order $V$, see Subsection \ref{ss.gapsizes}. An image of a curve tangent to an unstable cone field is a curve tangent to an unstable cone field. Also, $T^6(W^{ss}_{loc}(P_i))\subset  W^{ss}_{loc}(P_i)$. Therefore the image of a curve of type two under $T^6$ is a curve which is close to a finite piece of a strong unstable manifold of $P_i$, so the second part follows.
\end{proof}

Suppose that the gaps of order $k$ have already been defined.
Consider the complement of all gaps of order not greater than $k$
on $\mathbf{l}_V$. It consists of a finite number of closed
intervals. Let $J$ be one of them. Consider the curve
$T_V^{m_0+6(k-1)}(J)$. By construction, it is either a curve of type
one, or of type two. In either case, the image
$T_V^6(T_V^{m_0+6(k-1)}(J))=T_V^{m_0+6k}(J)$ consists of some curves of
type one or two, and some gaps of size  $\sim V$. Let us say
that the preimages of these gaps (under $T_V^{m_0+6k}$) are gaps of
order $k+1$. It is clear that every gap in $\mathbf{l}_V\cap
W^s(\Omega_V)$ has some finite order. Therefore we have ordered
all the gaps.

\subsection{Distortion Property: Estimate of the Gap Sizes}\label{ss.egs}

Let us consider $\mathbf{l}_V$ and some gap $\gamma_G\subset \mathbf{l}_V$ of order
$n$. {\it A bridge} that corresponds to this gap is a connected
component of the complement of the union of all gaps of order $\le
n$ next to the gap. There are two bridges that correspond to the
chosen gap, take one of them, and denote it by $\gamma_B$. Denote
also $\gamma=\gamma_G\cup \gamma_B$. Now let us consider
$\Gamma_G\equiv T_V^{m_0+6(n-1)}(\gamma_G)$ and $\Gamma_B\equiv
T_V^{m_0+6(n-1)}(\gamma_B)$. By definition of the order $n$ of the gap we
know that
$$
C_3V\le \frac{|\Gamma_G|}{|\Gamma_B|}\le C_4V
$$
 for some constants $C_3$ and $C_4$ independent of $V$.

\bprop\label{p.distortion} There is a constant $K>1$ independent of
the choice of the gap and of $V$ such that
$$
K^{-1}\frac{|\gamma_G|}{|\gamma_B|}\le
\frac{|\Gamma_G|}{|\Gamma_B|}\le  K\frac{|\gamma_G|}{|\gamma_B|}.
$$
\enprop

Notice that Theorem \ref{t.2} immediately follows from Proposition
\ref{p.distortion}.

The rest of this section is devoted to the proof of Proposition
\ref{p.distortion}, which is completed in Subsection \ref{ss.proofdist}.

\subsection{Dynamics Near Singularities}\label{ss.dynnearsing}

Here we prove several technical propositions on the properties of
the trace map in the coordinate system constructed in Subsection \ref{s.s04}. The first two propositions are reformulations of
\cite[Proposition~1]{DG09a}. The first one claims that a certain
unstable cone field is invariant.

We will use the variables $(x,y,z)$ for coordinates in
$\mathbb{R}^3$. For a point $p\in \mathbb{R}^3$, we will denote
its coordinates by $(x_p, y_p, z_p)$.

\bprop\label{p.coneinvariance} Given $C_1>0, C_2>0, \lambda>1$,
there exists $\delta_0=\delta_0(C_1, C_2,  \lambda)$ such that for
any $\delta\in (0, \delta_0)$, the following holds.

Let $f:\mathbb{R}^3\to \mathbb{R}^3$ be a $C^2$-diffeomorphism
such that
\begin{itemize}
\item[{\rm (i)}] $\|f\|_{C^2}\le C_1$; \item[{\rm (ii)}] The plane
$\{z=0\}$ is invariant under iterates of $f$; \item[{\rm (iii)}]
$\|Df(p)-A\|<\delta$ for every $p\in \mathbb{R}^3$, where
$$
A=\begin{pmatrix}
    \lambda^{-1} & 0 & 0 \\
    0 & 1 & 0 \\
    0 & 0 & \lambda \\
  \end{pmatrix}
$$
is a constant matrix.
\end{itemize}
Introduce the following cone field in $\mathbb{R}^3$:
\beq\label{e.coneformula} K_p^u=\{\mathbf{v}\in T_p\mathbb{R}^3, \
\mathbf{v}=\mathbf{v}_{xy}+\mathbf{v}_z : |\mathbf{v}_z|\ge C_2
\sqrt{|z_p|}|\mathbf{v}_{xy}|\}. \eneq

Then for any point $p=(x_p, y_p, z_p)$, $|z_p|\le 1$ we have $$
Df(K_p)\subseteq K^u_{f(p)}.
$$
 \enprop

 Notice that the choice of the cone field $K_p^u$ here (in (\ref{e.coneformula})) and below (in (\ref{e.coneformula1})) corresponds to the choice of the cone field $\mathbf{K}_p^u$ in (\ref{e.coneformula2}).

The next proposition establishes expansion of vectors from the
introduced unstable cones under the differential of the map.

\bprop\label{p.expanding} Given $C_1>0, C_2>0, \lambda>1$,
$\varepsilon \in (0,\frac{1}{4})$,  $\eta>0$ there
exists $\delta_0=\delta_0(C_1, C_2, \lambda, \varepsilon)$,
$N_0\in \mathbb{N}, N_0=N_0(C_1, C_2, \lambda, \varepsilon,
\delta_0)$, and $C=C(\eta)>0$ such that for
any $\delta\in (0, \delta_0)$, the following holds.

Under the conditions of and with the notation from Proposition
\ref{p.coneinvariance}, suppose that for the points $p=(x_p, y_p,
z_p)$ and $q=(x_q, y_q, z_q)$, the following holds:

\vspace{3pt}

1.  $0<z_p<1$ and $0<z_q<1$;

\vspace{3pt}

2.  For some $N\ge N_0$ both $f^N(p)$ and $f^N(q)$  have
$z$-coordinates larger than 1, and both $f^{N-1}(p)$ and
$f^{N-1}(q)$ have $z$-coordinates not greater than 1;

\vspace{3pt}

3. There is a smooth curve $\gamma:[0,1]\to \mathbb{R}^3$ such
that $\gamma(0)=p$, $\gamma(1)=q$, and for each $t\in [0,1]$ we
have $\gamma'(t)\in K^u_{\gamma(t)}$;

\vspace{3pt}

If $N\ge N_0$ {\rm (}i.e., if $z_p$ is small enough{\rm )}, then
\beq\label{e.expansionforN} |Df^N(\mathbf{v})|\ge
\lambda^{\frac{N}{2}(1-4\varepsilon)}|\mathbf{v}| \ \ \ \text{for
any } \ \  \mathbf{v}\in K^u_p, \eneq and if
$Df^N(\mathbf{v})=\mathbf{u}=\mathbf{u}_{xy}+\mathbf{u}_z$, then
\beq\label{e.u} |\mathbf{u}_{xy}|<2\delta^{1/2}|\mathbf{u}_{z}|.
\eneq

Moreover, if $|\mathbf{v}_z|\ge \eta|\mathbf{v}_{xy}|$, then
\beq\label{e.expansionfork} |Df^k(\mathbf{v})|\ge
C\lambda^{\frac{k}{2}(1-4\varepsilon)}|\mathbf{v}| \ \ \ \text{for
each } \ \  k=1, 2, \ldots, N. \eneq

In particular, \beq\label{e.length} \mathrm{length} (f^N(\gamma))\ge
\lambda^{\frac{N}{2}(1-4\varepsilon)} \, \mathrm{length} (\gamma).
\eneq\enprop

In order to establish the distortion property we need better control over the expansion rates. In Proposition \ref{p.vectors} we improve the estimates given by \eqref{e.expansionforN} and \eqref{e.length}. As a first step we show that, roughly speaking, if a point stays for $N$ iterates in a neighborhood where normalizing coordinates are defined then it must be $\lambda^{-N}$-close to the center-stable manifold of a curve of fixed points.

\bprop\label{p.dist} Given $C_1>0, C_2>0, \lambda>1$,
there exist $\delta_0=\delta_0(C_1, C_2,  \lambda)$, $N_0=N_0(C_1, C_2,  \lambda,  \delta_0)\in
\mathbb{N}$, and $C^{**}>C^*>0$ such that for any $\delta\in (0, \delta_0)$, the
following holds.

Let $f:\mathbb{R}^3\to \mathbb{R}^3$ be a $C^2$-diffeomorphism
such that
\begin{itemize}
\item[{\rm (i)}] $\|f\|_{C^2}\le C_1$; \item[{\rm (ii)}] The
planes $\{z=0\}$ and $\{x=0\}$ are invariant under iterates of
$f$; \item[{\rm (iii)}]  Every point of the line $\{z=0, x=0\}$ is
a fixed point of $f$;

\item[{\rm (iv)}] At a point $Q\in \{z=0, x=0\}$ we have
$$
Df(Q)=\begin{pmatrix}
    \lambda^{-1} & 0 & 0 \\
    0 & 1 & 0 \\
    0 & 0 & \lambda \\
  \end{pmatrix}.
$$

 \item[{\rm (v)}]
$\|Df(p)-A\|<\delta$ for every $p\in \mathbb{R}^3$, where
$$
A=Df(Q)=\begin{pmatrix}
    \lambda^{-1} & 0 & 0 \\
    0 & 1 & 0 \\
    0 & 0 & \lambda \\
  \end{pmatrix}.
$$
\end{itemize}
Introduce the following cone fields in $\mathbb{R}^3$:
\begin{align}
\label{e.coneformula1} K_p^u & = \{\mathbf{v}\in T_p\mathbb{R}^3, \
\mathbf{v}=\mathbf{v}_{xy}+\mathbf{v}_z : |\mathbf{v}_z|\ge C_2
\sqrt{|z_p|}|\mathbf{v}_{xy}|\}, \\
\label{e.cuconeformula} K_p^{cu} & = \{\mathbf{v}\in
T_p\mathbb{R}^3, \ \mathbf{v}=\mathbf{v}_{x}+\mathbf{v}_{yz} :
|\mathbf{v}_x|<0.01\lambda^{-1} |\mathbf{v}_{yz}| \}, \\
\label{e.coneformulas} K_p^s & = \{\mathbf{v}\in T_p\mathbb{R}^3,
\ \mathbf{v}=\mathbf{v}_{x}+\mathbf{v}_{yz} : |\mathbf{v}_x|\ge
C_2 \sqrt{|x_p|}|\mathbf{v}_{yz}|\}, \\
\label{e.csconeformula} K_p^{cs} & = \{\mathbf{v}\in
T_p\mathbb{R}^3, \ \mathbf{v}=\mathbf{v}_{z}+\mathbf{v}_{xy} :
|\mathbf{v}_z|<0.01\lambda^{-1} |\mathbf{v}_{xy}| \}.
\end{align}

Suppose that for a finite orbit $p_0, p_1, p_3, \ldots, p_N$ we
have
$$
(p_0)_x\ge 1, \ (p_1)_x<1, \ (p_N)_z\ge 1, \ (p_{N-1})_z<1,
$$
and there are curves $\gamma_0$ and $\gamma_N$ such that
$\gamma_0$ connects $p_0$ with $W^{ss}(Q)$
and is tangent to both cone fields $K^u$ and $K^{cu}$, and
$\gamma_N$ connects $p_N$ with $W^{uu}(Q)$ and is tangent to both
cone fields $K^s$ and $K^{cs}$.

Then
$$
C^*\lambda^{-N}\le |(p_0)_z|\le C^{**}\lambda^{-N}, \ \ \text{\rm
and}
$$
$$
C^*\lambda^{-N}\le |(p_N)_x|\le C^{**}\lambda^{-N}.
$$
 \enprop
 \begin{proof}
 Consider an orthogonal from $p_0$ to the plane $\{z=0\}$, and denote its base by $p^*_0$. There is a unique point
 $Q_0$ on the line $\{z=0, x=0\}$ such that $p_0^*\in W^{ss}(Q_0)$. Denote the line segment connecting
 $p_0$ and $p_0^*$ by $\sigma_0$ and set $\sigma_i=f^i(\sigma_0), \ i=1, 2, \ldots, N$.

 Similarly, consider an orthogonal from $p_N$ to the plane $\{x=0\}$, and denote its base by $p^*_N$. There is a unique
 point $Q_N$ on the line $\{z=0, x=0\}$ such that $p_N^*\in W^{uu}(Q_N)$. Denote the line segment connecting
 $p_N$ and $p_N^*$ by $\rho_N$, and set $\rho_i=f^{-N+i}(\rho_N), \ i=0, 1, 2, \ldots, N-1$.

 We have
 $$
 0<|\sigma_0|<|\sigma_1|<\ldots<|\sigma_{N-1}|<|\sigma_N|, \ 1\le |\sigma_N|\le \lambda(1+\delta),
 $$
 $$
 0<|\rho_N|<|\rho_{N-1}|<\ldots<|\rho_{1}|<|\rho_0|, \ 1\le |\rho_0|\le \lambda(1+\delta).
 $$

 Denote $b_k=\text{\rm dist}(p_k, Q)$. Then we have
 \begin{align*}
(\lambda^{-1}-\min(\delta, C_1b_k))|\sigma_k| & \le
|\sigma_{k-1}|\le (\lambda^{-1}+\min(\delta, C_1b_k))|\sigma_k|, \
\ k= 1, 2, \ldots, N, \\
(\lambda^{-1}-\min(\delta, C_1b_k))|\rho_k| & \le |\rho_{k+1}|\le
(\lambda^{-1}+\min(\delta, C_1b_k))|\rho_k|, \ \ k= 0, 1, 2,
\ldots, N-1.
\end{align*}

Now we have
\begin{align*}
 b_k & =\text{\rm dist}(p_k, Q) \\
 & \le \text{\rm dist}(Q, Q_0)+|\rho_k|+\text{\rm dist}(Q, Q_N)+|\sigma_k| \\
 & \le |\sigma_k|+|\rho_k|+C_3C_2(\sqrt{|\rho_N|}+\sqrt{|\sigma_0|}),
\end{align*}
where $C_3$ does not depend on $N$. Indeed, the distance between
$p_0^*$ and $W^{ss}(Q)$ is bounded above by the length of the
curve $\gamma_0$, and since $\gamma_0$ is tangent to the cone
fields $K^u$ and $K^{cu}$, its length is not greater than
$C_2\sqrt{|\sigma_0|}$. On the other hand, $\text{\rm dist}(Q,
Q_0)$ is of the same order as that distance since the strong
stable manifolds of fixed points form a $C^1$-foliation of the
plane $\{z=0\}$. In the same way one
gets an estimate $\text{\rm dist}(Q, Q_N)\le
C_3C_2\sqrt{|\rho_N|}$.

Since we have the a priori estimates $|\sigma_k|\le
(1+\delta)(\lambda-\delta)^{-N+k}$ and $|\rho_k|\le
(1+\delta)(\lambda-\delta)^{-k}$, we also have
$$
b_k\le |\sigma_k|+|\rho_k|+2C_2(1+\delta)(\lambda-\delta)^{-N/2}.
$$
If $k<N/2$, then
\begin{align*}
b_k & \le
(\lambda-\delta)^{-k}(1+(\lambda-\delta)^{-N+2k}+2C(\lambda-\delta)^{-N/2+k}) \\
& \le C'(\lambda-\delta)^{-k}.
\end{align*}
If $k\ge N/2$, then
\begin{align*}
b_k & \le
(\lambda-\delta)^{-N+k}(1+(\lambda-\delta)^{N-2k}+2C(\lambda-\delta)^{N/2-k})
\\
& \le C'(\lambda-\delta)^{-N+k}.
\end{align*}
Therefore we have
\begin{align*}
|\sigma_0| & \le |\sigma_N|\prod_{k=1}^N(\lambda^{-1}+\min(\delta, C_1b_k)) \\
& \le \lambda(1+\delta) \lambda^{-N} \prod_{k=1}^{[N/2]}
(1+C_1C'(\lambda-\delta)^{-k}) \cdot
\prod_{k=[N/2]+1}^{N}(1+C_1C'(\lambda-\delta)^{N/2-k}) \\
& \le C^{**}\lambda^{-N}.
\end{align*}
Also,
\begin{align*}
|\sigma_0| & \ge |\sigma_N|\prod_{k=1}^N(\lambda^{-1}-\min(\delta,
C_1b_k)) \\
& \ge \lambda^{-N} \prod_{k=1}^{[N/2]}
(1-C_1C'(\lambda-\delta)^{-k}) \cdot \prod_{k=[N/2]+1}^{N}
(1-C_1C'(\lambda-\delta)^{N/2-k}) \\
& \ge C^{*}\lambda^{-N}.
\end{align*}
In the same way we get estimates for $\rho_N$.
\end{proof}

\bprop\label{p.vectors} Given $C_1>0, C_2>0, \lambda>1$,
there exist $\delta_0=\delta_0(C_1, C_2,  \lambda)$, $N_0\in
\mathbb{N}, N_0=N_0(C_1, C_2,  \lambda,  \delta_0)\in
\mathbb{N}$, and
$\widetilde{C}>0$ such that for any $\delta\in (0, \delta_0)$, the
following holds.

Under the conditions of and with the notation from
Proposition~\ref{p.dist}, suppose that $\mathbf{v}\in
T_{p_0}\mathbb{R}^3, \mathbf{v}\in K^u_{p_0}$. Then
$|Df_{p_0}^N(\mathbf{v})|\ge
\widetilde{C}\lambda^{N/2}|\mathbf{v}|$. \enprop

\begin{proof}
We will use the notation from Proposition \ref{p.dist} and its
proof.

Let us denote $\mathbf{v}_k=Df^k(\mathbf{v}), k=0, 1, \ldots, N,$
and $D_k=|(\mathbf{v}_k)_z|, d_k=|(\mathbf{v}_k)_{xy}|$. Let us
normalize $\mathbf{v}$ in such a way that $d_0=1$. Since
$\mathbf{v}\in K^u_{p_0}$ and $|\sigma_0| \ge C^{*}\lambda^{-N}$,
we have $D_0\ge C_5\lambda^{-N/2}$, where $C_5$ is independent of
$N$.

Denote
$$
Df(p)=\begin{pmatrix}
        \nu(p) & m_1(p) & t_1(p) \\
        m_2(p) & e(p) & t_2(p) \\
        s_1(p) & s_2(p) & \lambda(p) \\
      \end{pmatrix}.
$$
We have
$$
Df(p)(\mathbf{v})=\begin{pmatrix}
        \nu(p) & m_1(p) & t_1(p) \\
        m_2(p) & e(p) & t_2(p) \\
        s_1(p) & s_2(p) & \lambda(p) \\
      \end{pmatrix}\begin{pmatrix}
                     \mathbf{v}_x \\
                     \mathbf{v}_y \\
                     \mathbf{v}_z \\
                   \end{pmatrix}=\begin{pmatrix}
                     \nu(p)\mathbf{v}_x+m_1(p)\mathbf{v}_y+t_1(p)\mathbf{v}_z \\
                     m_2(p)\mathbf{v}_x+e(p)\mathbf{v}_y+t_2(p)\mathbf{v}_z \\
                     s_1(p)\mathbf{v}_x+s_2(p)\mathbf{v}_y+\lambda(p)\mathbf{v}_z \\
                   \end{pmatrix}.
$$
Since $\|f\|_{C^2}\le C_1$, we also have $|\nu(p)|\le \lambda^{-1}
+ C_1\mathrm{dist}(Q, p)$, $|m_1(p)|$, $|m_2(p)|$, $|t_1(p)|$,
$|t_2(p)| \le C_1 \mathrm{dist}(Q, p)$, and $|\lambda(p)|\ge
\lambda-C_1\mathrm{dist}(Q, p)$. Furthermore, if $p$ belongs to the plane
$\{z=0\}$, then $s_1(p)=s_2(p)=0$. Therefore, for arbitrary $p$,
we have $|s_1(p)|, |s_2(p)|\le C_1z_p$. This implies that we have
the following estimates:

\beq\label{e.recurrence}
\left\{
  \begin{array}{ll}
    d_{k+1}\le (1+\min(\delta, C_1b_k))d_k+\min(\delta, C_1b_k) D_k \\
    D_{k+1}\ge (\lambda-\min(\delta, C_1b_k))D_k- \min(\delta, C_1|\sigma_k|) d_k
  \end{array}
\right.
\eneq

\blm\label{l.kstar}
There exists $k^*$ such that $d_k\ge D_k$ for all $k\le k^*$, and $d_k<D_k$ for all $k>k^*$.
\elm
\begin{proof}
Indeed, if $D_k>d_k$, then
$$
d_{k+1} \le (1+\delta)d_k + \delta D_k \le (1+2\delta)D_k
$$
and
$$
D_{k+1}\ge (\lambda-\delta)D_k-\delta d_k\ge
(\lambda-2\delta)D_k.
$$
Since $\lambda-2\delta>1+2\delta$, we have $D_{k+1}>d_{k+1}$.
\end{proof}

We have the following preliminary estimates. If $k<N/2$, then
$b_k\le C'(\lambda-\delta)^{-k}$; if $k\ge N/2$, then $b_k\le
C'(\lambda-\delta)^{-N+k}$. Also, $|\sigma_k|\le
(\lambda-\delta)^{-N+k}$ for each $k=0, 1, \ldots, N$. Notice that
this implies that $\prod_{i=1}^N(1+C_1b_i)$ is bounded by a
constant that is independent of $N$. And, finally, $D_k\ge
C_5\lambda^{-N/2}(\lambda-\delta)^k$; see \cite[Lemma~6.1]{DG09a}.

If $D_k\le d_k$ (i.e., $k\le k^*$), then
\begin{align*}
d_{k+1} & \le (1+C_1b_k)d_k+C_1b_kD_k \\
& \le (1+2C_1b_k)d_k \\
& \le \left[\prod_{i=1}^k(1+2C_1b_k)\right]d_0 \\
& \le C_6,
\end{align*}
where $C_6$ does not depend on $k$ or $N$.
Moreover, we have
\begin{align*}
D_{k+1} & \ge (\lambda-C_1b_k)D_k - C_1 |\sigma_k|d_k \\
& \ge (\lambda-C_1b_k)D_k - C_1 C_6(\lambda-\delta)^{-N+k} \\
& \ge (\lambda - C_1 b_k) D_k \left(1 - \frac{C_1 C_6}{\lambda -
C_1b_k} \cdot \frac{(\lambda - \delta)^{-N+k}}{D_k}\right) \\
& \ge (\lambda - C_1 b_k) D_k \left(1 - \frac{C_1 C_6}{\lambda -
C_1 b_k} \cdot \frac{(\lambda - \delta)^{-N+k}}{C_5
\lambda^{-N/2}(\lambda -
\delta)^k}\right) \\
& \ge (\lambda - C_1 b_k) D_k \left(1 - \left(\frac{C_1
C_6}{C_5(\lambda - C_1 b_k)}\right)(\lambda^{1/2}(\lambda -
\delta)^{-1})^{N}\right) \\
& \ge \lambda^{k+1} D_0 \left[\prod_{i=0}^{k}(1 -
(C_1\lambda^{-1}) b_i)\right] \cdot \left(1 - \left(\frac{C_1
C_6}{C_5(\lambda - C_1
b_k)}\right)(\lambda^{1/2}(\lambda - \delta)^{-1})^{N}\right)^k \\
& \ge \lambda^{k+1} D_0 \left[\prod_{i=0}^{k}(1 -
(C_1\lambda^{-1}) b_i) \right] \cdot \left(1 - \left(\frac{C_1
C_6}{C_5(\lambda - C_1
b_k)}\right)(\lambda^{1/2}(\lambda - \delta)^{-1})^{N}\right)^N \\
& \ge C_7 \lambda^{k+1}D_0,
\end{align*}
since for any $C>0$ and $\xi\in(0,1)$, one has $\lim_{N\to
\infty}(1-C\xi^N)^N=1$.

If $d_k<D_k$ (i.e., $k>k^*$), then
\begin{align*}
D_{k+1} & \ge (\lambda-C_1b_k)D_k - C_1 |\sigma_k| d_k \\
& \ge (\lambda - C_1 b_k - C_1 |\sigma_k|)D_k \\
& \ge \lambda D_k(1 - \lambda^{-1} C_1 b_k - \lambda^{-1} C_1 |\sigma_k|) \\
& \ge \lambda^{k+1} C_7 D_0 \prod_{i=k^*}^{k}(1 - \lambda^{-1} C_1
b_k - \lambda^{-1} C_1 |\sigma_k|) \\
& \ge C_8\lambda^{k+1}D_0,
\end{align*}
where $C_8$ does not depend on $N$ or $k$.

Finally, $|Df_{p_0}^N(\mathbf{v})|\ge D_N\ge C_8\lambda^ND_0\ge
\widetilde{C}\lambda^{N/2}|\mathbf{v}|$.
\end{proof}

Below (in the proof of Proposition \ref{p.distances}) we will also need an estimate on $k^*$ provided by Lemma \ref{l.kstar}. Namely, we claim that $k^*$ cannot be much larger than $N/2$. The formal statement is the following.

\blm\label{l.onkstarboundabove}
There is a constant $C_9$ independent of $N$ such that
$$
\lambda^{k^*}\le C_9\lambda^{N/2}.
$$
\elm
\begin{proof}
We know that $D_{k^*-1}\le d_{k^*-1}\le C_6$. Therefore $C_6\ge
D_{k^*-1}\ge C_7\lambda^{k^*}D_0\ge C_7\lambda^{k^*}\cdot
C_5\lambda^{-N/2}$, so $\lambda^{k^*}\le
(C_6C_7^{-1}C_5^{-1})\lambda^{N/2}$.
\end{proof}

Now we are ready to formulate the statement that will be used to
check the distortion property of the trace map.

\bprop\label{p.distances} Given $C_1>0, C_2>0, C_3>0, \lambda>1$,
there exist $\delta_0=\delta_0(C_1, C_2, C_3, \lambda)$, $N_0=N_0(C_1, C_2, C_3, \lambda,  \delta_0)\in
\mathbb{N}$, and $C>0$ such that for
any $\delta\in (0, \delta_0)$ and any $\Delta>0$, 
the following holds.

Under the conditions of and with the notation from
Proposition~\ref{p.dist}, suppose that the curve $\gamma_0$ has a
curvature bounded by $C_3$. Suppose also that for the points
$p=(x_p, y_p, z_p)$ and $q=(x_q, y_q, z_q)$, the following holds:

\vspace{3pt}

1. $p, q\in \gamma_0$;

\vspace{3pt}

2. For some $N\ge N_0$ both $f^N(p)$ and $f^N(q)$  have
$z$-coordinates larger than 1, and both $f^{N-1}(p)$ and
$f^{N-1}(q)$ have $z$-coordinates not greater than 1;

\vspace{3pt}

3. $\text{\rm dist}(f^N(p), f^N(q))=\Delta$.

\vspace{3pt}

Denote $p_k=f^k(p)$, $q_k=f^k(q)$, $k=0, \ldots, N$. 

\vspace{3pt}

Let  $\mathbf{v}\in T_p\mathbb{R}^3$ and  $\mathbf{w}\in
T_q\mathbb{R}^3$ be vectors tangent to $\gamma_0$.

\vspace{3pt}

Denote $\mathbf{v}_k=Df^k(v)$ and $\mathbf{w}_k=Df^k(w)$, $k=0, \ldots, N$. Let
$\alpha_k$ be the angle between $\mathbf{v}_k$ and $\mathbf{w}_k$.

 \vspace{3pt}

 Then,
 \beq\label{e.sumsarebounded}
 \sum_{k=0}^{N}\alpha_k<C\Delta, \ and \ \ \sum_{k=0}^{N} \text{\rm dist}(p_k, q_k)<C\Delta.
 \eneq

 \enprop

\begin{proof}
First of all, notice that it is enough to prove
Proposition~\ref{p.distances} in the case when the points $p$ and
$q$ are arbitrarily close to each other. Indeed, otherwise split
the piece of the curve $\gamma_0$ between the points $p$ and $q$
into a large number of extremely small pieces. If for each of them
the statement of Proposition~\ref{p.distances} holds, then by
subadditivity of the inequalities \eqref{e.sumsarebounded} it
holds in general.

Denote by $\Gamma$ the piece of the curve $\gamma_0$ between the
points $p_0$ and $q_0$, and set $\Gamma_k=f^k(\Gamma)$, $k=0, 1,
2, \ldots, N$. Denote $\mu_k=|\Gamma_k|$. Due to the remark above
we can assume that for any vector tangent to $\Gamma$, the value
of $k^*$ is the same. From the proof of
Proposition~\ref{p.vectors} we see that for $k=0, 1, \ldots, k^*$,
we have $\mu_k\le C_6\mu_0$, and $\Delta\approx \mu_N\ge
\widetilde{C}\lambda^{N/2}\mu_0\ge
\widetilde{C}\lambda^{N/2}C_6^{-1} \mu_{k^*}$, so $\mu_{k^*}\le
(\widetilde{C}^{-1}C_6)\lambda^{-N/2}\mu_N\le
C_{11}\lambda^{-N/2}\Delta$.

On the other hand, if $k>k^*$, then
$$
\Delta\approx \mu_N\ge D_N\ge C_7\lambda^{N-k}D_{k^*}\ge
C_7\lambda^{N-k}\frac{1}{2}\mu_k,
$$
where we denote by $D_k$ the length of the projection of
$\Gamma_k$ to the $z$-axis (slightly abusing the notation). Therefore,
$\mu_k\le (2C_7^{-1})\lambda^{-N+k}\Delta$.

It follows that we have
\begin{align*}
\sum_{k=0}^{N} \text{\rm dist}(p_k, q_k) & \le \sum_{k=0}^{N}
\mu_k \\
& = \sum_{k=0}^{k^*} \mu_k +\sum_{k=k*+1}^{N} \mu_k \\
& \le k^*\cdot C_6 \mu_0 + \sum_{k=k^*+1}^{N} (2C_7^{-1})
\lambda^{-N+k} \Delta \\
& \le k^*\cdot \widetilde{C}^{-1} \lambda^{-N/2} C_6 \Delta +
C_{12} \Delta \\
& \le C\Delta.
\end{align*}
 Notice that for any two vectors $\mathbf{v}, \mathbf{w}\in K^{cu}$
 $$
 \angle(A\mathbf{v}, A\mathbf{w})\le \lambda \angle (\mathbf{v}, \mathbf{w}), \ \text{\rm where}\ \ A=\begin{pmatrix}
    \lambda^{-1} & 0 & 0 \\
    0 & 1 & 0 \\
    0 & 0 & \lambda \\
  \end{pmatrix},
 $$
 and if a linear operator $B$ is $\xi$-close to $A$, then
\begin{align*}
 \angle(B\mathbf{v}, B\mathbf{w}) & \le (\lambda+\xi)(1+\xi)\angle (\mathbf{v}, \mathbf{w}) \\
 & = (\lambda+\xi(\lambda+1+\xi))\angle (\mathbf{v}, \mathbf{w}) \\
 & < \lambda(1+2\xi)\angle (\mathbf{v}, \mathbf{w}).
\end{align*}
Therefore we have
$$
\alpha_0\le C_3\mu_0
$$
and
$$
\alpha_{k+1}\le \lambda(1+2C_1b_k)\alpha_k+C_1\mu_k, \ \ k=0,1,
\ldots, k^*.
$$
Since $\prod_{k=0}^{k^*}(1+2C_1b_k)\le C_{13}$ for some $C_{13}$
that is independent of $k^*$ and $N$, we have
$$
\alpha_k\le (\lambda^k+\lambda^{k-1}+\ldots +\lambda+1)\cdot
(C_{13}C_1C_6C_3)\mu_0\le C_{14}\lambda^{k}\mu_0,
$$
where $C_{14}$ is also independent of $k$ and $N$. In particular,
$$
\alpha_{k^*}\le C_{14}\lambda^{k^*}\mu_0\le
(C_{14}C_9)\lambda^{N/2}\widetilde{C}^{-1}\lambda^{-N/2}\Delta\le
C_{15}\Delta
$$
and
$$
\sum_{k=0}^{k^*}\alpha_k\le
\sum_{k=0}^{k^*}C_{14}\lambda^{k}\mu_0\le
C_{16}\lambda^{k^*}\mu_0\le C_{17}\Delta.
$$
Now denote
$$
K^{uu}_p=\{\mathbf{v}\in T_p\mathbb{R}^3 : |\mathbf{v}_z|>100
\lambda |\mathbf{v}_{xy}|\}.
$$
If $\mathbf{v}, \mathbf{w}\in K^{uu}$, then $\angle(A\mathbf{v},
A\mathbf{w})\le \lambda^{-1/2} \angle(\mathbf{v},\mathbf{w})$, and
the same holds for any linear operator $B$ which is $\delta$-close
to $A$.

There exists $m\in \mathbb{N}$ independent of $N$ such that if for
a vector $\mathbf{v}$ we have $|\mathbf{v}_z|>|\mathbf{v}_{xy}|$,
then $Df^m(\mathbf{v})\in K^{uu}$.

Also
$$
\alpha_{k^*+m}=\angle(Df^m(v_{k^*}), Df^m(w_{k^*}))\le
C_{15}(\lambda+2\delta)^m\Delta=C_{16}\Delta,
$$
and for $k\ge k^*+m$, we have $\alpha_{k+1}\le
\lambda^{-1/2}\alpha_k+C_1\mu_k$.

Denote $\nu=\lambda^{-1/3}-\lambda^{-1/2}$.

If $C_1\mu_k<\nu\alpha_k$, then
$$
\alpha_{k+1}\le
\lambda^{-1/2}\alpha_k+\nu\alpha_k=(\lambda^{-1/2}+\nu)\alpha_k=\lambda^{-1/3}\alpha_k.
$$

If $C_1\mu_k\ge \nu\alpha_k$, then
$$
\alpha_{k+1}\le \lambda^{-1/2}\alpha_k+C\mu_k\le
(\lambda^{-1/2}\nu^{-1}+1)C_1\mu_k.
$$
Since $\sum_{k=k^*+m}^N\mu_k\le \sum_{k=k^*+m}^N
(2C_7^{-1})\lambda^{-N+k}\Delta\le C_{12}\Delta$, this implies
that
$$
\sum_{k=k^*+m}^{N}\alpha_k\le C_{18}\Delta
$$
and
$$
\sum_{k=0}^{N}\alpha_k\le \sum_{k=0}^{k^*}\alpha_k
+\left(\alpha_{k^*+1}+\ldots
+\alpha_{k^*+m-1}\right)+\sum_{k=k^*+m}^{N}\alpha_k\le C\Delta,
$$
concluding the proof.
\end{proof}

\subsection{Distortion Property: Preliminary Estimates}\label{ss.distprem}

The main result of this subsection is the following statement:

\blm\label{l.prelim} There are constants $R>0$, $V_0>0$, and
$\kappa>0$ such that for any $V\in (0,V_0)$ and $N\in \mathbb{N}$,
the following holds. Suppose that $\gamma\subset
T^N(\mathbf{l}_V)\backslash O_{r_1}$ is a connected curve of
length not greater than $\kappa$. Let the points $p,q\in
\mathbf{l}_V$ be such that $T^N_V(p)\in \gamma$ and $T^N_V(q)\in
\gamma$, and $v_p$ and $v_q$ be unit vectors tangent to $\gamma$
at points $p$ and $q$. Then
$$
\sum_{i=0}^{N}\left(\angle(DT_V^i(v_p), DT_V^i(v_q))+\text{\rm dist}(T_V^i(p), T_V^i(q))\right)<R.
$$
\elm

Notice that $F^{-1}(\mathbb{S}\backslash O_{r_2})$ is a torus
without small neighborhoods of the preimages of the singularities, and we
can define the following map
\begin{equation}\label{e.Ttilde}
\widetilde{T}_V: F^{-1}(\mathbb{S}\backslash O_{r_2}) \to
\mathbb{T}^2, \ \ \ \widetilde{T}_V=F^{-1}\circ \pi_V\circ
T_V\circ \pi_V^{-1}\circ F\equiv F_V^{-1}\circ T_V\circ F_V.
\end{equation}

If $V$ is small, $\widetilde{T}_V$ is $C^2$-close to the linear
automorphism $\mathcal{A}$ on its domain.

\blm\label{l.angels} For $V_0>0$ small enough, there exists $t\in (0,1)$ such that
for $V \in [0,V_0]$, $p,q \in \mathbb{T}^2\backslash
F^{-1}(O_{r_2})$ and unit vectors $v_p\in K_p^u$, $v_q\in K_q^u$,
we have
$$
\angle (D\widetilde{T}_{V, p}(v_p), D\widetilde{T}_{V, q}(v_q))\le
t\angle(v_p, v_q)+2\|\widetilde{T}_V\|_{C^2}\text{\rm dist}(p,q).
$$
\elm
\begin{proof}
If $V_0$ is small, then $\widetilde{T}_V$ is $C^2$-close to the
linear automorphism $\mathcal{A}$. In particular, for any point
$p\in \mathbb{T}^2\backslash F^{-1}(O_{r_2})$ and any vectors $v_1,
v_2\in K^u_p$,
$$
\angle (D\widetilde{T}_{V, p}(v_1), D\widetilde{T}_{V, p}(v_2))\le
t\angle(v_1, v_2),
$$
where $t\in (0,1)$ can be chosen uniformly for all $V\in [0,V_0]$
and $p\in \mathbb{T}^2\backslash F^{-1}(O_{r_2})$. Therefore we
have
\begin{align*}
\angle (D\widetilde{T}_{V, p}(v_p), D\widetilde{T}_{V, q}(v_q)) & \le
\angle (D\widetilde{T}_{V, p}(v_p), D\widetilde{T}_{V, p}(v_q)) + \angle
(D\widetilde{T}_{V, p}(v_q), D \widetilde{T}_{V, q}(v_q)) \\
& \le t\angle(v_p, v_q)+2\|D\widetilde{T}_{V,p}(v_q)-D\widetilde{T}_{V,q}(v_q)\| \\
& \le t\angle(v_p, v_q)+2\|\widetilde{T}_V\|_{C^2}\text{\rm
dist}(p,q),
\end{align*}
as claimed.
\end{proof}

\bdef
For any points $p, q$ and any vectors $v_p, v_q$ define
\begin{equation}\label{e.ffunction}
\mathfrak{F}(p, q, v_p, v_q) \equiv \frac{\angle (v_p, v_q)}{\text{\rm dist}(p,q)}.
\end{equation}
\endef

\blm\label{l.inequality} For $p,q\in \mathbb{T}^2\backslash
F^{-1}(O_{r_2})$, $p\ne q$, and vectors $v_p\in K_p^u$, $v_q\in
K_q^u$, consider the function $\mathfrak{F}(p, q, v_p, v_q)$ defined by \eqref{e.ffunction}.
Suppose that $p$ and $q$ belong to a curve that is tangent to the
unstable cone field. Then
$$
\frak{F}(\widetilde{T}_V(p),\widetilde{T}_V(q), D\widetilde{T}_{V,
p}(v_p), D\widetilde{T}_{V, q}(v_q))\le t\mathfrak{F}(p, q, v_p,
v_q) +2\|\widetilde{T}_V\|_{C^2}.
$$
In particular, if $\mathfrak{F}(p, q, v_p, v_q) >
\frac{4\|\widetilde{T}_V\|_{C^2}}{1-t}$, then
$$
\frak{F}(\widetilde{T}_V(p),\widetilde{T}_V(q), D\widetilde{T}_{V,
p}(v_p), D\widetilde{T}_{V, q}(v_q))\le
\frac{1+t}{2}\mathfrak{F}(p, q, v_p, v_q).
$$
\elm
\begin{proof}
We have
\begin{align*}
\frak{F}(\widetilde{T}_V(p),\widetilde{T}_V(q), D\widetilde{T}_{V, p}(v_p),
D\widetilde{T}_{V, q}(v_q)) & =\frac{\angle(D\widetilde{T}_{V, p}(v_p),
D\widetilde{T}_{V, q}(v_q))}{\text{\rm dist}(\widetilde{T}_V(p),\widetilde{T}_V(q))} \\
& \le \frac{t\angle(v_p, v_q)+2\|\widetilde{T}_V\|_{C^2}\text{\rm dist}(p,q)}{\text{\rm dist}(p,q)} \\
& = t\mathfrak{F}(p, q, v_p, v_q) +2\|\widetilde{T}_V\|_{C^2}.
\end{align*}
If we also have $\mathfrak{F}(p, q, v_p, v_q) > \frac{4\|\widetilde{T}_V\|_{C^2}}{1-t}$, then
\begin{align*}
t\mathfrak{F}(p, q, v_p, v_q) + 2 \|\widetilde{T}_V\|_{C^2} & \le
t\mathfrak{F}(p, q, v_p, v_q) +\frac{1-t}{2}\mathfrak{F}(p, q, v_p, v_q) \\
& = \frac{1+t}{2}\mathfrak{F}(p, q, v_p, v_q).
\end{align*}
\end{proof}

Lemma~\ref{l.inequalities} immediately implies the following statement.

\blm\label{l.changeofcoordinates} Fix a small $V\ge 0$. Suppose
that $a, b \in \mathbb{T}^2\backslash F^{-1}(O_{r_2})$ are such
that $\Psi_V(a)$ and $\Psi_V(b)$ are defined, and $v_a\in
T_a\mathbb{T}^2\backslash F^{-1}(O_{r_2})$, $v_b\in
T_b\mathbb{T}^2\backslash F^{-1}(O_{r_2})$. Then,
$$
\frak{F}(a, b, v_a, v_b)\le \widetilde{C}^2(\frak{F}(\Psi_V(a),
\Psi_V(b), D\Psi_V(v_a), D\Psi_V(v_b))+1)
$$
and
$$
\frak{F}(\Psi_V(a), \Psi_V(b), D\Psi_V(v_a), D\Psi_V(v_b))\le \widetilde{C}^2(\frak{F}(a, b, v_a, v_b)+1).
$$
\elm

Since $\widetilde{T}_V$ is $C^2$-close to the linear automorphism
$\mathcal{A}$ on its domain for small $V\ge 0$, we can assume that
for all $V\in [0, V_0]$, we have $\|\widetilde{T}_V\|_{C^2}\le
10$.

Let $C$ be the constant from Proposition~\ref{p.distances}, where
$C_3=(\frac{40}{1-t}+1)\widetilde{C}^2$ was taken. Fix a small
$\tau>0$.  Take $n^*\in \mathbb{N}$ such that
\begin{equation}\label{e.nstarchoice}
\left(\frac{1+t}{2}\right)^{n^*}\widetilde{C}^2(C+1)\le
\frac{40}{1-t}, \ \ \text{\rm and}\ \ \
\widetilde{C}^2\mu^{n^*}\ge \mu^{n^*(1-\frac{\tau}{8})}.
\end{equation}

Now we are going to choose a neighborhood $U$ of the set of singularities $\{P_1, P_2, P_3, P_4\}$ in such a way that if an orbit of a point leaves $U$, then it does not enter $U$ for the next $n^*$ iterates. Also, we will choose a smaller neighborhood $U^*\subset U$ such that if a small curve is tangent to an unstable cone field (see Definition \ref{d.tangenttounstable}) and intersects $U^*$, then either it is entirely inside of $U$, or its iterates will continue to intersect $U$ until they reach the opposite rectangle. Here is how we do that.

For small $r_3\in (0, r_2)$, we denote $O_{r_3}^0=O_{r_3},
O_{r_3}^1=T(O_{r_3})\cap O_{r_1}$, $O_{r_3}^i=T(O_{r_3}^{i-1})\cap
O_{r_1}$ for each $i>1$, $O_{r_3}^i=T^{-1}(O_{r_3}^{i+1})\cap
O_{r_1}$ for each $i<0$, and
$U=\cup_{i=-\infty}^{\infty}(O_{r_3}^i)$. We will take $r_3$ so
small that the following property holds. If $p\in S_V$ is such
that $T^{-1}(p)\in U$ but $p\not \in U$, then $T^n(p)\not \in U$
for every $n \in \N$ with $n\le n^*$.

For small $r_4\in (0, r_3)$, we denote $O_{r_4}^0=O_{r_4},
O_{r_4}^1=T(O_{r_4})\cap O_{r_1}$, $O_{r_4}^i=T(O_{r_4}^{i-1})\cap
O_{r_1}$ for each $i>1$, $O_{r_4}^i=T^{-1}(O_{r_4}^{i+1})\cap
O_{r_1}$ for each $i<0$, and
\begin{equation}\label{e.ustarchoice}
U^*=\bigcup_{i=-\infty}^{\infty}(O_{r_4}^i).
\end{equation}
We will take $r_4$ so small that the following property holds.
Suppose $\gamma$ is a curve on $\mathbb{T}^2$ such that
$\Psi_V(\gamma)$ is defined, $F_V(\gamma)\subset O_{r_1}$,
$\gamma$ is tangent to the unstable cone field,  $F_V(\gamma)\cap
O_{r_1}\backslash U\ne \emptyset$, and $F_V(\gamma)\cap U^*\ne
\emptyset$. Then there is $k\in \mathbb{N}$ such that
$T_V^n(F_V(\gamma))\cap U\ne \emptyset$ for all natural $n\le k$,
and $T_V^k(F_V(\gamma))$ intersects the opposite rectangle of the
Markov partition.

\blm\label{l.vector} There are $V_0>0$ and $R_1>0$ such that the
following holds. Suppose that $v$ is a non-zero vector tangent to
the line $\mathbf{l}_V$ at some point $p\in \mathbf{l}_V$. Let
$N\in \mathbb{N}$ be such that $T_V(p)$ belongs to the bounded
component of $S_V\backslash O_{r_1}$. Then,
$$
\sum_{i=0}^{N}\|DT^i_V(v)\|\le R_1\|DT_V^N(v)\|.
$$
\elm

\begin{proof}
If $V$ is small enough, the vector $DT^{m_0}(v)$ is tangent to the
unstable cone field. Let us split the orbit $\{T^{m_0}(p),
T^{m_0+1}(p), \ldots, T^N(p)\}$ into several intervals
$$
 \{T^{m_0}(q), T^{m_0+1}(q), \ldots, T^{k_1-1}(q)\}, \{T^{k_1}(q),  \ldots, T^{k_2-1}(q)\}, \ldots, \{T^{k_s}(q), \ldots, T^N(q)\}
 $$
 in such a way that the following properties hold:
 \begin{enumerate}
 \item
 for each $  i=1, 2, \ldots, s$, the points $T^{k_i-1}(q)$ and $T^{k_i}(q)$ are outside of $O_{r_2}$;
 \item
 if $\{T^{k_i}(q),  \ldots, T^{k_{i+1}-1}(q)\}\cap U^*\ne \emptyset$, then $\{T^{k_i}(q),  \ldots, T^{k_{i+1}-1}(q)\}\subset U^*$;
 \item
 for each $  i=1, 2, \ldots, s-1$, we have either $k_{i+1}-k_i\ge n^*$ (where $n^*$ is chosen due to (\ref{e.nstarchoice})) or $\{T^{k_i}(q),  \ldots, T^{k_{i+1}-1}(q)\}\cap U^*\ne \emptyset$.
 \end{enumerate}
Such a splitting exists due to the choice of $U^*\subset U$ in \eqref{e.ustarchoice} above.

Apply Proposition \ref{p.distances}   to those intervals in the
splitting that are contained in $U^*$. The choice of $n^*$ in
\eqref{e.nstarchoice} above guarantees for the intervals that do
not intersect $U^*$ uniform expansion of the vector. The first and
the last interval may have length greater than $n^*$, and then we
have uniform expansion that "kills" the distortion added by the
change of coordinates, or smaller than $n^*$, but then they do not
add more than a constant to the sum. As a result, the required sum
is bounded above by a geometrical progression.
\end{proof}

Lemma \ref{l.vector} implies the following statement.

\blm\label{l.sum} There are constants $R_1>0$, $V_0>0$, and
$\kappa_1>0$ such that for any $V\in (0,V_0)$ and $N\in
\mathbb{N}$, the following holds. Suppose that $\gamma\subset
T^N(\mathbf{l}_V)\backslash O_{r_1}$ is a connected curve of
length not greater than $\kappa_1$. Let the points $p,q\in
\mathbf{l}_V$ be such that $T^N_V(p)\in \gamma$ and $T^N_V(q)\in
\gamma$. Then,
$$
\sum_{i=0}^{N}\text{\rm dist}(T_V^i(p), T_V^i(q))<R_1.
$$
\elm

Finally, the choice of $n^*$ and Proposition~\ref{p.distances}
imply that the function $\frak{F}(T^i_V(p), T^i_V(q), DT^i_V(v_p),
DT^i_V(v_q))$ is uniformly bounded, and together with
Lemma~\ref{l.sum} this proves Lemma~\ref{l.prelim}.

\subsection{Proof of the Distortion Property}\label{ss.proofdist}

\begin{proof}[Proof of Proposition \ref{p.distortion}]

Notice that we need to prove that
$$
\left|\log \left(\frac{|\Gamma_G||\gamma_B|}{|\Gamma_B||\gamma_G|}\right)\right|
$$
is bounded by some constant independent of the choice of the gap
and of $V$. There are points $p_G\in \gamma_G$ and $p_B\in
\gamma_B$ such that if $v_G$ is a unit  vector tangent to the
curve $\gamma_G$ at $p_G$, and $v_B$ is a unit vector tangent to
the curve $\gamma_B$ at $p_B$, then
\begin{align*}
\left|\log \left(\frac{|\Gamma_G||\gamma_B|}{|\Gamma_B||\gamma_G|}\right)\right| & =
\left|\log \left(\frac{|T_V^{n+2}(\gamma_G)||\gamma_B|}{|T_V^{n+2}(\gamma_B)||\gamma_G|}\right)\right| \\
& = \left|\log \left(\frac{|DT_V^{n+2}(v_G)|}{|DT_V^{n+2}(v_B)|}\right)\right| \\
& = \left|\sum_{i=0}^{n+1}\left(\log |DT_V|_{DT_V^i(v_G)}(T_V^i(p_G))| - \log |DT_V|_{DT_V^i(v_B)}(T_V^i(p_B))|\right)\right| \\
& \le \sum_{i=0}^{n+1}\left|\log |DT_V|_{DT_V^i(v_G)}(T_V^i(p_G))| - \log |DT_V|_{DT_V^i(v_B)}(T_V^i(p_B))|\right| \\
& \le \sum_{i=0}^{n+1}\left||DT_V|_{DT_V^i(v_G)}(T_V^i(p_G))| -  |DT_V|_{DT_V^i(v_B)}(T_V^i(p_B))|\right|
\end{align*}

We estimate each of the terms in this sum using

\blm\label{l.lastest} Suppose $f:\mathbb{R}^n\to \mathbb{R}^n$ is a smooth map, $a,
b\in \mathbb{R}^n$, and $v_a\in T_a\mathbb{R}^n, v_b\in
T_b\mathbb{R}^n$ are unit vectors. Then,
$$
\left||Df|_{v_a}(a)|-|Df|_{v_b}(b)|\right|\le \|f\|_{C^2}(\angle(v_a,v_b)+|a-b|).
$$
\elm
\begin{proof}[Proof of Lemma \ref{l.lastest}]
\begin{align*}
\left||Df|_{v_a}(a)|-|Df|_{v_b}(b)|\right| & \le \left||Df|_{v_a}(a)|-|Df|_{v_b}(a)|\right| + \left||Df|_{v_b}(a)|-|Df|_{v_b}(b)|\right| \\
& \le \|Df(a)\|\cdot |v_a-v_b|+\|f\|_{C^2}\cdot |a-b| \\
& \le \|f\|_{C^2}(|v_a-v_b|+|a-b|) \\
& \le \|f\|_{C^2}(\angle(v_a,v_b)+|a-b|).
\end{align*}
\end{proof}

Now,  Proposition \ref{p.distortion} follows from Lemma \ref{l.prelim}.
\end{proof}

\section{The Integrated Density of States}\label{s.ids}

\subsection{Definition and Basic Properties}

Recall the definition of $N(E,V)$ given in \eqref{e.idsdef},
$$
N(E,V) = \lim_{n \to \infty} \frac{1}{n} N(E, \omega , V , [1,n] ).
$$

\begin{Prop}[Hof, see \cite{Ho}]
For every $(E,V) \in \R^2$, the limit in \eqref{e.idsdef} exists
for every $\omega \in \T$ and its value does not depend on it.
\end{Prop}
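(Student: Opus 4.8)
The statement to prove is the existence of the integrated density of states limit
$$
N(E,V) = \lim_{n \to \infty} \frac{1}{n} N(E, \omega , V , [1,n] )
$$
and the fact that this limit is independent of $\omega \in \T$. This is a classical result (due to Hof) and the plan is to deduce it from the ergodic theory of the underlying dynamical system together with the subadditive structure of eigenvalue counting functions.

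\textbf{Plan.} First I would observe that the potential $v_\omega(n) = V \chi_{[1-\alpha,1)}(n\alpha + \omega \!\!\mod 1)$ is generated by the ergodic (in fact uniquely ergodic and minimal) dynamical system $(\T, R_\alpha, \mathrm{Leb})$, where $R_\alpha$ is rotation by $\alpha$; so $(H_{V,\omega})_{\omega \in \T}$ is an ergodic family of bounded self-adjoint operators. Next I would recall that for Dirichlet restrictions to adjacent intervals, the eigenvalue counting functions behave almost additively: if $\Lambda_1$ and $\Lambda_2$ are disjoint adjacent intervals with union $\Lambda$, then
$$
\bigl| N(E,\omega,V,\Lambda) - N(E,\omega,V,\Lambda_1) - N(E,\omega,V,\Lambda_2) \bigr| \le 1,
$$
since decoupling at one site (passing from $H^\Lambda$ to $H^{\Lambda_1} \oplus H^{\Lambda_2}$) is a rank-one (or at most rank-two) perturbation, and rank-$k$ perturbations move eigenvalue counts by at most $k$ (min-max / eigenvalue interlacing). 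Consequently the functions $n \mapsto N(E,\omega,V,[1,n])$ form an almost-subadditive (and almost-superadditive) cocycle over the rotation: writing $a_n(\omega) = N(E,\omega,V,[1,n])$, one has $|a_{m+n}(\omega) - a_m(\omega) - a_n(R_\alpha^m \omega)| \le 1$.

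Then I would apply the Kingman subadditive ergodic theorem (or, given the two-sided almost-additivity, simply the Birkhoff ergodic theorem after replacing $a_n$ by an exactly additive cocycle up to a bounded error) to conclude that $\frac1n a_n(\omega)$ converges for Lebesgue-a.e.\ $\omega$ to a constant (constancy because $R_\alpha$ is ergodic). To upgrade a.e.\ convergence to convergence for \emph{every} $\omega$ with an $\omega$-independent limit, I would invoke unique ergodicity of the irrational rotation: since $\chi_{[1-\alpha,1)}$ has at most two discontinuity points, the relevant ergodic averages controlling $\frac1n a_n(\omega)$ converge uniformly in $\omega$ (one can either work directly with uniform convergence of Birkhoff sums of Riemann-integrable observables under unique ergodicity, or approximate $\chi_{[1-\alpha,1)}$ from above and below by continuous functions and use a sandwiching argument together with monotonicity of $E \mapsto N(E,\cdot)$). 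This gives existence of the limit for all $\omega$ and its independence of $\omega$, which is exactly the claim.

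\textbf{Main obstacle.} The only genuinely delicate point is passing from a.e.\ $\omega$ (Kingman) to \emph{all} $\omega$ with a uniform limit; the subadditive ergodic theorem by itself does not give this. The resolution is the unique ergodicity of $R_\alpha$, but one must handle the discontinuity of the characteristic function $\chi_{[1-\alpha,1)}$ — this is routine since there are only finitely many discontinuities and $E \mapsto N(E,V)$ is monotone, so continuous over/under-approximations pinch the limit — but it is the step that requires the most care. Everything else (the rank-one decoupling bound, eigenvalue interlacing, ergodicity giving constancy) is standard. Since the paper only needs to \emph{cite} this proposition (Hof, \cite{Ho}) and use it later, a brief indication along these lines, or simply a reference to \cite{Ho}, suffices.
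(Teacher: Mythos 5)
The paper does not supply its own proof of this proposition; it simply cites Hof \cite{Ho}, and your outline is essentially Hof's argument (almost-additivity of eigenvalue counting functions via rank-bounded decoupling, together with unique ergodicity of the irrational rotation to get convergence for \emph{every} $\omega$ rather than almost every $\omega$). One small point to tighten: the last step is not literally a matter of uniform Birkhoff averages of the indicator $\chi_{[1-\alpha,1)}$, since $N(E,\omega,V,[1,n])$ is not a Birkhoff sum of any fixed observable — it depends nonlinearly on the whole finite potential string, not merely on its time average. The clean mechanism is the uniform (sub)additive ergodic theorem for almost-additive cocycles over a uniquely ergodic system (which is precisely what Hof establishes); alternatively, your sandwiching idea works if you compare the counting functions for two potentials that differ at a sparse set of sites and again invoke the rank-perturbation bound, but that deserves to be said explicitly rather than phrased as approximating $\chi_{[1-\alpha,1)}$ by continuous functions and appealing to Birkhoff. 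With that clarification, the proposal is correct and follows the same route as the cited reference.
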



The following proposition collects some well-known properties of
the integrated density of states.

\begin{Prop}
{\rm (a)} The map
$$
\R \times \R \ni (E,V) \mapsto N(E,V) \in [0,1]
$$
is continuous.
\\[1mm]
{\rm (b)} For every $V \in \R$, there is a Borel measure on $\R$,
called the density of states measure and denoted by $dN_V$, such
that
$$
N(E,V) = \int_\R \chi_{(-\infty,E]} \, dN_V.
$$
{\rm (c)} The topological support of the measure $dN_V$ is equal to $\Sigma_V$.
\\[1mm]
{\rm (d)} The density of states measure is the $\omega$-average of
the spectral measure associated with $H_{V , \omega}$ and the
vector $\delta_0 \in \ell^2(\Z)$. That is, for every $V \in \R$
and every bounded measurable $g : \R \to \R$,
\begin{equation}\label{e.idsformula}
\int_\R g \, dN_V = \int_{\omega \in \T} \langle \delta_0 , g(H_{V , \omega}) \delta_0 \rangle \, d\omega.
\end{equation}
{\rm (e)} We have
$$
N(E,0) = \begin{cases} 0 & E \le -2 \\ \frac{1}{\pi} \arccos \left( - \frac{E}{2} \right) & -2 < E < 2 \\ 1 & E \ge 2. \end{cases}
$$
\end{Prop}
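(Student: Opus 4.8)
The plan is to prove the five items in the order (b), (d), (c), (a), (e), since each relies on the previous ones. For (b) and (d) I would fix $V$ and carry out the standard ergodic identification of the density of states. The restriction $H_{V,\omega}^{[1,n]}$ has $n$ real eigenvalues, so for any monomial $g(x)=x^m$ one has $\tfrac1n \mathrm{Tr}\, g(H_{V,\omega}^{[1,n]}) = \tfrac1n\sum_{k=1}^n \langle \delta_k, g(H_{V,\omega})\delta_k\rangle + O(1/n)$, the error being a boundary contribution coming from at most $2m$ sites near the endpoints of $[1,n]$; by the Weierstrass theorem and $\|H_{V,\omega}\| \le 2+|V|$ this extends to every bounded continuous $g$. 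The covariance relation $U^{-1}H_{V,\omega}U = H_{V,\omega+\alpha}$ for the shift $U$ gives $\langle \delta_k, g(H_{V,\omega})\delta_k\rangle = \varphi_g(\omega + k\alpha)$, where $\varphi_g(\omega) := \langle \delta_0, g(H_{V,\omega})\delta_0\rangle$ is continuous on $\T$; unique ergodicity of the rotation by the irrational $\alpha$ then forces $\tfrac1n\sum_{k=1}^n \varphi_g(\omega+k\alpha) \to \int_\T \varphi_g\,d\omega$, uniformly in $\omega$. Approximating $\chi_{(-\infty,E]}$ from above and below by continuous functions and invoking the existence of the limit in \eqref{e.idsdef} (the preceding proposition) identifies $N(E,V)$ with $\int_\R \chi_{(-\infty,E]}\,dN_V$, where $dN_V$ is the Borel probability measure determined by $\int_\R g\,dN_V := \int_\T \varphi_g\,d\omega = \int_\T \langle \delta_0, g(H_{V,\omega})\delta_0\rangle\,d\omega$. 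This gives (b) and (d) at once.

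For (c), if $E \notin \Sigma_V$ choose an open interval $I \ni E$ disjoint from $\sigma(H_{V,\omega}) = \Sigma_V$; then $\chi_I(H_{V,\omega}) = 0$ and (d) gives $dN_V(I) = 0$, so $E \notin \mathrm{supp}\,dN_V$. Conversely, if $dN_V(I) = 0$ for some open interval $I$, then $\langle \delta_0, \chi_I(H_{V,\omega})\delta_0\rangle = 0$ for $\omega$-a.e., hence by covariance $\langle \delta_k, \chi_I(H_{V,\omega})\delta_k\rangle = 0$ for every $k$ and a.e.\ $\omega$, so $\chi_I(H_{V,\omega}) = 0$ and $I \cap \Sigma_V = \emptyset$. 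Thus $\mathrm{supp}\,dN_V = \Sigma_V$.

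For (a) I would first note that $\|H_{V,\omega} - H_{V',\omega}\| = |V - V'|$ for every $\omega$, so for bounded continuous $g$ one has $\|g(H_{V,\omega}) - g(H_{V',\omega})\| \to 0$ uniformly in $\omega$ as $V' \to V$, whence $\int g\,dN_{V'} \to \int g\,dN_V$; that is, $dN_{V'} \to dN_V$ weakly as $V'\to V$. Next, $dN_V$ has no atoms: for $V = 0$ this is visible from (e), and for $V \ne 0$ an atom of positive mass at $E_0$ would, by (d) and covariance, make $E_0$ an eigenvalue of $H_{V,\omega}$ for a positive-measure (hence, by ergodicity, full-measure) set of $\omega$, contradicting the purely singular continuous, in particular eigenvalue-free, spectrum of the Fibonacci Hamiltonian established in \cite{S89}. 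Therefore each $N(\cdot,V)$ is continuous, and weak convergence $dN_{V'} \to dN_V$ with continuous limiting distribution function upgrades to locally uniform convergence of the distribution functions (P\'olya's theorem); combined with continuity in $E$ this yields joint continuity of $(E,V) \mapsto N(E,V)$.

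Finally, (e) is a direct computation: the discrete Fourier transform conjugates $H_0$ to multiplication by $2\cos\theta$ on $L^2(\T, \tfrac{d\theta}{2\pi})$ with $\delta_0 \mapsto 1$, so by (d) (here $\varphi_g$ is $\omega$-independent) $dN_0$ is the push-forward of $\tfrac{d\theta}{2\pi}$ under $\theta \mapsto 2\cos\theta$, i.e.\ the measure with density $\tfrac1{\pi\sqrt{4-E^2}}$ on $(-2,2)$; integrating, $N(E,0) = \int_{-2}^E \tfrac{dt}{\pi\sqrt{4-t^2}} = \tfrac1\pi\big(\arcsin\tfrac{E}{2} + \tfrac{\pi}{2}\big) = \tfrac1\pi\arccos\!\big(-\tfrac{E}{2}\big)$ for $-2 < E < 2$, with $N(E,0) = 0$ for $E \le -2$ and $N(E,0) = 1$ for $E \ge 2$. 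The one genuinely nonelementary ingredient is the absence of eigenvalues for the weakly coupled Fibonacci Hamiltonian used in (a): everything else is the standard toolkit for ergodic Schr\"odinger operators, but continuity of $N(\cdot,V)$ in the energy really does require that separately established spectral fact.
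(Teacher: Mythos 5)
Your proposal is correct in substance but takes a noticeably different route from the paper, which proves (a)--(e) entirely by citation (to Avron--Simon for (a), to CFKS \S 9.2 for (c)--(d), and to Last--Simon for (e)), and obtains (b) as an immediate consequence of (a) plus monotonicity. The one place where your route genuinely diverges in content, rather than merely in level of detail, is (a): you derive the no-atom property of $dN_V$ from the absence of eigenvalues for the Fibonacci Hamiltonian (citing S\"ut\H{o}), which is a deep, model-specific spectral-theoretic fact, whereas the Avron--Simon argument the paper invokes is soft and general: if $dN_V$ had an atom at $E_0$ of mass $c>0$, then by \eqref{e.idsformula} and covariance $\int_\T \mathrm{Tr}\,\chi_{\{E_0\}}(H_{V,\omega})\,d\omega = \sum_{k\in\Z}\int_\T\langle\delta_k,\chi_{\{E_0\}}(H_{V,\omega})\delta_k\rangle\,d\omega$ would be infinite, while the left side is at most $2$ because eigenvalues of a whole-line Jacobi operator have multiplicity at most two. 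That argument works for every ergodic one-dimensional Schr\"odinger operator, needs no spectral-type input, and also fixes a small logical wrinkle in your write-up: as stated, the squeeze in (b) only gives $dN_V((-\infty,E)) \le N(E,V) \le dN_V((-\infty,E])$, so the identification of $N(E,V)$ with the distribution function at $E$ secretly needs the no-atom fact, which you only establish later in (a). (There is also a side issue that your appeal to S\"ut\H{o} covers $V>0$ but the proposition is stated for all $V\in\R$, and for a.e.\ $\omega$ rather than all $\omega$; the multiplicity argument sidesteps this too.) Your steps for (c), (d), and (e) are the standard ones and are fine; the paper simply outsources them to the literature.
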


\begin{proof}
(a) This follows from (the proof of) Lemma~3.1 and Theorem~3.2 in \cite{AS}.\\
(b) For every $V \in \R$, the map
$$
\R \ni E \mapsto N(E,V) \in [0,1]
$$
is continuous by (a) and non-decreasing by construction, and hence it is the distribution function of a Borel measure on $\R$.\\
(c) and (d) See \cite[Section~9.2]{CFKS}.\\
(e) This is folklore; see, for example, \cite[Theorem~1.1]{LS} and
its discussion there for a simple derivation.
\end{proof}

\subsection{Complete Gap Labeling}\label{ss.cgl}

Here we prove the following result, which implies
Theorem~\ref{t.completegaplabeling} since the transversality
assumption holds for $V_0 > 0$ sufficiently small.

\begin{Thm}\label{t.completegaplabeling2}
Suppose $V_0 > 0$ is such that for every $V \in (0,V_0]$ and every
point in $\Omega_V$, its stable manifold intersects $\ell_V$
transversally. Then, for every $V \in (0,V_0]$, all gaps allowed
by the gap labeling theorem are open. That is,
$$
\{ N(E,V) : E \in \R \setminus \Sigma_V \} = \{ \{ k \alpha \} : k \in \Z \} \cup \{ 1 \}.
$$
\end{Thm}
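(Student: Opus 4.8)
Throughout, $\subseteq$ in \eqref{f.completelabeling} is precisely the gap labeling theorem \eqref{f.fibgaplabels}, so the whole point is to prove the reverse inclusion: every value $\{m\alpha\}$, $m\in\Z$, together with $1$, must occur as $N(E,V)$ for some $E\in\R\setminus\Sigma_V$. The values $0=\{0\cdot\alpha\}$ and $1$ are attained on the two unbounded components of $\R\setminus\Sigma_V$ (where $N$ is $0$ and $1$, respectively), so I would fix $m\in\Z\setminus\{0\}$ and aim to exhibit a \emph{bounded} gap of $\Sigma_V$ on which the integrated density of states equals $\{m\alpha\}$.

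The first step is to locate, at $V=0$, the energy where this gap should open. Since $E\mapsto N(E,0)=\frac1\pi\arccos(-E/2)$ is an increasing homeomorphism of $[-2,2]$ onto $[0,1]$, there is a unique $E^\ast\in(-2,2)$ with $N(E^\ast,0)=\{m\alpha\}$. Writing $(\tfrac{E^\ast}{2},\tfrac{E^\ast}{2},1)\in\ell_0\cap\mathbb S$ as $F(\theta^\ast,0)$ via the semiconjugacy of Subsection~\ref{s.vequzero}, one gets $E^\ast=2\cos 2\pi\theta^\ast$ and hence $\theta^\ast=\tfrac12\{-m\alpha\}\in(0,\tfrac12)$. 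A short computation with the stable eigendirection of $A=\left(\begin{smallmatrix}1&1\\1&0\end{smallmatrix}\right)$ (whose slope is $-\alpha$) then shows that, for \emph{every} $m$, the point $(\theta^\ast,0)$ lies on the $\mathcal A$-stable manifold of one of the four periodic points $(0,0),(\tfrac12,0),(0,\tfrac12),(\tfrac12,\tfrac12)$ of $\T^2$, i.e.\ of one of the points that $F$ collapses onto the singularities $P_1,\dots,P_4$. By Theorem~\ref{spectrum} and the semiconjugacy, the forward $T_0$-orbit of $(\tfrac{E^\ast}{2},\tfrac{E^\ast}{2},1)$ therefore converges to one of the singularities.

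Next I would upgrade this to a genuine gap for small $V>0$. As recorded in Subsection~\ref{ss.gapsizes}, for $V\neq0$ the relevant singularity $P_i$ is replaced by two nearby hyperbolic periodic points $\mathbf p_V,\mathbf q_V$ of $T_V^6$ with $\mathrm{dist}(\mathbf p_V,\mathbf q_V)\asymp V$, whose strong stable manifolds form part of the stable boundary of the Markov partition near $P_i$. Since the $T_0$-orbit of $(\tfrac{E^\ast}{2},\tfrac{E^\ast}{2},1)$ enters a small neighbourhood $O_{r_0}(P_i)$ after finitely many steps, say $n_0$, and stays there, and since $\ell_V$ is transversal to the relevant (strong) stable manifolds for $V$ small (the input of Theorem~\ref{Casdagli}, the transversality discussion opening Section~\ref{s.saas}, and the normally hyperbolic coordinates of Subsection~\ref{s.s04}), the curve $T_V^{n_0}(\ell_V)$ crosses $W^{ss}(\mathbf p_V)$ and $W^{ss}(\mathbf q_V)$ transversally at two distinct, nearby points; pulling these back by $T_V^{n_0}$ gives the endpoints of a nonempty bounded gap $U_V$ of $\Sigma_V$, with both endpoints converging to $E^\ast$ as $V\to0$.

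It remains to identify the label of $U_V$. The gap labeling theorem attaches to $U_V$ an integer $k(V)$ with $N(\cdot,V)|_{U_V}=\{k(V)\alpha\}$, and — this being the usual $K$-theoretic/Schwartzman invariant — the integer $k(V)$ is locally constant on the open set $\{(E,V):V>0,\ E\notin\Sigma_V\}$. By joint continuity of $N$ and $E^\ast\in(-2,2)$, $\lim_{V\to0^+}N(\cdot,V)|_{U_V}=N(E^\ast,0)=\{m\alpha\}$, so $k(V)=m$ for $V$ small; and structural stability of the locally maximal hyperbolic sets $\Omega_V$, together with the standing transversality hypothesis on the connected parameter interval $(0,V_0]$, shows that $U_V$ persists as a nonempty bounded gap for all $V\in(0,V_0]$ with the same locally constant label, whence $N(\cdot,V)|_{U_V}=\{m\alpha\}$ for every $V\in(0,V_0]$. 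Letting $m$ range over $\Z\setminus\{0\}$ yields $\supseteq$. The step I expect to be the main obstacle is the third one: verifying that an honest, \emph{nonempty} gap really opens near $E^\ast$ — i.e.\ that the finite iterate of $\ell_V$ genuinely straddles $W^{ss}(\mathbf p_V)$ and $W^{ss}(\mathbf q_V)$ rather than grazing past them — which is exactly where the transversality of $\ell_V$ and the normally hyperbolic analysis of Section~\ref{s.saas} must be invoked with care, along with its propagation across the whole interval $(0,V_0]$.
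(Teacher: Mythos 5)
Your argument follows the paper's own proof: identify the energies where $N(E,0)=\{m\alpha\}$ with the intersections of $\ell_0$ with stable manifolds of the trace-map singularities (equivalently, of the four $\mathcal A$-periodic points on $\mathbb T^2$ with $\{\varphi=0\}$), use the splitting of each singularity into two nearby periodic points whose strong stable manifolds bound an interval of order $V$ on $\ell_V$ to produce a gap, and transfer the label by continuity and local constancy of $N$. The step you flag as the possible obstacle is exactly what the paper addresses in one sentence: every point of $S_V$ lying strictly between $W^{ss}(\mathbf p_V)$ and $W^{ss}(\mathbf q_V)$ has an unbounded forward semiorbit, since that strip is a complementary ``gap'' of the Markov partition $R_V$ built from these strong stable (and strong unstable) manifolds, so the interval on $\ell_V$ between the two transversal crossings is genuinely disjoint from $W^s(\Omega_V)$ and hence from $\Sigma_V$.
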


\begin{proof}
Consider the preimages of the singularities of the trace map $F^{-1}(P_i),
i=1, 2, 3, 4,$ on the torus. They form a set of 4 periodic points
of the hyperbolic automorphism $\mathcal{A}:\mathbb{T}^2\to
\mathbb{T}^2$, and the stable manifolds of those periodic points
intersect the line $\{\phi=0\}$ transversally at the points
$\{k\alpha\ (\mod 1\ )\}$,  $\{k\alpha+\frac{1}{2} \ (\mod 1\
)\}$,  $\{k\alpha +\frac{\alpha}{2}\ (\mod 1\ )\}$, and $\{k\alpha
+\frac{1}{2}+\frac{\alpha}{2}\ (\mod 1\ )\}$.

The images of these points under the semiconjugacy $F$ form the
set of points on $\ell_0$ of the form $(\pm\cos(\pi m\alpha),
\pm\cos(\pi m\alpha), 1 )$, and they correspond to the energies
$E\in \{\pm\cos(\pi m\alpha), m\in \mathbb{Z}\}$. The integrated
density of states for the free Laplacian, $N(E,0)$, takes the values
$\{ \{ m\alpha \} : m\in \mathbb{Z}\}$ at these energies.

After we increase the value of the coupling constant, each
singularity splits into two periodic points, and each
of the stable manifolds of the singularities splits into two strong
stable manifolds of the periodic points. Every point between the
stable manifolds has an unbounded positive semiorbit, and
therefore the interval that those manifolds cut in the line
$\ell_V$ corresponds to a gap in the spectrum.

Due to the continuous dependence of $N(E,V)$ on the coupling
constant and the local constancy of $N(\cdot,V)$  in the
complement of $\Sigma_V$, the integrated density of states takes
the same value in the formed gap as at the energy that corresponds
to the initial point of intersection of the stable manifold of
singularity with $\ell_0$.
\end{proof}

\subsection{More on the Asymptotic Gap Lengths}\label{ss.agl}

\begin{proof}[Proof of Theorem~\ref{t.gaplimit}.]

Fix any $m\in \mathbb{Z}\backslash \{0\}$. The integrated density
of states of the free Laplacian takes the values $\{\pm m\alpha\}$ at
the energies $\{\pm \cos \pi m \alpha\}$.

If $m=2k$, then these energies correspond to points with
$\theta$-coordinates $k\alpha (\mod 1)$ and $k\alpha
+\frac{1}{2}(\mod 1)$ on $F^{-1}(\ell_0)$.

If $m=2k+1$, then these energies correspond to points with
$\theta$-coordinates $k\alpha+\frac{\alpha}{2} (\mod 1)$ and
$k\alpha +\frac{\alpha}{2}+\frac{1}{2}(\mod 1)$ on
$F^{-1}(\ell_0)$.

Take one of these points, $Q_k\in F^{-1}(\ell_0)$. Let $P^*$ be the
singularity such that $Q_k\in F^{-1}(W^{ss}(P^*))$. Denote
$\Gamma=F^{-1}(W^{ss}_1(P^*))$. Let $M\in \mathbb{N}$ be the
smallest number such that $\mathcal{A}^{-M}(\Gamma)$ contains $Q_k$.

Then $\mathcal{A}^{M}(F^{-1}(\ell_0))$ intersects $\Gamma$ at some point
$Z$, and the distance from $Z$ to the set of singularities is
uniformly (in $|m|$) bounded from zero.

Denote by $P'(V)$ and $P''(V)$ the periodic points on $S_V$ near
$P^*$ (of period $2$ or of period $6$, depending on $P^*$).
Denote by $\Gamma'=F_V^{-1}(W_1^{ss}(P'(V))$ and
$\Gamma''=F_V^{-1}(W_1^{ss}(P''(V))$.

Then $\widetilde{T}_V^{M}(F_V^{-1}(\ell_V))$ (recall that $\widetilde{T}_V$ was defined by (\ref{e.Ttilde})) intersects $\Gamma'$
and $\Gamma''$ transversally near the point $Z$ and the curves
$\Gamma'$ and $\Gamma''$ cut in
$\widetilde{T}_V^{M}(F_V^{-1}(l_V))$ an interval $I_V$ whose
length is of order $V$. In other words, $\lim_{V\to
0}\frac{|I_V|}{|V|}$ exists and is uniformly bounded from zero and
from above.

We also have
$$
\lim_{V\to
0}\frac{\widetilde{T}_V^{-M}(I_V)}{|I_V|}=\left|D\mathcal{A}^M|_{F^{-1}(l_0)}\right|^{-1}=\alpha^M\cdot
C(Q_k),
$$
where $C(Q_k)$ is bounded from zero and from above.

Notice that if $\mathcal{A}^{-M}(\Gamma)$ intersects $F^{-1}(\ell_0)$
at $Q_k$, then $\mathcal{A}^{-M}(\Gamma)$ must be of length at least
$|k|\alpha^{-1}$. On the other hand,
$|\mathcal{A}^{-M}(\Gamma)|=\alpha^{-M}|\Gamma|$. Therefore,
$\alpha^{-M}|\Gamma|\ge |k|\alpha^{-1}$. Hence
$$
\lim_{V\to 0}\frac{|\widetilde{T}_V^{-M}(I_V)|}{|V|}=\lim_{V\to
0}\frac{|\widetilde{T}_V^{-M}(I_V)|}{|I_V|}\frac{|I_V|}{|V|}=\alpha^MC(Q_k)\lim_{V\to
0}\frac{|I_V|}{|V|}\le \overline{C}(Q_k)\frac{1}{|k|}.
$$
On the other hand,
$$
\lim_{V\to
0}\frac{|U_m(V)|}{|\widetilde{T}_V^{-M}(I_V)|}=|DF|_{F^{-1}(l_0)}(Q_k)|
$$
is bounded from above (since $\|DF\|$ is bounded). Therefore we have
$$
\lim_{V\to 0}\frac{|U_m{V}|}{|V|}=\lim_{V\to
0}\frac{|\widetilde{T}_V^{-M}(I_V)|}{|V|}\frac{|U_m(V)|}{|\widetilde{T}_V^{-M}(I_V)|}\le
\overline{\overline{C}}(Q_k)\frac{1}{|k|},
$$
where $\overline{\overline{C}}(Q_k)$ is uniformly bounded from above.
\end{proof}

\brm Notice that one can actually claim a bit more. Namely, for
those gaps that appear away from the endpoints of the free spectrum, the
corresponding constant $C_m$ is uniformly bounded away from zero.
This follows from the fact that away from the singularities, the
differential of $F$ has norm which is bounded away from zero. \erm

\section{Spectral Measures and Transport Exponents}\label{s.smte}

\subsection{Solution Estimates}

In this subsection we study solutions to the difference equation and
prove upper and lower bounds for them. Results of this kind are
known (we provide the references below), but our purpose here is to obtain explicit quantitative
estimates, as functions of the coupling constant, as they enter
explicitly in the bounds on fractal dimensions of spectral
measures and transport exponents and we wish to prove the best
bounds possible for the latter quantities to get an idea about
their behavior at weak coupling. These applications will be
discussed in the next two subsections.

Denote the largest root of the polynomial $x^3 - (2+V) x - 1$ by
$a_V$. For small $V > 0$, we have $a_V \approx
\frac{\sqrt{5}+1}{2} + cV$ with a suitable constant $c$. Our goal
is to prove the following pair of theorems (recall that
$\sigma(H_{V,\omega}) = \Sigma_V$ for every $\omega$).

Let  $M(n,m,\omega,E)$ be the standard transfer matrix, that is, the $\mathrm{SL}(2,\R)$ matrix that maps $(u(m+1),u(m))^T$ to $(u(n+1),u(n))^T$ for every solution $u$ of the difference equation $H_{V,\omega} u = Eu$.

\begin{Thm}\label{t.tmest}
For every $V > 0$ and every
\begin{equation}\label{e.zetadef}
\zeta > \frac{\log [(5 + 2V)^{1/2} (3 + V) a_V]}{\log
\frac{\sqrt{5} + 1}{2}},
\end{equation}
there is a constant $C > 0$ such that for every $\omega$ and every $E \in \Sigma_V$, we have
\begin{equation}\label{f.tmest}
\max_{0 \le |n-m| \le N} \|M(n,m,\omega,E)\| \le C N^\zeta.
\end{equation}
\end{Thm}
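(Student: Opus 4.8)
The plan is to reduce everything to the Fibonacci blocks $M_k := M(F_k,0,0,E)$, which satisfy $M_{k+1}=M_{k-1}M_k$ and lie in $\mathrm{SL}(2,\mathbb R)$, using two facts: a uniform bound on the half-traces $x_k=\tfrac12\operatorname{Tr}M_k$ for $E\in\Sigma_V$, and the observation that the recursion can be \emph{linearized} via Cayley--Hamilton. Concretely, from $M_k=M_{k-2}M_{k-1}$ one has $M_{k-1}=M_{k-2}^{-1}M_k$, so $M_{k+1}=M_{k-1}M_k=M_{k-2}^{-1}M_k^2$; applying $M_k^2=2x_kM_k-I$ (valid since $\det M_k=1$) and $M_{k-2}^{-1}M_k=M_{k-1}$ once more gives
\[
M_{k+1}=2x_k\,M_{k-1}-M_{k-2}^{-1},\qquad k\ge 1 .
\]
Since $M_{k-2}\in\mathrm{SL}(2,\mathbb R)$ implies $\|M_{k-2}^{-1}\|=\|M_{k-2}\|$, and since $|x_k|\le 1+\tfrac V2$ for all $k$ when $E\in\Sigma_V$ (a quantitative form of the boundedness in Theorem~\ref{spectrum}; Sütő \cite{S87}), this yields the scalar inequality $\|M_{k+1}\|\le(2+V)\|M_{k-1}\|+\|M_{k-2}\|$.

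I would then compare $\{\|M_k\|\}$ with the solution of the linear recursion $u_{j}=(2+V)u_{j-2}+u_{j-3}$, whose characteristic polynomial is exactly $x^3-(2+V)x-1$, with dominant root $a_V$ (the other two roots have modulus $<a_V$). Using the trivial bounds $\|M_{-1}\|,\|M_0\|,\|M_1\|\le 3+V$ (valid because $|E|\le 2+V$ on $\Sigma_V$) as initial data, one gets $\|M_{F_k}\|\le C(V)\,a_V^{\,k}$ for every $k$. As $\log F_k=k\log\mu+O(1)$ with $\mu=\tfrac{\sqrt5+1}{2}$, this is already a polynomial bound $\|M_{F_k}\|\le C'(V)\,F_k^{\,\log a_V/\log\mu}$ at the Fibonacci scales.

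It remains to pass to an arbitrary window $[m+1,n]$ with $|n-m|\le N$, and here I fix $k$ with $F_k\le N<F_{k+1}$ and invoke the self-similar block structure of the Fibonacci potential: decomposing $\mathbb Z$ into blocks of lengths $F_k$ and $F_{k-1}$, the window meets only boundedly many of them, so $M(n,m,\omega,E)$ is a product of transfer matrices over boundedly many full blocks of level $\le k$, boundedly many one-step matrices, and two ``edge'' pieces, each an (inverse of an) initial segment of a level-$\le k$ block. Bounding the full blocks by $\|M_{F_j}\|\le C a_V^{\,k}$, the one-step matrices $T_{V,\omega}(\cdot,E)$ by $3+V$, and the shortest constituents by an elementary Hilbert--Schmidt-type estimate of order $(5+2V)^{1/2}$, and — crucially — unwinding the edge pieces additively through the same Cayley--Hamilton identities rather than as long matrix products, one arrives at
\[
\max_{0\le|n-m|\le N}\|M(n,m,\omega,E)\|\ \le\ C\,\big[(5+2V)^{1/2}(3+V)\,a_V\big]^{\,k}\cdot\big(\text{polynomial in }k\big).
\]
Since $k\le \log N/\log\mu+O(1)$, the right-hand side is $\le C_\zeta N^{\zeta}$ for every $\zeta$ strictly larger than $\log[(5+2V)^{1/2}(3+V)a_V]/\log\mu$, i.e.\ for every $\zeta$ as in \eqref{e.zetadef}. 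The estimate is uniform over $E\in\Sigma_V$ and over $\omega$, the latter because every window reads a factor of the Fibonacci subshift.

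The easy part is the Fibonacci-scale estimate: linearizing the trace-map recursion is short and transparent, and identifies $a_V$ at once. The main obstacle is the last step — handling windows that are not aligned with the block hierarchy while keeping the bound genuinely polynomial: a naive submultiplicative estimate on the edge pieces produces a useless quasi-polynomial bound, so one must exploit the $\mathrm{SL}(2)$/trace identities to collapse the partial-block contributions into a controlled additive sum, and it is this step that introduces the (presumably non-optimal) constants $(5+2V)^{1/2}$ and $(3+V)$.
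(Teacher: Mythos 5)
Your first step — linearizing the recursion via Cayley--Hamilton to get $\|M_{k+1}\|\le(2+V)\|M_{k-1}\|+\|M_{k-2}\|$ and comparing with the scalar recursion whose characteristic polynomial is $x^3-(2+V)x-1$ — is exactly the paper's Lemma~\ref{l.b1}, and it is correct. The identification of $a_V$ and the bound $\|M_n\|\le Ca_V^n$ at Fibonacci scales are fine.

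The gap is in your ``crucially, unwinding the edge pieces additively through the same Cayley--Hamilton identities'' step, which is precisely where the real work happens and where you stop short. The identity $M_n^2=2x_nM_n-I$ alone only handles a product of \emph{consecutive} Fibonacci blocks. To control the off-Fibonacci windows one needs an inductive identity expressing $M_nM_{n+k}$ (for arbitrary gap $k\ge 0$) as a four-term linear combination $P_k^{(1)}M_{n+k}+P_k^{(2)}M_{n+k-1}+P_k^{(3)}M_{n+k-2}+P_k^{(4)}I$ together with the growth estimate $\sum_j|P_k^{(j)}|\le(5+2V)(3+V)^{\lfloor k/2\rfloor}$ (the paper's Lemma~\ref{l.b2prep}). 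This is a genuine induction on $k$ with a nontrivial bookkeeping of the coefficients, not a direct consequence of $M_n^2=2x_nM_n-I$. Without it, your decomposition into ``boundedly many full blocks plus edges'' degenerates into the very submultiplicative bound you are trying to avoid: the edge pieces are themselves long products, and each level of their recursive decomposition multiplies the estimate by a constant, giving the quasi-polynomial bound you correctly fear. Relatedly, your attribution of the constants is off: in the paper $(5+2V)$ arises from $\sum_j|P_1^{(j)}|\le 5+2V$ and $(3+V)^{1/2}$ from the $(3+V)^{\lfloor k/2\rfloor}$ growth of the $P_k^{(j)}$, not from a Hilbert--Schmidt bound on short pieces or from the one-step matrices.

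Two further points you gloss over. First, the clean way to organize the $\omega=0$ case is the Zeckendorf representation $n=\sum_{j=0}^{K}F_{n_j}$ (with $n_{j+1}-n_j\ge 2$), giving $M(n,0,0,E)=M_{n_0}\cdots M_{n_K}$ and a $K$ that grows like $\log n/(2\log\mu)$; your ``boundedly many full blocks of level $\le k$'' picture understates the number of constituents. Second, passing from prefixes of $u$ to an arbitrary window over an arbitrary $\omega$ is not automatic; the paper uses a structural fact from \cite{DL99a} that every factor $w$ of $u$ can be written $w=xyz$ with $|y|=2$ and $x^R,z$ prefixes of $u$, plus the palindromic symmetry $\|M(V,E,x^R)\|=\|M(V,E,x)\|$. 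Your one-line ``every window reads a factor of the Fibonacci subshift'' does not by itself reduce the factor bound to the prefix bound.
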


\begin{Thm}\label{t.solest}
For every $V > 0$,
$$
\gamma_\ell < \frac{\log \left( 1 + \frac{1}{(2+2V)^2} \right)}{16
\cdot \log \left( \frac{\sqrt{5} + 1}{2} \right)}, \quad \text{
and } \quad \gamma_u > 1 + \frac{\log [(5 + 2V)^{1/2} (3 + V)
a_V]}{\log \frac{\sqrt{5} + 1}{2}},
$$
there are constants $C_\ell, C_u > 0$ such that for every $\omega$, $L \ge 1$,
and $E \in \Sigma_V$, we have that every solution to the
difference equation $H_{V,\omega} u = E u$, which is normalized by
$|u(0)|^2 + |u(1)|^2 = 1$, obeys the estimates
\begin{equation}\label{f.solest}
C_\ell L^{\gamma_\ell} \le \|u\|_L \le C_u L^{\gamma_u},
\end{equation}
where the local $\ell^2$ norm $\| \cdot \|_L$ is defined by
$$
\|u\|_L^2 = \sum_{n=1}^{[L]} |u(n)|^2 + (L-[L]) |u([L]+1)|^2.
$$
for $L \ge 1$.
\end{Thm}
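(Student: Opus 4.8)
The plan is to derive both bounds in \eqref{f.solest} from the transfer matrix estimate in Theorem~\ref{t.tmest} together with a complementary \emph{lower} bound on transfer matrix norms along the Fibonacci sequence $\{F_k\}$, all of which is by now a standard circle of ideas (going back to Iochum--Testard and Damanik--Lenz; see also the treatment in the strongly coupled case). First I would record the elementary two-sided comparison between the growth of a normalized solution $u$ and the transfer matrices: since $\binom{u(n+1)}{u(n)} = M(n,0,\omega,E)\binom{u(1)}{u(0)}$ and $|u(0)|^2+|u(1)|^2 = 1$, one has $|u(n+1)|^2 + |u(n)|^2 \le \|M(n,0,\omega,E)\|^2$, so the upper bound in \eqref{f.solest} with exponent $\gamma_u$ follows immediately by summing \eqref{f.tmest} over $n \le L$: the extra ``$+1$'' in the stated $\gamma_u$ absorbs the summation (one more power of $L$). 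For the lower bound, I would use the fact that for each $E \in \Sigma_V$ there is, by Theorem~\ref{spectrum} and the conserved quantity, good control over the traces $x_k(E) = \tfrac12\operatorname{Tr}M_k(E)$ along the Fibonacci scale; in particular the invariant $x_{k+1}^2 + x_k^2 + x_{k-1}^2 - 2x_{k+1}x_kx_{k-1} = 1 + V^2/4$ prevents all three consecutive traces from being simultaneously small, which forces $\|M_k(E)\| \ge c\,(\text{something} > 1)^k$ along a subsequence, with the precise exponential rate governed by the quantity $1 + \tfrac1{(2+2V)^2}$ appearing in the definition of $\gamma_\ell$.

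The key steps, in order: (1) prove the upper bound in \eqref{f.solest} from Theorem~\ref{t.tmest} by the summation argument just described, checking that the exponent $1 + \tfrac{\log[(5+2V)^{1/2}(3+V)a_V]}{\log\frac{\sqrt5+1}{2}}$ is admissible; (2) establish a lower bound of the form $\|M_{F_k}(E)\| \ge C \mu_V^{F_k}$ (for a suitable $\mu_V > 1$ and infinitely many $k$, or after passing to $M_{F_k}^j$ type products) using the trace recursion and the Fricke--Vogt invariant --- this is where the constant $1 + \tfrac1{(2+2V)^2}$ enters, via the estimate that whenever $|x_{k-1}|, |x_k| \le 1$ one gets $|x_{k+1}| $ or $|x_{k-2}|$ bounded below, hence $\|M\|^2 \ge 1 + 2x_{k+1}^2 - \ldots$ type bounds; (3) transfer this exponential lower bound on a sparse (Fibonacci) subsequence into a power-law lower bound $\|u\|_L \ge C_\ell L^{\gamma_\ell}$ on the local $\ell^2$ norm, using that $F_k$ grows like $\bigl(\tfrac{\sqrt5+1}{2}\bigr)^k$ so that an exponential rate $\mu_V^{F_k}$ at scale $L \sim F_k$ becomes $L^{\log\mu_V/\log\frac{\sqrt5+1}{2}}$, and that a lower bound at the scales $F_k$ propagates (with a controlled loss, explaining the factor $16$ in the denominator of $\gamma_\ell$) to a lower bound at all scales $L$; (4) assemble the constants, noting the bounds hold uniformly in $\omega$ because $\Sigma_V$ is $\omega$-independent and the relevant trace-map quantities depend only on $E$ and $V$.

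The main obstacle I expect is step~(3): converting a lower bound that is only known along the sparse sequence of Fibonacci lengths into a genuine lower bound valid at \emph{all} scales $L$, while keeping explicit and essentially sharp track of how the exponent degrades. The naive argument (a solution small at scale $F_k$ but large at $F_{k+1}$ could hide most of its mass in the block $(F_k, F_{k+1}]$) costs a factor in the exponent, and getting the constant $16$ --- rather than something worse --- requires the more careful Gordon-type / partition argument balancing the contributions of several consecutive Fibonacci blocks, together with the observation that the transfer matrix over one Fibonacci block cannot contract a vector too much (again a consequence of the invariant being bounded). A secondary technical point is to make the exponential \emph{lower} rate $\mu_V$ explicit as $V\to 0$; here one must be slightly careful because at $V=0$ the spectrum is an interval and no uniform exponential lower bound holds, so the rate $1+\tfrac1{(2+2V)^2}$ must (and does, by the form of the Fricke--Vogt invariant $1+V^2/4$) degenerate appropriately, and one should simply verify the inequality defining $\gamma_\ell$ is consistent with this degeneration rather than attempt to optimize it.
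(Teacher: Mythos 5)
Your upper bound argument is exactly the paper's: cite Theorem~\ref{t.tmest} and absorb the summation into the extra $+1$ in $\gamma_u$. For the lower bound, the paper does not re-derive anything; it simply quotes \cite[Lemma~4.1]{DKL}, which asserts $\|U\|_{F_{8n}} \ge (1 + \tfrac{1}{(2+2V)^2})^{n/2}$, and then converts using $F_{8n}\sim(\tfrac{\sqrt5+1}{2})^{8n}$ plus monotonicity of $L\mapsto\|U\|_L$. Your plan amounts to re-deriving that lemma, which is legitimate, and you have correctly located the invariant $I = 1 + V^2/4$ (hence $1+\tfrac{1}{(2+2V)^2}$) and the Gordon/Cayley--Hamilton device as the key ingredients.

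However, step~(2) of your plan is misconceived as written, and this is not merely a typo. A lower bound $\|M_{F_k}(E)\|\ge c\,\mu_V^k$ (and it must be $\mu_V^k$, not $\mu_V^{F_k}$ --- the latter would contradict the polynomial-in-$F_k$ upper bound of Lemma~\ref{l.b2}) does \emph{not} yield a lower bound on $\|u\|_L$ for an arbitrary normalized solution, because the initial vector $U(0)$ may be nearly aligned with the most-contracted direction of $M_{F_k}$. What the Gordon two-block method actually produces, via Cayley--Hamilton $M_{F_k}^2 - 2x_k M_{F_k} + I = 0$ and boundedness of $x_k$ on $\Sigma_V$, is a lower bound on $\max(\|U(F_k)\|,\|U(2F_k)\|)$ directly in terms of $\|U(0)\|$ --- a statement about the solution, not about $\|M\|$. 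You mention this mechanism, but you assign it to step~(3) as a sparse-to-dense transfer device, whereas in the paper's argument the sparse-to-dense transfer is trivial (monotonicity of $\|U\|_L$ costs only a multiplicative constant since $F_{8n}/F_{8(n-1)}$ is bounded). The factor~$16$ is not a sparse-to-dense loss at all: it is $8\times 2$, coming from the scale $F_{8n}$ and the exponent $n/2$ in \cite[Lemma~4.1]{DKL}. So the structure should be: Gordon's argument \emph{is} the lower bound, placed in step~(2); step~(3) is just the change of variables $F_{8n}\sim\phi^{8n}$ plus monotonicity. With that reorganization your plan coincides with the paper's proof.
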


Theorem~\ref{t.tmest} is a quantitative version of
\cite[Theorem~3]{DL99a}, which in turn was a generalization of
\cite[Theorem~1]{IT}. Theorem~\ref{t.solest}, on the other hand,
is a quantitative version of \cite[Propositions~5.1 and 5.2]{DKL}.
The upper bound in Theorem~\ref{t.solest} is a consequence of
Theorem~\ref{t.tmest}, while the lower bound in
Theorem~\ref{t.solest} will be extracted from the details of the
proof of \cite[Proposition~5.1]{DKL}.

Let us recall some notation from \cite{DL99a}. For $V > 0$, $E \in
\R$, and $a \in \{0,1\}$, we write
$$
T(V,E,a) = \begin{pmatrix} E - V a & - 1 \\ 1 & 0 \end{pmatrix}.
$$
For a word $w = w_1 \ldots w_n \in \{ 0,1 \}^*$, we then set
$$
M(V,E,w) = T(V,E,w_n) \times \cdots \times T(V,E,w_1).
$$
Next, define the words $\{s_n\}_{n \ge 0}$ inductively by
$$
s_0 = 0 , \quad s_1 = 1 , \quad s_n = s_{n-1} s_{n-2} \text{ for }
n \ge 2.
$$
By the definition of these words, there is a unique $u \in \{0,1\}^{\Z_+}$ (the fixed point of the Fibonacci substitution) that has $s_n$ as a prefix for every $n \ge 1$; namely $u = 1011010110110\ldots$. We denote the set of finite subwords of $u$ by $\mathcal{W}_u$, that is, $\mathcal{W}_u = \{ 0 , 1 , 01 , 10 , 11 , 010 , 011 , 101 , 110 , \ldots \}$. By uniform recurrence of $u$ it suffices to consider the matrices $M(V,E,w)$ for $E \in \Sigma_V$ and $w \in \mathcal{W}_u$ when proving Theorems~\ref{t.tmest} and \ref{t.solest}.

With these quantities, we have
$$
M_n = M_n(V,E) = M(V,E,s_n)
$$
and consequently
$$
x_n = x_n(V,E) = \frac12 \mathrm{Tr} M(V,E,s_n)
$$
for $n \ge 1$.

Finding upper bounds for the norm of transfer matrices consists of
three steps. The first step is to bound the special matrices
$M_n$, that is, $M(F_n,0,\omega=0,E)$. This is the objective of
Lemma~\ref{l.b1}. Then, in Lemma~\ref{l.b2}, we use interpolation
to bound the norm of the matrices $M(n,0,\omega=0,E)$. Finally,
the case of general matrices $M(n,m,\omega,E)$ is treated using
partition. This will then complete the proof of
Theorem~\ref{t.tmest}.

We begin with the first step. Part~(a) of the following lemma is
due to S\"ut\H{o} (see \cite[Lemma~2]{S87}) and part~(b) is a
relative of \cite[Lemma~4.(ii)]{IT}:

\begin{Lm}\label{l.b1}
{\rm (a)} We have $E \in \Sigma_V$ if and only if the sequence
$\{x_n\}$ is bounded. Moreover, we have
$$
|x_n| \le 1 + \frac{V}{2}
$$
for every $E \in \Sigma_V$ and $n \ge 1$.
\\
{\rm (b)} With some positive $V$-dependent constant $C$, we have
$$
\|M_n\| \le C a_V^n
$$
for every $E \in \Sigma_V$ and $n \ge 1$.
\end{Lm}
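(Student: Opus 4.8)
The plan is to deduce both parts from S\"ut\H{o}'s analysis of the trace recursion, i.e., from Theorem~\ref{spectrum}, the characterization $\Sigma_V=\bigcap_n\Sigma_V^{(n)}$, and the invariant $x_{k+1}^2+x_k^2+x_{k-1}^2-2x_{k+1}x_kx_{k-1}-1=\tfrac{V^2}{4}$.

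\emph{Part (a).} First I would record the initial data of the trace recursion: from the explicit forms of $M_{-1},M_0$ and the fact that $\alpha\in[1-\alpha,1)$ one gets $x_{-1}=1$, $x_0=\tfrac{E}{2}$, $x_1=\tfrac{E-V}{2}$. Since $T(x_k,x_{k-1},x_{k-2})=(x_{k+1},x_k,x_{k-1})$, the forward $T$-orbit of the point $\bigl(\tfrac{E-V}{2},\tfrac{E}{2},1\bigr)$ appearing in Theorem~\ref{spectrum} is exactly $\{(x_{n+1},x_n,x_{n-1})\}_{n\ge0}$, so Theorem~\ref{spectrum} gives that $E\in\Sigma_V$ iff this orbit is bounded iff $\{x_n\}$ is bounded. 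For the quantitative bound I would invoke $\Sigma_V=\bigcap_n\Sigma_V^{(n)}$, which for $E\in\Sigma_V$ yields $\min\{|x_n|,|x_{n+1}|\}\le1$ for every $n\ge1$. Hence if $|x_n|>1$ with $n\ge2$, then both $|x_{n-1}|\le1$ and $|x_{n+1}|\le1$, and solving the invariant as a quadratic equation in $x_n$ gives
\[
|x_n|\le|x_{n-1}x_{n+1}|+\sqrt{(1-x_{n-1}^2)(1-x_{n+1}^2)+\tfrac{V^2}{4}}.
\]
Writing $|x_{n-1}|=\cos\phi$, $|x_{n+1}|=\cos\psi$ and using $\sqrt{p^2+q^2}\le p+q$ for $p,q\ge0$, the right-hand side is at most $\cos\phi\cos\psi+\sin\phi\sin\psi+\tfrac{V}{2}=\cos(\phi-\psi)+\tfrac{V}{2}\le1+\tfrac{V}{2}$. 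The case $n=1$ is immediate from $\Sigma_V\subseteq[-2,2+V]$ (valid since $-2\le H_0\le H_{V,\omega}\le H_0+V\le 2+V$ as quadratic forms), which gives $|x_1|=\tfrac{|E-V|}{2}\le1+\tfrac{V}{2}$.

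\emph{Part (b).} The key step is to convert the multiplicative recursion $M_{n+1}=M_{n-1}M_n$ into a linear one. Using $M_n=M_{n-2}M_{n-1}$ (valid for $n\ge1$), hence $M_{n-1}=M_{n-2}^{-1}M_n$, together with the Cayley--Hamilton identity $M_n^2=2x_nM_n-I$ (all $M_n$ lie in $\mathrm{SL}(2,\R)$), one computes
\[
M_{n+1}=M_{n-1}M_n=M_{n-2}^{-1}M_n^2=M_{n-2}^{-1}(2x_nM_n-I)=2x_nM_{n-1}-M_{n-2}^{-1}.
\]
Taking operator norms, using $\|M_{n-2}^{-1}\|=\|M_{n-2}\|$ (true for $\mathrm{SL}(2,\R)$ matrices, whose singular values are $\sigma$ and $\sigma^{-1}$), and invoking the bound $|x_n|\le1+\tfrac{V}{2}$ from part (a), I obtain
\[
\|M_{n+1}\|\le(2+V)\,\|M_{n-1}\|+\|M_{n-2}\|\qquad(n\ge1).
\]
Since $a_V$ is the largest root of $t^3-(2+V)t-1$, it satisfies $a_V^{\,n+1}=(2+V)a_V^{\,n-1}+a_V^{\,n-2}$ and $a_V>1$, so a straightforward induction gives $\|M_n\|\le C\,a_V^{\,n}$, with $C$ chosen to absorb the base cases $n\in\{-1,0,1\}$; these base norms are bounded uniformly in $E\in\Sigma_V$ because $\Sigma_V$ is a bounded set.

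I do not expect a serious obstacle, since both parts reorganize known arguments of S\"ut\H{o} (cf.\ \cite[Lemma~2]{S87} and \cite[Lemma~4]{IT}); the two places that need care are the elementary trigonometric estimate extracting the \emph{sharp} constant $1+\tfrac{V}{2}$ in part (a) rather than a cruder bound, and keeping the linear recursion in part (b) free of additive constant terms (exactly what $\|M^{-1}\|=\|M\|$ buys), so that the growth exponent comes out to be precisely $a_V$ and not the root of a slightly larger polynomial.
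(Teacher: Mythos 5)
Your proof is correct and takes essentially the same route as the paper: part (b) is, up to an index shift, exactly the argument the authors give (Cayley--Hamilton converts the multiplicative recursion into $\|M_{n+1}\|\le(2+V)\|M_{n-1}\|+\|M_{n-2}\|$, compared against the linear recursion with characteristic polynomial $t^3-(2+V)t-1$), and for part (a) you have filled in the argument the paper delegates to S\"ut\H{o}'s \cite[Lemma~2]{S87}, using precisely the ingredients (the invariant and $\min\{|x_{n\pm1}|\}\le1$) that the authors themselves deploy in Lemma~\ref{l.l2} of the appendix for the off-diagonal analogue. Your trigonometric substitution $|x_{n\pm1}|=\cos(\cdot)$ combined with $\sqrt{p^2+q^2}\le p+q$ is a tidy way to land exactly on $1+\tfrac{V}{2}$, and handling $n=1$ via the crude spectral bound $\Sigma_V\subseteq[-2,2+V]$ is a reasonable way to sidestep the fact that the Markov-type condition $\min\{|x_n|,|x_{n+1}|\}\le1$ only pins down both neighbors of $x_n$ when $n\ge2$.
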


\begin{proof}
As pointed out above, part~(a) is known. Let us prove part~(b). By the Cayley-Hamilton Theorem, we have that
$M_n^2 - 2 x_n M_n + I = 0$,
that is,
$$
M_n = 2x_n I - M_n^{-1}.
$$
The recursion for the matrices $M_n$, $M_n = M_{n-2} M_{n-1}$, gives
$$
M_{n-2}^{-1} = M_{n-1} M_n^{-1}.
$$
Putting these things together, we find
$$
M_n = M_{n-2} M_{n-1} = M_{n-2}(2x_{n-1} I - M_{n-1}^{-1}) = 2 x_{n-1} M_{n-2} - M_{n-3}^{-1}.
$$
Since we also have $\|M_n\| = \|M_n^{-1}\|$, we obtain from this identity along with part~(a) the estimate
$$
\|M_n\| \le (2 + V) \|M_{n-2}\| + \|M_{n-3}\|
$$
for every $E \in \Sigma_V$.

Consider for comparison the recursion
$$
m_n = (2 + V) m_{n-2} + m_{n-3}.
$$
It is clear that we have $\|M_n\| \le m_n$ if we consider the
sequence $\{ m_n \}$ generated by the recursion and the initial
conditions $m_1 = \|M_1\|$, $m_2 = \|M_2\|$, $m_3 = \|M_3\|$.

Any solution of the recursion is of the form
$$
m_n = c_1 x_1^n + c_2 x_2^n + c_3 x_3^n,
$$
where the $x_j$ are the roots of the characteristic polynomial
$x^3 - (2+V) x - 1$. Thus, the definition of $a_V$ implies that
the estimate in (b) holds.
\end{proof}

\noindent\textit{Remark.} Since $F_n \sim
(\frac{\sqrt{5}+1}{2})^n$, we can infer that $\|M_n\| \le C F_n^{1
+ \delta}$ with $\delta \to 0$ as $V \to 0$. Since for $V = 0$,
the norm of the transfer matrices grows linearly at the energies
$E = \pm 2$, we cannot expect a better result in general. However,
since the transfer matrices are bounded when $V = 0$ and $E \in
(-2,2)$ it is reasonable to expect that there is a better bound
for small values of $V$ for most energies in the spectrum.

\medskip

Let us now turn to the second step, which is the interpolation of
the estimates from the previous lemma to non-Fibonacci sites in
the $\omega = 0$ sequence. The following lemma is a relative of
\cite[Lemma~5]{IT}. Even though the proof is closely related to
that in \cite{IT}, we give the details to clearly show where the
improved estimate comes from.

\begin{Lm}\label{l.b2prep}
For $n \ge 1$ and $k \ge 0$, we may write
$$
M_n M_{n+k} = P_k^{(1)} M_{n+k} + P_k^{(2)} M_{n + k - 1} + P_k^{(3)} M_{n + k - 2} + P_k^{(4)} I
$$
with coefficients $P_k^{(j)}$ that also depend on $n$ and $E$. Moreover, for every $n \ge 1$ and $E \in \Sigma_V$, we have that
$$
\sum_{j = 1}^4 |P_k^{(j)}| \le (5 + 2V) (3 + V)^{\lfloor k/2 \rfloor}.
$$
\end{Lm}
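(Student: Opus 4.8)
The plan is to prove the claimed decomposition of $M_n M_{n+k}$ and the bound on $\sum_j |P_k^{(j)}|$ simultaneously, by induction on $k$ in steps of two (i.e.\ reducing $k \mapsto k-2$). The two ingredients are the Fibonacci recursion $M_m = M_{m-2} M_{m-1}$ (valid for $m \ge 2$) and the Cayley--Hamilton identity $M_m^2 = 2 x_m M_m - I$, which holds because every $M_m \in \mathrm{SL}(2,\R)$, so that $M_m^2 - (\operatorname{Tr} M_m) M_m + I = 0$. Throughout, the bound $|x_m| \le 1 + \tfrac{V}{2}$ is available: for $m \ge 1$ it is Lemma~\ref{l.b1}(a), and for $m = 0$ one has $|x_0| = |E|/2 \le 1 + \tfrac{V}{2}$ since $\Sigma_V \subseteq [-2, 2+V]$.

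\emph{Base cases.} For $k = 0$, Cayley--Hamilton gives $M_n M_n = 2 x_n M_n - I$, so $(P_0^{(1)}, P_0^{(2)}, P_0^{(3)}, P_0^{(4)}) = (2x_n, 0, 0, -1)$ and $\sum_j |P_0^{(j)}| = 2|x_n| + 1 \le 3 + V \le 5 + 2V$. For $k = 1$, one has $M_n M_{n+1} = M_{n+2}$, and the same manipulation as in the proof of Lemma~\ref{l.b1} (namely $M_{n+2} = 2 x_{n+1} M_n - M_{n-1}^{-1}$ combined with $M_{n-1}^{-1} = 2 x_{n-1} I - M_{n-1}$) yields $M_{n+2} = 2 x_{n+1} M_n + M_{n-1} - 2 x_{n-1} I$; hence $(P_1^{(1)}, P_1^{(2)}, P_1^{(3)}, P_1^{(4)}) = (0, 2 x_{n+1}, 1, -2 x_{n-1})$ and $\sum_j |P_1^{(j)}| \le (2+V) + 1 + (2+V) = 5 + 2V$.

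\emph{Inductive step.} For $k \ge 2$, write $M_n M_{n+k} = (M_n M_{n+k-2}) M_{n+k-1}$ using $M_{n+k} = M_{n+k-2} M_{n+k-1}$, insert the inductive decomposition of $M_n M_{n+k-2}$, and reduce the four resulting products using
\begin{align*}
M_{n+k-2} M_{n+k-1} &= M_{n+k}, \\
M_{n+k-3} M_{n+k-1} &= 2 x_{n+k-3} M_{n+k-1} - M_{n+k-2}, \\
M_{n+k-4} M_{n+k-1} &= 2 x_{n+k-2} M_{n+k-2} - I,
\end{align*}
each a short consequence of the Fibonacci recursion and Cayley--Hamilton (for instance $M_{n+k-4} M_{n+k-1} = M_{n+k-4} M_{n+k-3} M_{n+k-2} = M_{n+k-2}^2$). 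Collecting coefficients yields
\begin{align*}
P_k^{(1)} &= P_{k-2}^{(1)}, & P_k^{(2)} &= 2 x_{n+k-3} P_{k-2}^{(2)} + P_{k-2}^{(4)}, \\
P_k^{(3)} &= -P_{k-2}^{(2)} + 2 x_{n+k-2} P_{k-2}^{(3)}, & P_k^{(4)} &= -P_{k-2}^{(3)},
\end{align*}
and therefore, since $2|x_m| \le 2 + V$,
\[
\sum_{j=1}^4 |P_k^{(j)}| \le |P_{k-2}^{(1)}| + (3+V) |P_{k-2}^{(2)}| + (3+V)|P_{k-2}^{(3)}| + |P_{k-2}^{(4)}| \le (3+V) \sum_{j=1}^4 |P_{k-2}^{(j)}|.
\]
Iterating down to the base case $k \in \{0,1\}$, where the sum is at most $5 + 2V$, gives $\sum_j |P_k^{(j)}| \le (5 + 2V)(3+V)^{\lfloor k/2 \rfloor}$, as claimed.

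\emph{Main difficulty.} There is no deep obstacle: the computation is routine $\mathrm{SL}(2,\R)$ algebra, and non-commutativity is handled by only ever invoking the genuine relations $M_m = M_{m-2} M_{m-1}$ and $M_m^2 = 2 x_m M_m - I$. The one point requiring care is index bookkeeping in the inductive step: the reductions above refer to $M_{n+k-4}$ and use the recursion at index $n+k-2$, which could be out of range for very small $n,k$. This causes no trouble, since the coefficients $P_{k-2}^{(2)}$ and $P_{k-2}^{(3)}$ vanish when $k-2 = 0$, so those low-index reductions are only invoked once $k \ge 3$, i.e.\ when $n+k \ge 4$ and all indices appearing are legitimate; alternatively one may simply check the cases $k = 2, 3$ directly and run the induction from there.
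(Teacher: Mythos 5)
Your proof is correct and follows essentially the same route as the paper's: the same base cases at $k=0,1$, the same step-two induction via $M_n M_{n+k} = (M_n M_{n+k-2}) M_{n+k-1}$, the same reductions of the lower-index products using the Fibonacci recursion together with Cayley--Hamilton, and the same resulting recurrence for the coefficients $P^{(j)}$. The only genuine addition is your explicit handling of the index bookkeeping for small $k$ (using that $P_0^{(2)}=P_0^{(3)}=0$), a detail the paper leaves implicit.
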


\begin{proof}
Consider the case $k = 0$. Then,
$$
M_n M_n = M_n^2 = 2 x_n M_n - I
$$
by Cayley-Hamilton and hence we can set
$$
P_0^{(1)} = 2x_n, \quad P_0^{(2)} = P_0^{(3)} = 0 , \quad P_0^{(4)} = -1.
$$
It follows that for $E \in \Sigma_V$, we have the estimate
$$
\sum_{j = 1}^4 |P_0^{(j)}| \le 3 + V.
$$

Consider now the case $k = 1$. Then,
$$
M_n M_{n+1} = M_{n+2} = 2 x_{n+1} M_n + M_{n-1} - 2 x_{n-1} I
$$
by \cite[Lemma~6]{IT} and hence we can set
$$
P_1^{(1)} = 0, \quad P_1^{(2)} = 2 x_{n+1} , \quad P_1^{(3)} = 1 , \quad P_1^{(4)} = - 2 x_{n-1}.
$$
For $E \in \Sigma_V$, we therefore have
$$
\sum_{j = 1}^4 |P_1^{(j)}| \le 5 + 2V.
$$

Next, assume that the lemma holds for $k \in \{ 1 , \ldots , \ell
\}$ and consider the case $k = \ell + 1$. Then, using
\cite[Lemma~6]{IT} one more time, we find
\begin{align*}
M_n M_{n + \ell + 1} & = ( M_n M_{n + \ell - 1}) M_{n + \ell} \\
& = \left( P_{\ell - 1}^{(1)} M_{n+\ell - 1} + P_{\ell - 1}^{(2)}
M_{n + \ell - 2} + P_{\ell - 1}^{(3)} M_{n + \ell - 3} + P_{\ell - 1}^{(4)}
I \right) M_{n + \ell} \\
& = P_{\ell - 1}^{(1)} M_{n+\ell + 1} + P_{\ell - 1}^{(2)} (2 x_{n + \ell - 2}
M_{n + \ell} - M_{n + \ell - 1}) + \\ & \qquad P_{\ell - 1}^{(3)} ( 2x_{n + \ell - 1}
M_{n + \ell - 1} - I) + P_{\ell - 1}^{(4)} M_{n + \ell} \\
& = P_{\ell + 1}^{(1)} M_{n+\ell + 1} + P_{\ell + 1}^{(2)} M_{n + \ell} + P_{\ell + 1}^{(3)}
M_{n + \ell - 1} + P_k^{(4)} I,
\end{align*}
where we set
\begin{align*}
P_{\ell + 1}^{(1)} & = P_{\ell - 1}^{(1)} \\
P_{\ell + 1}^{(2)} & = 2 x_{n + \ell - 2} P_{\ell - 1}^{(2)} + P_{\ell - 1}^{(4)}\\
P_{\ell + 1}^{(3)} & = - P_{\ell - 1}^{(2)} + 2x_{n + \ell - 1} P_{\ell - 1}^{(3)}\\
P_{\ell + 1}^{(4)} & = - P_{\ell - 1}^{(3)}.
\end{align*}
It follows that
$$
\sum_{j = 1}^4 |P_{\ell + 1}^{(j)}| \le (3 + V) \sum_{j = 1}^4 |P_{\ell - 1}^{(j)}|.
$$
By induction hypothesis, we obtain the desired estimate.
\end{proof}

\begin{Lm}\label{l.b2}
For every $V > 0$ and every $\zeta$ obeying \eqref{e.zetadef}, there is a constant $\tilde C$ so that
$$
\| M(n,0,\omega=0,E) \| \le \tilde C n^{\zeta}
$$
for every $n \ge 1$, $E \in \Sigma_V$.
\end{Lm}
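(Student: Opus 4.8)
The plan is to reduce $M(n,0,\omega=0,E)$ to a linear combination of only four of the Fibonacci matrices $M_k$ (for which Lemma~\ref{l.b1}(b) already provides the exponential bound $\|M_k\|\le Ca_V^k$), while keeping careful control of the size of the scalar coefficients; that control is supplied by iterating Lemma~\ref{l.b2prep}. Abbreviate $\Phi(n)=M(n,0,\omega=0,E)$ and let $\zeta_0$ denote the right-hand side of \eqref{e.zetadef}, so the hypothesis reads $\zeta>\zeta_0$. The first step is to record, from the prefix structure of the Fibonacci word (each $s_k$ is a prefix of $u$ and $s_{k+1}=s_ks_{k-1}$), the decomposition
\[
\Phi(n)=M_{k_r}M_{k_{r-1}}\cdots M_{k_2}M_{k_1},
\]
where $n=F_{k_1}+F_{k_2}+\cdots+F_{k_r}$ is the greedy (Zeckendorf) expansion, so $k_1>k_2>\cdots>k_r$ with consecutive differences $g_i:=k_i-k_{i+1}\ge 2$, and $k_1$ is the index with $F_{k_1}\le n<F_{k_1+1}$. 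Indeed, if $F_k\le n<F_{k+1}$ and $r:=n-F_k<F_{k-1}$, then the first $F_k$ transfer matrices spell $s_k$, contributing $M_k$ on the right, while the next $r$ spell a prefix of $s_{k-1}$, hence of $u$, contributing $\Phi(r)$ on the left; one then iterates on $\Phi(r)$. Note the indices \emph{increase} from left to right in this product.

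Next I would collapse this product from the left by repeated use of Lemma~\ref{l.b2prep}, maintaining the invariant that after $j$ collapses $\Phi(n)$ equals
\[
\bigl(P^{(1)}M_{k_{r-j}}+P^{(2)}M_{k_{r-j}-1}+P^{(3)}M_{k_{r-j}-2}+P^{(4)}I\bigr)\cdot M_{k_{r-j-1}}\cdots M_{k_1},\qquad \sum_{i=1}^{4}|P^{(i)}|\le S_j.
\]
For $j=1$ the pair $M_{k_r}M_{k_{r-1}}$ is of the form $M_nM_{n+g_{r-1}}$ with $n=k_r$, so Lemma~\ref{l.b2prep} gives $S_1\le(5+2V)(3+V)^{\lfloor g_{r-1}/2\rfloor}$. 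For the inductive step one right-multiplies the current four-term combination by $M_{k_{r-j-1}}$; since every index occurring is $\le k_{r-j}<k_{r-j-1}$, each product $M_mM_{k_{r-j-1}}$ has the form $M_mM_{m+k'}$ with $2\le k'=k_{r-j-1}-m\le g_{r-j-1}+2$, and Lemma~\ref{l.b2prep} rewrites it as a combination of $M_{k_{r-j-1}},M_{k_{r-j-1}-1},M_{k_{r-j-1}-2},I$ with coefficient sum at most $(5+2V)(3+V)^{\lfloor g_{r-j-1}/2\rfloor+1}$ (the stray term $I\cdot M_{k_{r-j-1}}=M_{k_{r-j-1}}$ also fits the pattern). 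This preserves the invariant with $S_{j+1}\le S_j\,(5+2V)(3+V)^{\lfloor g_{r-j-1}/2\rfloor+1}$, so after $r-1$ collapses
\[
\Phi(n)=P^{(1)}M_{k_1}+P^{(2)}M_{k_1-1}+P^{(3)}M_{k_1-2}+P^{(4)}I,\qquad \sum_{i=1}^{4}|P^{(i)}|\le(5+2V)^{r-1}(3+V)^{\sum_{i=1}^{r-1}\lfloor g_i/2\rfloor+(r-2)}.
\]

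To finish, I would bound the two exponents and invoke Lemma~\ref{l.b1}(b). Since $\sum_i g_i=k_1-k_r\le k_1$ and all $g_i\ge 2$, one has $r-1\le\tfrac12 k_1$, hence the exponent of $(3+V)$ is at most $\tfrac12 k_1+(r-2)\le k_1$ and $(5+2V)^{r-1}\le(5+2V)^{k_1/2}$; combined with $\|M_{k_1}\|,\|M_{k_1-1}\|,\|M_{k_1-2}\|\le Ca_V^{k_1}$ this yields
\[
\|\Phi(n)\|\le C'\bigl[(5+2V)^{1/2}(3+V)\,a_V\bigr]^{k_1}=C'\Bigl(\tfrac{\sqrt5+1}{2}\Bigr)^{\zeta_0 k_1}
\]
with $C'=C'(V)$. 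Because $k_1$ is the greedy index, $n\ge F_{k_1}\ge(\tfrac{\sqrt5+1}{2})^{k_1-1}$, so $(\tfrac{\sqrt5+1}{2})^{\zeta_0 k_1}\le(\tfrac{\sqrt5+1}{2})^{\zeta_0}n^{\zeta_0}\le(\tfrac{\sqrt5+1}{2})^{\zeta_0}n^{\zeta}$ for $n\ge 1$, since $\zeta>\zeta_0>0$. Taking $\tilde C=C'(\tfrac{\sqrt5+1}{2})^{\zeta_0}$, enlarged if needed to absorb the finitely many small values of $n$ (where $\sup_{E\in\Sigma_V}\|\Phi(n)\|<\infty$ by compactness), gives $\|\Phi(n)\|\le\tilde C n^{\zeta}$ for all $n\ge 1$ and all $E\in\Sigma_V$.

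I expect the genuinely delicate point to be the exponent accounting in the middle step: one must ensure the accumulated power of $(3+V)$ is bounded by $k_1$ and not by a larger constant multiple of $k_1$. The subtlety is that, beyond the ``honest'' contribution $\tfrac12\sum_i g_i\le\tfrac12 k_1$, each of the $\le\tfrac12 k_1$ collapses picks up an extra $+1$ in the exponent (because after the first collapse one feeds $M_{k_{r-j-1}}$ into indices that may already have dropped by up to $2$), and these overheads must be seen to add up to only $O(k_1/2)$. It is exactly this bookkeeping that pins the exponent to $\zeta_0$; a cruder application of submultiplicativity, $\|\Phi(r)M_k\|\le\|\Phi(r)\|\,\|M_k\|$, is useless here since it loses a factor $\|M_k\|\sim a_V^k$ at every level of the recursion.
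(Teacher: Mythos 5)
Your proof is correct and is essentially the paper's argument: the same Zeckendorf decomposition of $n$ into a product of $M_{n_i}$'s, the same iterated use of Lemma~\ref{l.b2prep} to control the accumulated scalar coefficients, and the same reduction of the final exponent to the Zeckendorf parameters (your $r-1$ and $k_1$ are the paper's $K$ and $n_K$). The only difference is presentational: you maintain an explicit four-term linear combination and track its coefficient sum, while the paper packages the same collapse as an induction on the number of factors, proving $\|M_{n_0}\cdots M_{n_k}\|\le C(5+2V)^k b_V^{-n_0+2(k-1)}(a_Vb_V)^{n_k}$ with $b_V=(3+V)^{1/2}$, which coincides term for term (taking $n_0=k_r$, $K=r-1$, $n_K=k_1$) with your bound $(5+2V)^{r-1}(3+V)^{\sum_i\lfloor g_i/2\rfloor+(r-2)}a_V^{k_1}$.
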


Before we give the proof of Lemma~\ref{l.b2}, let us recall some
facts about the minimal representation of positive integers in
terms of Fibonacci numbers; compare, for example, \cite{HCB,Le,Z}.
Given $n \ge 1$, there is a unique representation
$$
n = \sum_{k = 0}^K F_{n_k}
$$
such that $n_k \in \Z_+$ and $n_{k+1} - n_k \ge 2$. This is the
so-called Zeckendorf representation of $n$ (which, incidentally,
was published by Lekkerkerker some twenty years before Zeckendorf). It
is found by the greedy algorithm, that is, $F_{n_K}$ is set to be
the largest Fibonacci number less than or equal to $n$, $n$ is
replaced by $n - F_{n_K}$, and the process is repeated until we
have a zero remainder. The property $n_{k+1} - n_k \ge 2$ follows
from the way the algorithm works together with the recursion for
the Fibonacci numbers. Uniqueness of this representation in turn
follows from $n_{k+1} - n_k \ge 2$.

By construction, we have
\begin{equation}\label{e.zecken1}
F_{n_K} \le n < F_{n_K + 1} \le 2 F_{n_K}.
\end{equation}
We will also need a relation between $n$ and $K$, that is, the
number of terms in the Zeckendorf representation of $n$. The
algorithm to compute $K = K(n)$ is the following. Start with two
words over the positive integers, $1$ and $1$. The next word is
obtained by writing down the previous word, following by the word
preceding the previous word, but this time with every symbol
increased by one. Now iterate this procedure:
$$
1 , \quad 1 , \quad 12 , \quad 122 , \quad 12223 , \quad 12223233 , \quad 1222323323334 , \ldots
$$
Concatenate all the words to obtain a semi-infinite word over the positive integers:
$$
111212212223122232331222323323334 \ldots
$$
The $n$-th symbol in this semi-infinite sequence is $K(n)$. Notice
that the lengths of these words follow the Fibonacci sequence.
Moreover, only every other word has an increase in the maximum
value of its entries, relative to the previous one, and the
increase is by one whenever it happens. This shows that
\begin{equation}\label{e.zecken2}
F_{2(K - 1)} \le n
\end{equation}

\begin{proof}[Proof of Lemma~\ref{l.b2}.]
From the Zeckendorf representation of $n \ge 1$, one finds that
$$
M(n,0,\omega=0,E) = M_{n_0} M_{n_1} \cdots M_{n_K};
$$
compare the beginning of the proof of \cite[Theorem~1]{IT}.

Let us prove by induction that with
$$
b_V = (3 + V)^{1/2},
$$
we have for every $k \ge 1$,
\begin{equation}\label{e.est}
\| M_{n_0} M_{n_1} \cdots M_{n_k} \| \le C (5 + 2V)^k b_V^{-n_0 +
2(k-1)} \left(a_Vb_V\right)^{n_k}.
\end{equation}

Consider first the case $k = 1$. From Lemmas~\ref{l.b1} and \ref{l.b2prep}, we may infer that
\begin{align*}
\|M_{n_0} M_{n_1}\| & \le |P_{n_1 - n_0}^{(1)}| \cdot \|M_{n_1}\| + |P_{n_1 - n_0}^{(2)}| \cdot
\|M_{n_1 - 1}\| + |P_{n_1 - n_0}^{(3)}| \cdot \|M_{n_1 - 2}\| + |P_{n_1 - n_0}^{(4)}| \\
& \le \sum_{j = 1}^4 |P_{n_1 - n _0}^{(j)}| C a_V^{n_1} \\
& \le C (5 + 2V) (3 + V)^{\lfloor (n_1 - n_0)/2 \rfloor} a_V^{n_1} \\
& \le C (5 + 2V) b_V^{-n_0} \left(a_Vb_V\right)^{n_1}.
\end{align*}
Thus, the estimate \eqref{e.est} holds for $k = 1$.

Now assume that \eqref{e.est} holds for $k \in \{ 1 , \ldots ,
\ell \}$ and consider the case $k = \ell + 1$. Using
Lemma~\ref{l.b2prep} again, we see that
\begin{align*}
\| M_{n_0} M_{n_1} \cdots M_{n_\ell} M_{n_{\ell + 1}} \| & = \| (M_{n_0} M_{n_1}) M_{n_2} \cdots M_{n_\ell} M_{n_{\ell + 1}} \| \\
& \le |P_{n_1 - n_0}^{(1)}| \cdot \|M_{n_1} M_{n_2} \cdots M_{n_\ell} M_{n_{\ell + 1}}\| \\
& \quad + |P_{n_1 - n_0}^{(2)}| \cdot \|M_{n_1 - 1} M_{n_2} \cdots M_{n_\ell} M_{n_{\ell + 1}}\| \\
& \quad + |P_{n_1 - n_0}^{(3)}| \cdot \|M_{n_1 - 2} M_{n_2} \cdots M_{n_\ell} M_{n_{\ell + 1}}\| \\
& \quad + |P_{n_1 - n_0}^{(4)}| \cdot \|M_{n_2} \cdots M_{n_\ell} M_{n_{\ell + 1}}\|\\
& \le |P_{n_1 - n_0}^{(1)}| \cdot C (5 + 2V)^\ell b_V^{-n_1 + 2(\ell-1)} (ab)^{n_{\ell + 1}}\\
& \quad + |P_{n_1 - n_0}^{(2)}| \cdot C (5 + 2V)^\ell b_V^{-n_1 + 1 + 2(\ell-1)} \left(a_Vb_V\right)^{n_{\ell + 1}}\\
& \quad + |P_{n_1 - n_0}^{(3)}| \cdot C (5 + 2V)^\ell b_V^{-n_1 + 2 + 2(\ell-1)} \left(a_Vb_V\right)^{n_{\ell + 1}}\\
& \quad + |P_{n_1 - n_0}^{(4)}| \cdot C (5 + 2V)^\ell b_V^{-n_2 + 2(\ell-2)} \left(a_Vb_V\right)^{n_{\ell + 1}}\\
& \le \left( \sum_{j = 1}^4 |P_{n_1 - n _0}^{(j)}| \right) C (5 + 2V)^\ell b_V^{-n_1 + 2\ell} \left(a_Vb_V\right)^{n_{\ell + 1}}\\
& \le (5 + 2V) b^{n_1 - n_0} C (5 + 2V)^\ell b_V^{-n_1 + 2\ell} \left(a_Vb_V\right)^{n_{\ell + 1}} \\
& = C (5 + 2V)^{\ell + 1} b_V^{-n_0 + 2((\ell + 1) - 1)}
\left(a_Vb_V\right)^{n_{\ell + 1}}.
\end{align*}
We conclude that \eqref{e.est} holds for $k = \ell + 1$.

Therefore,
$$
\|M(n,0,\omega=0,E)\| \le C (5 + 2V)^K b_V^{-n_0 + 2(K-1)}
\left(a_Vb_V\right)^{n_K}.
$$
It follows that
\begin{align*}
\limsup_{n \to \infty} & \frac{\log \|M(n,0,\omega=0,E)\|}{\log n} \le \limsup_{n \to \infty}
\frac{\log \left( C (5 + 2V)^K b_V^{-n_0 + 2(K-1)} \left(a_Vb_V\right)^{n_K} \right)}{\log n} \\
& \le \limsup_{n \to \infty} \frac{K \log (5 + 2V) + 2K \log b_V + n_K \log \left(a_Vb_V\right)}{\log n} \\
& \le \limsup_{n \to \infty} \frac{\frac{\log n}{2 \log
\frac{\sqrt{5} + 1}{2}} \log (5 + 2V) + 2 \frac{\log n}{2 \log
\frac{\sqrt{5} + 1}{2}} \log b_V + \frac{\log n}{\log
\frac{\sqrt{5} + 1}{2}}
\log \left(a_Vb_V\right)}{\log n} \\
& = \frac{\log (5 + 2V)^{1/2} + \log b_V + \log \left(a_Vb_V\right)}{\log \frac{\sqrt{5} + 1}{2}} \\
& = \frac{\log [(5 + 2V)^{1/2} (3 + V) a_V]}{\log \frac{\sqrt{5} +
1}{2}},
\end{align*}
where we used \eqref{e.zecken1} and \eqref{e.zecken2} in the third step. Since these estimates are uniform in $E \in \Sigma_V$, the result follows.
\end{proof}

In the third step, we can now turn to the

\begin{proof}[Proof of Theorem~\ref{t.tmest}.]
Suppose $V > 0$ and $\zeta$ obeys \eqref{e.zetadef}. We will prove the following estimate,
\begin{equation}\label{f.tmest2}
\|M(V,E,w)\| \le C |w|^\zeta \quad \text{ for every } V > 0, \; E \in \Sigma_V, \text{ and } w \in \mathcal{W}_u,
\end{equation}
from which \eqref{f.tmest} immediately follows.

Let $V,E,w$ as in \eqref{f.tmest2} be given. As explained in \cite[Proof of Lemma~5.2]{DL99a}, we can write
$$
w = xyz,
$$
where $y \in \{a,b\}^*$ is a word of length $2$ and $x^R$ (the reversal of $x$) and $z$ are prefixes of $u$. Thus,
\begin{equation}\label{e.tmest3}
\|M(V,E,w)\| \le \|M(V,E,x)\| \cdot \|M(V,E,y)\| \cdot \|M(V,E,z)\|.
\end{equation}
It follows from \cite[Lemma~5.1]{DL99a} that
\begin{equation}\label{e.tmest4}
\|M(V,E,x^R)\| = \|M(V,E,x)\| .
\end{equation}
Moreover, Lemma~\ref{l.b2} yields
\begin{equation}\label{e.tmest5}
\|M(V,E,x^R)\| \le \tilde C |x|^\zeta
\end{equation}
and
\begin{equation}\label{e.tmest6}
\|M(V,E,z)\| \le \tilde C |z|^\zeta.
\end{equation}
Since $\zeta > 1$ and $y$ has length $2$, \eqref{f.tmest2} follows from \eqref{e.tmest3}--\eqref{e.tmest6}.
\end{proof}

\begin{proof}[Proof of Theorem~\ref{t.solest}.]
The upper bound in \eqref{f.solest} follows from
Theorem~\ref{t.tmest}. The lower bound in \eqref{f.solest} will be
extracted here from \cite{DKL} since it is not made explicit
there.

Denote $U(n) = (u(n+1) , u(n))^T$ and consider the associated
local $\ell^2$-norms $\|U\|_L$. Power-law bounds for $\|U\|_L$
correspond in a natural way to power-law bounds for $\|u\|_L$, so
let us discuss the former object. By considering squares adjacent
to the origin and Gordon's two-block method, Damanik, Killip, and
Lenz showed that
$$
\|U\|_{F_{8n}} \ge \left( 1 + \frac{1}{(2+2V)^2} \right)^{n/2};
$$
see \cite[Lemma~4.1]{DKL}. We find
$$
\liminf_{n \to \infty} \frac{\log \|U\|_{F_{8(n-1)}}}{\log F_{8n}}
\ge \frac{\log \left( 1 + \frac{1}{(2+2V)^2} \right)}{16 \cdot
\log \left( \frac{\sqrt{5} + 1}{2} \right)},
$$
uniformly in $\omega \in \T$ and $E \in \Sigma_V$. This, together
with monotonicity, yields the asserted lower bound.
\end{proof}

\subsection{Spectral Measures}

Given $V > 0$ and $\omega \in \T$, let us consider the operator
$H_{V,\omega}$. Since this operator is self-adjoint, the spectral
theorem associates with each $\psi \in \ell^2(\Z)$ a Borel measure
$\mu_\psi$ that is supported by $\Sigma_V$ and obeys
$$
\left\langle \psi , g\left(H_{V,\omega}\right) \psi \right\rangle
= \int g (E) \, d\mu_\psi(E)
$$
for every bounded measurable function $g$ on $\Sigma_V$. It
follows from Theorem~\ref{t.solest} that each of these spectral
measures gives zero weight to sets of small Hasudorff dimension.
More precisely, we have the following result.

\begin{Cor}\label{c.alphacont}
For every $V > 0$, every $\omega$, and every
$$
\alpha < \frac{2 \log \left( 1 + \frac{1}{(2+2V)^2} \right)}{\log
\left( 1 + \frac{1}{(2+2V)^2} \right) + 16 \cdot \log \left(
\frac{\sqrt{5} + 1}{2} \right) + 16 \log [(5 + 2V)^{1/2}(3+V)
a_V]},
$$
we have that $\mu_\psi(S) = 0$ for every $\psi \in \ell^2(\Z)$ and
every Borel set $S$ with $h^\alpha(S) = 0$.
\end{Cor}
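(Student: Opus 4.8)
The plan is to obtain Corollary~\ref{c.alphacont} as a direct consequence of the two-sided solution bounds in Theorem~\ref{t.solest} together with the power-law subordinacy theory of Jitomirskaya and Last, exactly as in \cite{DKL}; the only genuinely new ingredient is the explicit dependence of the exponents $\gamma_\ell,\gamma_u$ on $V$, which Theorem~\ref{t.solest} already supplies. Recall the mechanism: suppose that, uniformly over the phase $\omega\in\T$ and over energies $E$ in a Borel set $A\subseteq\R$, every solution $u$ of $H_{V,\omega}u=Eu$ normalized by $|u(0)|^2+|u(1)|^2=1$ obeys a two-sided power-law bound $c\,L^{\gamma_\ell}\le\|u\|_L\le C\,L^{\gamma_u}$ for all $L\ge1$, and the analogous bound to the left of the origin. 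Then the subordinacy estimate — which at the critical length $L(\varepsilon)$ defined by $\|u_1\|_{L(\varepsilon)}\|u_2\|_{L(\varepsilon)}=\frac{1}{2\varepsilon}$ relates $|m_\pm(E+i\varepsilon)|$, and $\mathrm{Im}\,m_\pm(E+i\varepsilon)$, to the ratio $\|u_1\|_{L(\varepsilon)}/\|u_2\|_{L(\varepsilon)}$ — shows that the Borel transforms of the two half-line spectral measures, and hence the diagonal Green function $G(0,0;E+i\varepsilon)$ of the whole-line operator, satisfy $\limsup_{\varepsilon\downarrow0}\varepsilon^{1-\alpha}\,\mathrm{Im}\,G(0,0;E+i\varepsilon)<\infty$ uniformly on $A$, with
$$
\alpha=\frac{2\gamma_\ell}{\gamma_\ell+\gamma_u}.
$$
By the standard correspondence between the boundary growth of Borel transforms and $\alpha$-continuity of the underlying measure, this makes $\mu_{\delta_0}|_A$ and $\mu_{\delta_1}|_A$ both $\alpha$-continuous, i.e.\ annihilating Borel sets of zero $h^\alpha$-measure.

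First I would check that Theorem~\ref{t.solest} furnishes precisely these hypotheses with $A=\Sigma_V$. For each $V>0$ one may take $\gamma_\ell$ arbitrarily close to $\frac{\log(1+(2+2V)^{-2})}{16\log\frac{\sqrt5+1}{2}}$ from below and $\gamma_u$ arbitrarily close to $1+\frac{\log[(5+2V)^{1/2}(3+V)a_V]}{\log\frac{\sqrt5+1}{2}}$ from above, and \eqref{f.solest} holds with constants uniform in $\omega$ and in $E\in\Sigma_V$. Since the Fibonacci subshift is reflection invariant, iterating the difference equation to the left of $0$ produces, after a shift of the phase, an operator from the same family, so the identical power-law bounds hold on the left half-line. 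As $\sigma(H_{V,\omega})=\Sigma_V$, the measures $\mu_{\delta_0}$ and $\mu_{\delta_1}$ are supported on $\Sigma_V$, so a uniform bound over $\Sigma_V$ suffices.

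Next I would optimize the exponent. Writing $L_1=\log(1+(2+2V)^{-2})$, $L_2=\log\frac{\sqrt5+1}{2}$ and $L_3=\log[(5+2V)^{1/2}(3+V)a_V]$, the map $(\gamma_\ell,\gamma_u)\mapsto\frac{2\gamma_\ell}{\gamma_\ell+\gamma_u}$ is increasing in $\gamma_\ell$ and decreasing in $\gamma_u$, so its supremum over the admissible pairs is the limiting value
$$
\frac{2\cdot\frac{L_1}{16L_2}}{\frac{L_1}{16L_2}+\frac{L_2+L_3}{L_2}}=\frac{2L_1}{L_1+16L_2+16L_3},
$$
which is exactly the bound appearing in the statement. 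Hence $\mu_{\delta_0}+\mu_{\delta_1}$ annihilates every Borel set of zero $h^\alpha$-measure for every $\alpha$ below this value. Finally, because $H_{V,\omega}$ is a second-order difference operator, $\{\delta_0,\delta_1\}$ is a generating set: if a Borel set $S$ has $(\mu_{\delta_0}+\mu_{\delta_1})(S)=0$, then $\mu_{\delta_n}(S)=0$ for every $n$, so $E_{H_{V,\omega}}(S)\delta_n=0$ for all $n$; since $E_{H_{V,\omega}}(S)$ commutes with $H_{V,\omega}$ it annihilates the dense span of $\{H_{V,\omega}^k\delta_j\}$, whence $E_{H_{V,\omega}}(S)=0$ and $\mu_\psi(S)=\|E_{H_{V,\omega}}(S)\psi\|^2=0$ for every $\psi\in\ell^2(\Z)$. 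This gives the claim.

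The final arithmetic and the reduction to the generating vectors $\delta_0,\delta_1$ are routine. The step that has to be quoted with care — though it is by now entirely standard; see \cite{DKL} and the references therein, and compare the discussion preceding Theorem~\ref{t.solest} — is the Jitomirskaya--Last subordinacy estimate (including the fact that $\mathrm{Im}\,m$ is comparable to $|m|$ at the critical length) and its whole-line incarnation via the two half-line Weyl functions. The exponent $\alpha=2\gamma_\ell/(\gamma_\ell+\gamma_u)$ emerges because the extreme configuration $\|u_1\|_L\sim L^{\gamma_u}$, $\|u_2\|_L\sim L^{\gamma_\ell}$ forces $L(\varepsilon)\sim\varepsilon^{-1/(\gamma_\ell+\gamma_u)}$ and hence $|m(E+i\varepsilon)|\sim L(\varepsilon)^{\gamma_u-\gamma_\ell}\sim\varepsilon^{-(\gamma_u-\gamma_\ell)/(\gamma_u+\gamma_\ell)}=\varepsilon^{-(1-\alpha)}$, together with the complementary lower bound on $\mathrm{Im}\,m$ at that same length.
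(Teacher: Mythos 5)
Your proof follows the same route the paper takes: the paper's proof is a one-line appeal to Theorem~\ref{t.solest} together with \cite[Theorem~1]{DKL}, the latter being precisely the uniform Jitomirskaya--Last power-law subordinacy result that converts two-sided solution bounds with exponents $\gamma_\ell,\gamma_u$ into $\alpha$-continuity with $\alpha=\frac{2\gamma_\ell}{\gamma_\ell+\gamma_u}$. You have simply unpacked the mechanism behind that citation (the critical-length estimate on $m_\pm$, reflection invariance for the left half-line, and the reduction to the cyclic pair $\delta_0,\delta_1$), and your final arithmetic reproduces the stated exponent exactly.
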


\begin{proof}
This is an immediate consequence of Theorem~\ref{t.solest} and
\cite[Theorem~1]{DKL}.
\end{proof}

\subsection{Transport Exponents}

Absolute continuity of spectral measures with respect to Hausdorff
measures is important because it implies lower bounds for
transport exponents via the Guarneri-Combes-Last Theorem. Let us
recall this connection briefly. Given $H_{V,\omega}$ and $\psi$ as
above, consider the Schr\"odinger equation $i \partial_t \psi(t) =
H_{V,\omega} \psi(t)$ with initial condition $\psi(0) = \psi$.
Then, the unique solution of this initial-value problem is given
by $\psi(t) = e^{-itH_{V,\omega}} \psi$. To measure the spreading
of a wavepacket, one considers time-averaged moments of the
position operator. This is of course mainly of interest when
$\psi(0) = \psi$ is well-localized and in fact, in $\ell^2(\Z)$
one usually considers the canonical initial state $\psi =
\delta_0$.

One is interested in the growth of $\langle \langle |X|_\psi^p
\rangle \rangle(T)$ as $T \to \infty$ for $p > 0$, where
$$
\langle |X|_\psi^p \rangle(t) = \sum_{n \in \Z} |n|^p |\langle
\delta_n , e^{-itH_{V,\omega}} \psi \rangle |^2
$$
and the time average of a $t$-dependent function $f$ is given by
either
\begin{equation}\label{e.cesaro}
\langle f \rangle(T) = \frac{1}{T} \int_0^T f(t) \, dt
\end{equation}
or
\begin{equation}\label{e.abelian}
\langle f \rangle(T) = \frac{2}{T} \int_0^\infty e^{-2t/T} f(t) \,
dt.
\end{equation}
We will indicate in the results below which time-average is
involved. However, for compactly supported (or fast-decaying)
$\psi$, all the results mentioned below hold for both types of
time-average.

To measure the power-law growth of $\langle \langle |X|^p \rangle
\rangle(T)$, define
$$
\beta_\psi^+ (p) = \limsup_{T \to \infty} \frac{\log \langle
\langle |X|_\psi^p \rangle \rangle(T)}{p \log T}
$$
and
$$
\beta_\psi^- (p) = \liminf_{T \to \infty} \frac{\log \langle
\langle |X|_\psi^p \rangle \rangle(T)}{p \log T}.
$$

The Guarneri-Combes-Last Theorem (see \cite[Theorem~6.1]{La})
states that if $\mu_\psi$ is not supported by a Borel set $S$ with
$h^\alpha(S) = 0$, then $\beta^\pm(p) \ge \alpha$ for every $p >
0$, where the time-average is given by \eqref{e.cesaro}.

\begin{Cor}\label{c.betabound}
For every $V > 0$, $\omega \in \T$, $0 \not= \psi \in \ell^2(\Z)$,
and $p > 0$, we have
$$
\beta_\psi^\pm(p) \ge \frac{2 \log \left( 1 + \frac{1}{(2+2V)^2}
\right)}{\log \left( 1 + \frac{1}{(2+2V)^2} \right) + 16 \cdot
\log \left( \frac{\sqrt{5} + 1}{2} \right) + 16 \log [(5 +
2V)^{1/2}(3+V) a_V]},
$$
where the time-average is given by \eqref{e.cesaro}.
\end{Cor}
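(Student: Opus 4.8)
The plan is to deduce this directly from Corollary~\ref{c.alphacont} together with the Guarneri-Combes-Last Theorem, whose relevant form is recalled just above the statement (see \cite[Theorem~6.1]{La}). First I would fix $V > 0$, $\omega \in \T$, a nonzero vector $\psi \in \ell^2(\Z)$, and $p > 0$, and abbreviate by $\alpha_0 = \alpha_0(V)$ the quantity on the right-hand side of the asserted bound, that is,
$$
\alpha_0 = \frac{2 \log \left( 1 + \frac{1}{(2+2V)^2} \right)}{\log \left( 1 + \frac{1}{(2+2V)^2} \right) + 16 \cdot \log \left( \frac{\sqrt{5} + 1}{2} \right) + 16 \log [(5 + 2V)^{1/2}(3+V) a_V]}.
$$

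Next I would fix an arbitrary $\alpha < \alpha_0$. Corollary~\ref{c.alphacont} then guarantees that $\mu_\psi(S) = 0$ for every Borel set $S$ with $h^\alpha(S) = 0$; in particular, since $\mu_\psi$ is a nonzero measure (because $\psi \ne 0$ and the spectral theorem produces a probability measure after normalization), it cannot be supported on any Borel set of zero $h^\alpha$-measure. Feeding this into the Guarneri-Combes-Last Theorem yields $\beta_\psi^\pm(p) \ge \alpha$, where the time-average is the Ces\`aro average \eqref{e.cesaro}. Since $\alpha < \alpha_0$ was arbitrary, passing to the supremum over admissible $\alpha$ (equivalently, letting $\alpha \uparrow \alpha_0$) produces $\beta_\psi^\pm(p) \ge \alpha_0$, which is exactly the claimed inequality.

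I do not expect any genuine obstacle here: the whole argument is essentially a one-line consequence of a result established earlier in this section (Corollary~\ref{c.alphacont}, itself built on Theorem~\ref{t.solest}) and a cited black-box theorem. The only point worth flagging is that Corollary~\ref{c.alphacont} is phrased with a strict inequality on the exponent $\alpha$, so the limiting step $\alpha \uparrow \alpha_0$ is genuinely needed to land on the sharp constant $\alpha_0$; this is harmless because the conclusion $\beta_\psi^\pm(p) \ge \alpha$ is monotone in $\alpha$ and hence stable under this supremum.
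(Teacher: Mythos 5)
Your argument is correct and matches the paper's own (one-line) proof exactly: both deduce the bound by combining Corollary~\ref{c.alphacont} with the Guarneri--Combes--Last Theorem. The limiting step $\alpha \uparrow \alpha_0$ that you flag is a harmless but appropriate bit of bookkeeping, implicitly folded into the paper's ``immediate.''
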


\begin{proof}
Immediate from the Guarneri-Combes-Last Theorem and
Corollary~\ref{c.alphacont}.
\end{proof}

We also have the following estimate, which holds for a special
initial state but which is better when $p$ is large:

\begin{Cor}\label{c.betabound2}
For every $V > 0$, $\omega \in \T$, and $p > 0$, we have
$$
\beta_{\delta_0}^\pm(p) \ge \frac{\log \frac{\sqrt{5} +
1}{2}}{\log \frac{\sqrt{5} + 1}{2} + \log [(5 + 2V)^{1/2} (3 + V)
a_V]} - \frac{3 \log [(5 + 2V)^{1/2} (3 + V) a_V]}{p\left( \log
\frac{\sqrt{5} + 1}{2} + \log [(5 + 2V)^{1/2} (3 + V) a_V] \right)
},
$$
where the time-average is given by \eqref{e.abelian}.
\end{Cor}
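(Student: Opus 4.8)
The plan is to deduce this corollary directly from Theorem~\ref{t.tmest}, via the general principle that a \emph{uniform} power-law upper bound on transfer matrix norms over the spectrum forces a power-law lower bound on the spreading of the wavepacket started at $\delta_0$. Set
$\zeta_0 = \frac{\log[(5+2V)^{1/2}(3+V)a_V]}{\log\frac{\sqrt5+1}{2}}$, the infimum of the admissible exponents in Theorem~\ref{t.tmest}, and note that, after dividing numerator and denominator by $\log\frac{\sqrt5+1}{2}$, the asserted bound is precisely $\beta^\pm_{\delta_0}(p)\ge \frac{1}{1+\zeta_0}\bigl(1-\tfrac{3\zeta_0}{p}\bigr)$. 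So it is enough to prove, for every $\zeta>\zeta_0$, that $\beta^\pm_{\delta_0}(p)\ge \frac{1}{1+\zeta}\bigl(1-\tfrac{3\zeta}{p}\bigr)$, and then let $\zeta\downarrow\zeta_0$, using continuity of the right-hand side in $\zeta$.

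Fix such a $\zeta$. By Theorem~\ref{t.tmest} (recall $\sigma(H_{V,\omega})=\Sigma_V$ carries the spectral measure $\mu_{\delta_0}$) there is a constant $C$ with $\|M(n,0,\omega,E)\|\le C|n|^\zeta$ for all $E\in\Sigma_V$ and all $n$, in both directions along $\Z$. The steps are then the standard ones of the Damanik--Tcheremchantsev analysis of quantum transport from transfer-matrix bounds: first, use Parseval's identity to rewrite the Abelian-averaged (see \eqref{e.abelian}) moments as
$$
\langle\langle |X|_{\delta_0}^p\rangle\rangle(T)=\frac{1}{\pi T}\int_{\R}\sum_{n\in\Z}|n|^p\,\bigl|G(n,0;E+i/T)\bigr|^2\,dE,\qquad G(n,0;z)=\langle\delta_n,(H_{V,\omega}-z)^{-1}\delta_0\rangle ,
$$
where the total mass $\frac{1}{\pi T}\int_{\R}\sum_n|G(n,0;E+i/T)|^2\,dE=1$ by unitarity; second, bound $\sum_{|n|\le L}|G(n,0;E+i/T)|^2$ from above in terms of $\sum_{|n|\le L}\|M(n,0,\omega,E)\|^2$ and the spectral parameter $1/T$, which shows that a definite fraction of that unit mass must sit beyond $L\asymp T^{1/(1+\zeta)}$; third, insert this into the moment integral to obtain $\langle\langle |X|_{\delta_0}^p\rangle\rangle(T)\gtrsim T^{\frac{p}{1+\zeta}}\cdot T^{-\frac{3\zeta}{1+\zeta}}$, the polynomial correction $T^{-3\zeta/(1+\zeta)}$ absorbing the losses in the second step; taking $\limsup$ and $\liminf$ of $\log(\cdot)/(p\log T)$ then yields the claimed bound for both $\beta^+_{\delta_0}(p)$ and $\beta^-_{\delta_0}(p)$.

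The one genuinely delicate point is the second-and-third step: pushing the Green's-function/transfer-matrix bookkeeping through with enough care to land the sharp correction $\tfrac{3\zeta}{p}$ rather than some larger constant over $p$. This is exactly what the general dynamical lower bounds of Damanik and Tcheremchantsev provide, so in the write-up I would simply quote that theorem and verify that its hypothesis --- a uniform power-law bound $\|M(n,0,\omega,E)\|\le C|n|^\zeta$ on a full-spectral-measure set --- is supplied verbatim by Theorem~\ref{t.tmest}. I would also note, as the surrounding text does, why this estimate complements Corollary~\ref{c.betabound}: it is sharper for large $p$ because it uses the transfer-matrix bound directly instead of passing through Hausdorff continuity of the spectral measures, at the price of being restricted to the initial state $\delta_0$ and to the Abelian time-average \eqref{e.abelian}, both of which are artifacts of the Parseval identity in the first step (for small $p$ the bound is $\le 0$ and hence vacuous).
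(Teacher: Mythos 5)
Your proposal is correct and takes essentially the same route as the paper: both deduce the corollary from Theorem~\ref{t.tmest} by invoking the general transfer-matrix-to-transport lower bound of Damanik, S\"ut\H{o}, and Tcheremchantsev (\cite[Theorem~1]{DST}; compare \cite[Theorem~1]{DT08}), which is exactly the Parseval/Green's-function result you outline. Your algebraic reduction of the claimed bound to $\frac{1}{1+\zeta_0}\bigl(1-\tfrac{3\zeta_0}{p}\bigr)$ and the limiting argument $\zeta\downarrow\zeta_0$ are the correct way to apply it.
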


\begin{proof}
This is a consequence of Theorem~\ref{t.tmest} and results of
Damanik, S\"ut\H{o}, and Tcheremchantsev (see
\cite[Theorem~1]{DST} and compare also \cite[Theorem~1]{DT08}).
\end{proof}

It can be shown that for compactly supported (or fast-decaying)
$\psi$, the transport exponents $\beta^\pm_\psi(p)$ are
non-decreasing functions of $p$ taking values in $[0,1]$. Thus, in
this case it is natural to consider the limits of these quantities
as $p \downarrow 0$ and $p \uparrow \infty$ and denote them by
$\beta^\pm_\psi(0)$ and $\beta^\pm_\psi(\infty)$, respectively.
The bounds in the two previous corollaries imply estimates for
$\beta^\pm_\psi(0)$ and $\beta^\pm_\psi(\infty)$ as well. However,
at small coupling, a better estimate for
$\beta^\pm_{\delta_0}(\infty)$ is obtained via a different route:

\begin{Cor}
We have
$$
\lim_{V \downarrow 0} \beta^\pm_{\delta_0}(\infty) = \lim_{V
\downarrow 0} \lim_{p \uparrow \infty} \beta^\pm_{\delta_0}(p) =
1,
$$
uniformly in $\omega \in \T$. Here, the time-average is given by
\eqref{e.abelian}.
\end{Cor}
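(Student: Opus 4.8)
\emph{Plan.} By definition $\beta^\pm_{\delta_0}(\infty)=\lim_{p\uparrow\infty}\beta^\pm_{\delta_0}(p)$, so the two quantities in the asserted identity are literally the same and only the limit in $V$ needs to be computed. The upper bound is immediate and uniform: for every $V$, every $\omega$, and every $p>0$ one has $\beta^\pm_{\delta_0}(p)\le 1$ because the group velocity of $H_{V,\omega}$ is bounded (the ballistic bound), hence $\beta^\pm_{\delta_0}(\infty)\le 1$; moreover $p\mapsto\beta^\pm_{\delta_0}(p)$ is non-decreasing and $[0,1]$-valued, so the limit defining $\beta^\pm_{\delta_0}(\infty)$ exists. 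Everything therefore reduces to proving $\liminf_{V\downarrow 0}\beta^\pm_{\delta_0}(\infty)\ge 1$, uniformly in $\omega$.

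The point of the ``different route'' is that Corollaries~\ref{c.betabound} and \ref{c.betabound2} do not help here: their right-hand sides converge to strictly positive constants well below $1$ as $V\downarrow 0$. Instead, I would use the lower bound on the fast transport exponent in terms of the \emph{fractal dimension of the spectrum}. Concretely, combining the lower bounds on transport exponents in terms of generalized fractal dimensions of the spectral measure (work of Barbaroux, Germinet, and Tcheremchantsev) with the fact that the spectral measure $\mu_{\delta_0}$ of $H_{V,\omega}$ is purely singular continuous with topological support equal to all of $\Sigma_V$, one obtains lower bounds of the form $\beta^-_{\delta_0}(\infty)\ge\underline{\dim}_B\Sigma_V$ and $\beta^+_{\delta_0}(\infty)\ge\overline{\dim}_B\Sigma_V$ --- precisely the type of estimate carried out for this model in \cite{DEGT}. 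The crucial structural input is then that, for small $V>0$, $\Sigma_V$ is a dynamically defined Cantor set (this paper together with \cite{DG09a}), so that its Hausdorff dimension and its upper and lower box counting dimensions all coincide with $\dim\Sigma_V$; hence $\beta^\pm_{\delta_0}(\infty)\ge\dim\Sigma_V$. Theorem~\ref{t.1} gives $\dim\Sigma_V\to 1$ as $V\downarrow 0$, and since $\Sigma_V$ does not depend on $\omega$ and the support identity for $\mu_{\delta_0}$ holds for every $\omega$, the whole estimate is uniform in $\omega$. Together with the ballistic upper bound this yields the claim.

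The step I expect to require the most care is transporting the dimension--transport lower bound into the weak coupling regime. The derivation in \cite{DEGT} is formulated at large coupling, but it uses only (i) that $\Sigma_V$ is a dynamically defined Cantor set and (ii) continuity and full support of the spectral measure of $\delta_0$; once (i) is supplied by the present paper and \cite{DG09a}, the argument applies verbatim. If one prefers not to quote \cite{DEGT}, one can instead run the Barbaroux--Germinet--Tcheremchantsev estimates directly, using the density of states measure $dN_V$ --- whose topological support is $\Sigma_V$ and which, by \eqref{e.idsformula}, is the $\omega$-average of the spectral measures of $\delta_0$ --- in place of $\mu_{\delta_0}$, followed by a short argument to restore uniformity in $\omega$. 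Either way, the only genuinely model-specific ingredient beyond Theorem~\ref{t.1} is the identity $\operatorname{supp}\mu_{\delta_0}=\Sigma_V$ together with enough regularity of $\mu_{\delta_0}$ (automatic here, $\mu_{\delta_0}$ being comparable to a natural measure on the hyperbolic set $\Omega_V$) to guarantee that its generalized dimension at parameter value $0$ really equals $\dim_B\Sigma_V$ and not something smaller.
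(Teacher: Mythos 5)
Your proposal matches the paper's argument: both invoke the bound $\beta^\pm_{\delta_0}(\infty)\ge\dim_B^\pm\Sigma_V$ from \cite[Theorem~3]{DEGT} and then apply $\dim\Sigma_V\to 1$ as $V\downarrow 0$ from Theorem~\ref{t.1}, with the upper bound $\beta^\pm_{\delta_0}(\infty)\le 1$ automatic from the ballistic bound. One small correction to your caveats: \cite[Theorem~3]{DEGT} is stated and proved for \emph{every} $V>0$ and $\omega\in\T$ (the large-coupling assumption enters \cite{DEGT} only when estimating $\dim\Sigma_V$ itself, not in the transport inequality), so no transfer to the weak-coupling regime is needed; moreover, the mechanism behind that theorem is the complex-analytic lower-bound technique of \cite{DST,DT08} rather than the Barbaroux--Germinet--Tcheremchantsev generalized-dimension bounds you cite, though your proposed alternative route would plausibly also work.
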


\begin{proof}
Damanik, Embree, Gorodetski, and Tcheremchantsev showed for the
Fibonacci Hamiltonian that $\beta^\pm_{\delta_0}(\infty) \ge
\dim_B^\pm(\Sigma_V)$ for every $V > 0$ and $\omega \in \T$; see
\cite[Theorem~3]{DEGT}. Since the right-hand side converges to one
as $V \downarrow 0$, the result follows.
\end{proof}

\noindent{Remarks.} (a) As pointed out above, while the papers
\cite{DEGT, DST, DT03} work with the time-average
\eqref{e.abelian}, the results above carry over to transport
exponents defined using \eqref{e.cesaro}.
\\[1mm]
(b) We find that $\beta^\pm_{\delta_0}(\infty)$, considered as a
function of $V$, extends continuously to zero. While we would
expect this also for other initial states $\psi$, it does not
follow from Corollary~\ref{c.betabound}. It would be interesting
to extend this continuity result to all fast-decaying initial
states or to exhibit one for which it fails.

\section{Consequences for Higher-Dimensional Models}\label{s.chdm}

\subsection{Lattice Schr\"odinger Operators with Separable Potentials}

Let $d \ge 1$ be an integer and assume that for $1 \le j \le d$,
we have bounded maps $V_j : \Z \to \R$. Consider the associated
Schr\"odinger operators on $\ell^2(\Z)$,
\begin{equation}\label{f.1doper}
[H_j \psi](n) = \psi(n+1) + \psi(n-1) + V_j(n) \psi(n).
\end{equation}
Furthermore, we let $V : \Z^d \to \R$ be given by
\begin{equation}\label{f.sumpot}
V(n) = V_1(n_1) + \cdots + V_d(n_d),
\end{equation}
where we express an element $n$ of $\Z^d$ as $n = (n_1,\ldots,n_d)$ with $n_j \in \Z$.

Finally, we introduce the Schr\"odinger operator on $\ell^2(\Z^d)$ with potential $V$, that is,
\begin{equation}\label{f.2doper}
[H \psi](n) = \Big( \sum_{j = 1}^d \psi(n+e_j) + \psi(n-e_j) \Big) + V(n) \psi(n).
\end{equation}
Here, $e_j$ denotes the element $n$ of $\Z^d$ that has $n_j = 1$ and $n_k = 0$ for $k \not= j$.

Potentials of the form \eqref{f.sumpot} and Schr\"odinger
operators of the form \eqref{f.2doper} are called separable. Let
us first state some known results for separable Schr\"odinger
operators.

\begin{Prop}\label{p.products}
{\rm (a)} The spectrum of $H$ is given by
$$
\sigma(H) = \sigma(H_1) + \cdots + \sigma(H_d).
$$
{\rm (b)} Given $\psi_1, \ldots, \psi_d \in \ell^2(\Z)$, denote by
$\mu_j$ the spectral measure corresponding to $H_j$ and $\psi_j$.
Furthermore, denote by $\mu$ the spectral measure corresponding to
$H$ and the element $\psi$ of $\ell^2(\Z^d)$ given by $\psi(n) =
\psi_1(n_1) \cdots \psi_d(n_d)$. Then,
$$
\mu = \mu_1 \ast \cdots \ast \mu_d.
$$
\end{Prop}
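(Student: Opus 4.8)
The plan is to reduce the statement to the tensor product structure of separable operators. First I would identify $\ell^2(\Z^d)$ with the $d$-fold Hilbert space tensor product $\ell^2(\Z) \otimes \cdots \otimes \ell^2(\Z)$ via $\delta_{(n_1,\dots,n_d)} \mapsto \delta_{n_1} \otimes \cdots \otimes \delta_{n_d}$. Checking the action of both sides on this canonical orthonormal basis, one sees that under this identification
$$
H = \sum_{j=1}^d I \otimes \cdots \otimes I \otimes \underbrace{H_j}_{j\text{-th factor}} \otimes I \otimes \cdots \otimes I =: \sum_{j=1}^d \widetilde H_j ,
$$
and that the vector $\psi$ with $\psi(n) = \psi_1(n_1)\cdots\psi_d(n_d)$ corresponds to $\psi_1 \otimes \cdots \otimes \psi_d$. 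The operators $\widetilde H_1, \dots, \widetilde H_d$ are bounded, self-adjoint, and pairwise commuting, hence possess a joint spectral resolution $E$; since they act on distinct tensor factors, $E$ is the product $E_1 \otimes \cdots \otimes E_d$ of the spectral measures $E_j$ of the $H_j$, and it is supported on $\sigma(H_1) \times \cdots \times \sigma(H_d)$. The functional calculus for commuting self-adjoint operators then gives $g(H) = \int g(x_1 + \cdots + x_d)\, dE(x_1,\dots,x_d)$ for bounded Borel $g$. (Alternatively, one may induct on $d$ and invoke the standard facts about $\sigma(A \otimes I + I \otimes B)$ and about spectral measures of such operators directly from the literature on tensor products of operators.)

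For part (a) I would prove both inclusions. If $\zeta \notin \sigma(H_1) + \cdots + \sigma(H_d)$, then --- each $\sigma(H_j)$ being compact, so is the sumset --- there is $\delta > 0$ with $|x_1 + \cdots + x_d - \zeta| \ge \delta$ on $\prod_j \sigma(H_j)$, so $(H - \zeta)^{-1} = \int (x_1 + \cdots + x_d - \zeta)^{-1}\, dE$ is a bounded operator and $\zeta \notin \sigma(H)$. For the reverse inclusion, given $\lambda_j \in \sigma(H_j)$ I would choose unit Weyl sequences $\phi_n^{(j)}$ with $\|(H_j - \lambda_j)\phi_n^{(j)}\| \to 0$; then $\phi_n^{(1)} \otimes \cdots \otimes \phi_n^{(d)}$ is a unit Weyl sequence for $H$ at $\lambda_1 + \cdots + \lambda_d$, whence $\lambda_1 + \cdots + \lambda_d \in \sigma(H)$. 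For part (b) it suffices to show that the finite, compactly supported Borel measures $\mu$ and $\mu_1 * \cdots * \mu_d$ on $\R$ have the same Fourier transform. Because the $\widetilde H_j$ commute and live on distinct factors, $e^{itH} = e^{itH_1} \otimes \cdots \otimes e^{itH_d}$, so
$$
\widehat\mu(t) = \langle \psi, e^{itH}\psi \rangle = \prod_{j=1}^d \langle \psi_j, e^{itH_j}\psi_j\rangle = \prod_{j=1}^d \widehat{\mu_j}(t) = \widehat{\mu_1 * \cdots * \mu_d}(t) ,
$$
and the uniqueness theorem for Fourier transforms of finite measures finishes the proof.

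This is essentially a textbook computation. The only point that needs a little care is the identification of the joint spectral measure of the $\widetilde H_j$ with the product $E_1 \otimes \cdots \otimes E_d$ (equivalently, the multiplicativity $e^{itH} = \bigotimes_j e^{itH_j}$), which I expect to be the main --- though still mild --- obstacle; everything else is a matter of unwinding definitions. In the write-up I would likely just verify the operator identity $H = \sum_j \widetilde H_j$ on the canonical basis and cite a standard reference (e.g.\ Reed--Simon) for the tensor product facts about spectra and spectral measures.
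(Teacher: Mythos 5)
Your proof is correct and uses essentially the same approach as the paper: identify $\ell^2(\Z^d)$ with the tensor product $\ell^2(\Z)^{\otimes d}$, observe that $H$ becomes $\sum_j \widetilde H_j$, and then read off the spectrum and the spectral measure of $\psi_1\otimes\cdots\otimes\psi_d$ from the tensor structure. The paper simply cites Reed--Simon (Theorems~II.10 and~VIII.33) and \cite{BS,S} for the spectral facts about sums of commuting operators on distinct tensor factors, whereas you supply self-contained arguments (Weyl sequences and a resolvent bound for part (a), and the factorization $e^{itH}=\bigotimes_j e^{itH_j}$ together with uniqueness of Fourier transforms for part (b)) --- both are fine, and you yourself note at the end that in a final write-up you would collapse these details to the same citations.
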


\begin{proof}
Recall the definition and properties of tensor products of Hilbert
spaces and operators on these spaces; see, for example,
\cite[Sections~II.4 and VIII.10]{RS}. It follows from
\cite[Theorem~II.10]{RS} that there is a unique unitary map $U$
from $\ell^2(\Z) \otimes \cdots \otimes \ell^2(\Z)$ ($d$ factors)
to $\ell^2(\Z^d)$ so that for $\psi_j \in \ell^2(\Z)$, the
elementary tensor $\psi_1 \otimes \cdots \otimes \psi_d$ is mapped
to the element $\psi$ of $\ell^2(\Z^d)$ given by $\psi(n) =
\psi_1(n_1) \cdots \psi_d(n_d)$. With this unitary map $U$, we
have
$$
U^* H U = \sum_{j = 1}^d \mathrm{Id} \otimes \cdots \otimes \mathrm{Id} \otimes H_j \otimes \mathrm{Id} \otimes \cdots \otimes \mathrm{Id},
$$
with $H_j$ being the $j$-th factor. Given this representation,
part~(a) now follows from \cite[Theorem~VIII.33]{RS} (see also the
example on \cite[p.~302]{RS}). Part~(b) follows from the proof of
\cite[Theorem~VIII.33]{RS}; see also \cite{BS} and \cite{S}.
\end{proof}

\subsection{A Consequence of the Newhouse Gap Lemma}\label{ss.gaplemma}

We have seen above that the spectrum of a product model is given
by the sum of the individual spectra. If these individual spectra
are Cantor sets and we want to show that their sum is not a Cantor
set, the following consequence of the Gap Lemma is useful.

\begin{Lm}\label{p.sumofcs} Suppose $C, K\subset \mathbb{R}^1$ are
Cantor sets with $\tau(C)\cdot \tau(K)>1$. Assume also that the size of the largest gap of $C$ is not greater than the diameter of $K$, and the size of the largest gap
of $K$ is not greater than the diameter of $C$. Then the sum $C+K$
is a closed interval.
\end{Lm}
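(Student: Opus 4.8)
The plan is to deduce Lemma~\ref{p.sumofcs} from the Gap Lemma by reformulating the statement $C + K = [\min C + \min K, \max C + \max K]$ as a statement about intersections of translated Cantor sets. First I would record the elementary observation that for a real number $t$, one has $t \in C + K$ if and only if $C \cap (t - K) \neq \emptyset$, where $t - K = \{ t - k : k \in K\}$ is again a Cantor set that is a reflected translate of $K$; in particular $\tau(t-K) = \tau(K)$, so the hypothesis $\tau(C)\cdot\tau(t-K) > 1$ holds for every $t$. Thus the Gap Lemma applies to the pair $C$ and $t - K$: for each $t$, either $C \cap (t-K) \neq \emptyset$ (which is what we want), or one of the two sets lies entirely in a (bounded or unbounded) gap of the other.

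Next I would fix $t$ in the interval $J := [\min C + \min K, \max C + \max K]$ and rule out the two "bad" alternatives. Write $I_C$ and $I_{t-K}$ for the convex hulls of $C$ and $t-K$. The choice $t \in J$ is exactly the condition that $I_C$ and $I_{t-K}$ have nonempty intersection (their right endpoints and left endpoints interleave appropriately); I would spell this out with the explicit endpoint inequalities. Hence neither set can lie in an \emph{unbounded} gap of the other, since an unbounded gap of one set is disjoint from the convex hull of that set and therefore, by the overlap of convex hulls, cannot contain the other set entirely unless that other set is a single point, which Cantor sets are not. So the only remaining bad case is that one set, say $t - K$, is contained in a \emph{bounded} gap $U$ of $C$. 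But then $\mathrm{diam}(t-K) = \mathrm{diam}(K) \le |U| \le$ (size of the largest gap of $C$), contradicting the hypothesis that the largest gap of $C$ is no larger than $\mathrm{diam}(K)$ — unless $t-K$ is a single point, again impossible. The symmetric argument rules out $C$ being contained in a bounded gap of $t-K$, using the hypothesis on the largest gap of $K$ versus $\mathrm{diam}(C)$. Therefore $C \cap (t-K) \neq \emptyset$, i.e. $t \in C+K$.

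Finally I would assemble these pieces: the inclusion $C + K \subseteq J$ is trivial, and the previous paragraph gives $J \subseteq C+K$, so $C+K = J$, a closed interval (it is nondegenerate since $C$ and $K$ each contain at least two points). I would also note that $C+K$ being closed follows independently from compactness of $C$ and $K$, though here it is automatic once we identify $C+K$ with the interval $J$.

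I expect the only real subtlety — not an obstacle so much as a point requiring care — is the precise bookkeeping in the case analysis coming out of the Gap Lemma: making sure that "contained in a gap" is handled for both the bounded and the two unbounded gaps, and checking that the degenerate possibility "one of the sets is a single point" genuinely cannot occur for a Cantor set. Everything else is a direct application of the Gap Lemma together with the two diameter hypotheses, which are visibly tailored exactly to kill the "contained in a bounded gap" alternatives.
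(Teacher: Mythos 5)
Your proposal is correct and follows essentially the same route as the paper's proof: reformulate $t \in C+K$ as $C \cap (t-K) \neq \emptyset$, apply the Gap Lemma, and then use the endpoint inequalities to exclude the disjoint-hull case and the diameter hypotheses to exclude the bounded-gap cases. The only difference is cosmetic — you spell out the unbounded-gap and single-point degeneracies a bit more explicitly than the paper does.
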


\begin{proof}
Denote $\min C=c_1$, $\max C=c_2$, $\min K=k_1$, $\max K=k_2$. Let us prove that
$$
C+K = [c_1+k_1, c_2+k_2].
$$
The inclusion ``$\subseteq$'' is obvious, so let us prove the inclusion ``$\supseteq$.''
Take an arbitrary point $x\in [c_1+k_1, c_2+k_2]$. Then, $x\in C+K$ if and only if $0\in
C+K-x=C-(x-K)$. Therefore,
$$
x\in C+K  \  \Leftrightarrow \ C\cap (x-K)\ne \emptyset.
$$
Since $\tau(C)\cdot \tau(x-K)=\tau(C)\cdot \tau(K)>1$, the Gap Lemma
implies that {\it a priori} there are only four possibilities:
\begin{enumerate}

\item the intervals $[c_1, c_2]$ and $[x-k_2, x-k_1]$ are disjoint;

\item the set $C$ is contained in a finite gap of the set $(x-K)$;

\item the set $(x-K)$  is contained in a finite gap of the set
$C$;

\item $C\cap (x-K)\ne \emptyset$.

\end{enumerate}
But the case (1) contradicts the assumption $x\in [c_1+k_1,
c_2+k_2]$, and the cases (2) and (3) are impossible due to our
assumption on the sizes of gaps and diameters of $C$ and $K$.
Therefore, we must have $C\cap (x-K)\ne \emptyset$ and hence $x\in C+K$.
\end{proof}

\subsection{The Square Fibonacci Hamiltonian}\label{ss.sfh}

Let us now discuss the (diagonal version of the) model studied by Even-Dar Mandel and
Lifshitz, namely the operator
\begin{align*}
[H^{(2)}_V \psi] (m,n) = & \psi(m+1,n) + \psi(m-1,n) + \psi(m,n+1)
+ \psi(m,n-1) + \\
& + V \left( \chi_{[1-\alpha , 1)}(m\alpha \!\!\! \mod 1) +
\chi_{[1-\alpha , 1)}(n\alpha \!\!\! \mod 1) \right) \psi(m,n)
\end{align*}
in $\ell^2(\Z^2)$. By Proposition~\ref{p.products}, we have
\begin{equation}\label{e.squarefib}
\sigma(H^{(2)}_V) = \Sigma_V + \Sigma_V.
\end{equation}

\begin{proof}[Proof of Theorem~\ref{t.4}.]
By \eqref{e.squarefib}, it suffices to show that $\Sigma_V +
\Sigma_V$ is an interval for $V > 0$ sufficiently small. By
Theorem~\ref{t.2}, there is $V_0 > 0$ such that $\tau(\Sigma_V) >
1$ for $V \in (0,V_0)$. For such $V$'s, Lemma~\ref{p.sumofcs} applies with $C = K = \Sigma_V$ and yields
$$
\Sigma_V + \Sigma_V = [ \min \Sigma_V + \min \Sigma_V , \max
\Sigma_V + \max \Sigma_V ],
$$
as desired.
\end{proof}

\begin{appendix}

\section{The Off-Diagonal Fibonacci Hamiltonian}\label{a.odfh}

The purpose of this appendix is to give complete proofs of the
basic spectral properties of the off-diagonal Fibonacci
Hamiltonian. This operator has been considered in many physics
papers and is the basic building block for the higher-dimensional
product models studied by Even-Dar Mandel and Lifshitz. The
mathematics literature on the Fibonacci model provides an
exhaustive study of the diagonal model, and it was always
understood that ``analogous results hold for the off-diagonal
model.'' For the reader's convenience, we make these analogous
results explicit here and hence provide in this paper a complete
treatment of the spectrum of the Even-Dar Mandel-Lifshitz product
model at weak coupling.

\subsection{Model and Results}\label{s.s1}

Let $a,b$ be two positive real numbers and consider the Fibonacci
substitution,
$$
S(a) = ab , \quad S(b) = a.
$$
This substitution rule extends to finite and one-sided infinite words by concatenation. For example,
$S(aba) = abaab$. Since $S(a)$ begins with $a$, one obtains a one-sided infinite sequence that is
invariant under $S$ by iterating the substitution rule on $a$ and taking a limit. Indeed, we have
\begin{equation}\label{f.fibrec}
S^k(a) = S^{k-1}(S(a)) = S^{k-1}(ab) = S^{k-1}(a) S^{k-1}(b) = S^{k-1}(a) S^{k-2}(a).
\end{equation}
In particular, $S^k(a)$ starts with $S^{k-1}(a)$ and hence there is
a unique one-sided infinite sequence $u$, the so-called Fibonacci
substitution sequence, that starts with $S^k(a)$ for every $k$. The
hull $\Omega_{a,b}$ is then obtained by considering all two-sided
infinite sequences that locally look like $u$,
$$
\Omega_{a,b} = \{ \omega \in \{a,b\}^\Z : \text{ every subword of
$\omega$ is a subword of } u \}.
$$
It is known that, conversely, every subword of $u$ is a subword of
every $\omega \in \Omega_{a,b}$. In this sense, $u$ and all elements
of the hull $\omega$ look exactly the same locally.

We wish to single out a special element of $\Omega_{a,b}$. Notice
that $ba$ occurs in $u$ and that $S^2(a) = aba$ begins with $a$
and $S^2(b) = ab$ ends with $b$. Thus, iterating $S^2$ on $b | a$,
where $|$ denotes the eventual origin, we obtain as a limit a
two-sided infinite sequence which belongs to $\Omega_{a,b}$ and
coincides with $u$ to the right of the origin. This element of
$\Omega_{a,b}$ will be denoted by $\omega_s$.

Each $\omega \in \Omega_{a,b}$ generates a Jacobi matrix $H_\omega$
acting in $\ell^2(\Z)$,
$$
(H_\omega \psi)_n = \omega_{n+1} \psi_{n+1} + \omega_{n}
\psi_{n-1}.
$$
With respect to the standard orthonormal basis $\{ \delta_n \}_{n
\in \Z}$ of $\ell^2(\Z)$, where $\delta_n$ is one at $n$ and
vanishes otherwise, this operator has the matrix
$$
\begin{pmatrix}
\ddots & \ddots & \ddots & {} & {} & {} \\
\ddots & 0 & \omega_{-1} & 0 & {} & {} \\
\ddots & \omega_{-1} & 0 & \omega_0 & 0 & \vphantom{\ddots} \\
{} & 0 & \omega_0 & 0 & \omega_1 & \ddots  \\
{} & {} & 0 & \omega_1 & 0 & \ddots   \\
{} & {} & {} & \ddots & \ddots & \ddots
\end{pmatrix}
$$
and it is clearly self-adjoint.

This family of operators, $\{H_\omega\}_{\omega \in \Omega_{a,b}}$,
is called the off-diagonal Fibonacci model. Of course, the structure
of the Fibonacci sequence disappears when $a = b$. In this case, the
hull consists of a single element, the constant two-sided infinite
sequence taking the value $a = b$, and the spectrum and the spectral
measures of the associated operator $H_\omega$ are well understood.
For this reason, we will below always assume that $a \not= b$.
Nevertheless, the limiting case, where we fix $a$, say, and let $b$
tend to $a$ is of definite interest.

Our first result concerns general properties of the spectrum of $H_\omega$. For $S \subset
\R$, we denote by $\text{dim}_H S$ the Hausdorff dimension of $S$
and by $\text{dim}_B S$ the box counting dimension of $S$ (which is
then implicitly claimed to exist).

\begin{Thm}\label{t.app1}
Suppose $a,b > 0$ and $a \not= b$. Then, there exists a compact
set $\Sigma_{a,b} \subset \R$ such that $\sigma(H_\omega) =
\Sigma_{a,b}$ for every $\omega \in \Omega_{a,b}$, and
\begin{itemize}

\item[{\rm (i)}] $\Sigma_{a,b}$ has zero Lebesgue measure.

\item[{\rm (ii)}] The Hausdorff dimension $\dim_H \Sigma_{a,b}$ is
an analytic function of $a$ and $b$.

\item[{\rm (iii)}]  $0<\dim_H \Sigma_{a,b}<1$.
\end{itemize}
\end{Thm}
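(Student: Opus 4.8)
The plan is to reproduce for the off-diagonal model the dynamical picture that underlies the diagonal case and then to read off (i)--(iii) from the general theory of dynamically defined Cantor sets. \textbf{Setting up the trace map.} Using the invariance of the Fibonacci substitution $S(a)=ab$, $S(b)=a$, the transfer matrices $M_k$ over the words $S^k(a)$ obey a relation of the form $M_{k+1}=M_{k-1}M_k$; after conjugating the one-step transfer matrices by a fixed diagonal matrix so that the relevant finite products lie in $\mathrm{SL}(2,\R)$ (their determinants telescope to a constant depending only on $a,b$), the normalized half-traces $x_k=\tfrac12\,\mathrm{Tr}\,M_k$ satisfy $x_{k+1}=2x_kx_{k-1}-x_{k-2}$ and admit the Fricke--Vogt conserved quantity $G(x_{k+1},x_k,x_{k-1})=I(a,b)$, where a direct computation gives the \emph{energy-independent} value $I(a,b)=\dfrac{(a^2-b^2)^2}{4a^2b^2}$, which is real-analytic in $(a,b)$ and strictly positive exactly when $a\ne b$. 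Hence the trace map $T(x,y,z)=(2xy-z,x,y)$ leaves invariant the \emph{fixed} smooth surface $S_{I(a,b)}$, which coincides (after the obvious reparametrization) with the surface $S_V$ of Section~\ref{s.prelim} for $V=V_{\mathrm{eff}}(a,b)=|a/b-b/a|$, a real-analytic, nowhere-vanishing function of $(a,b)$ on $\{a\ne b\}$. Moreover there is a line $\ell_{a,b}\subset S_{I(a,b)}$ and an affine parametrization $E\mapsto p_{a,b}(E)\in\ell_{a,b}$ so that, by the off-diagonal analogue of S\"ut\H{o}'s theorem (proved exactly as in the diagonal case via exponential dichotomy together with Gordon's two-block estimate), $E$ lies in $\sigma(H_\omega)$ if and only if the forward $T$-orbit of $p_{a,b}(E)$ is bounded. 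Independence of $\sigma(H_\omega)$ of $\omega$ then follows from minimality of the Fibonacci subshift $\Omega_{a,b}$ together with strong resolvent continuity of $\omega\mapsto H_\omega$; this produces the compact set $\Sigma_{a,b}$.

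\textbf{Hyperbolicity and the Cantor structure.} Because $S_{I(a,b)}=S_{V_{\mathrm{eff}}(a,b)}$ with $V_{\mathrm{eff}}(a,b)>0$, Theorem~\ref{Casdagli} applies directly: the set $\Omega_{I(a,b)}$ of points of $S_{I(a,b)}$ with bounded full $T$-orbit is a locally maximal hyperbolic set of the real-analytic surface diffeomorphism $T|_{S_{I(a,b)}}$, homeomorphic to a Cantor set, and it is the analytic continuation of a single hyperbolic set as the parameter $V_{\mathrm{eff}}$ ranges over $(0,\infty)$. The one point genuinely specific to the off-diagonal model is that the line $\ell_{a,b}$ is transversal to the stable lamination $W^s(\Omega_{I(a,b)})$; this is checked as in \cite[Lemma~5.5]{DG09a}. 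Consequently $\Sigma_{a,b}=\ell_{a,b}\cap W^s(\Omega_{I(a,b)})$ is a $C^{1+\alpha}$ --- indeed real-analytic --- dynamically defined Cantor set, and by the standard holonomy argument for hyperbolic sets of surface diffeomorphisms (\cite[Chapter~4]{PT}) its Hausdorff dimension equals that of the local stable Cantor set of $\Omega_{I(a,b)}$, the stable holonomies between transversals being Lipschitz.

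\textbf{Properties (i) and (iii).} By Bowen's equation, $\dim_H\Sigma_{a,b}=d(a,b)$ is the unique $s$ for which $P(-s\,\varphi_{a,b})=0$, where $P$ is the topological pressure over the Fibonacci subshift and $\varphi_{a,b}$ is the H\"older potential given by the logarithm of the expansion rate of $\big(T|_{S_{I(a,b)}}\big)^{-1}$ in the stable direction, read off along the coding. Since $\Omega_{I(a,b)}$ carries a nontrivial subshift, $P(0)=h_{\mathrm{top}}>0$, which forces $d(a,b)>0$; and since $\Sigma_{a,b}$ is a proper Cantor set --- it has gaps and, by bounded distortion, its complement has full Lebesgue measure, equivalently $T|_{\Omega_{I(a,b)}}$ admits no absolutely continuous invariant measure --- one has $P(-\varphi_{a,b})<0$, hence $d(a,b)<1$. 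This is (iii), and $\dim_H\Sigma_{a,b}<1$ immediately gives $|\Sigma_{a,b}|=0$, hence (i). (Alternatively (i) can be obtained directly, as in S\"ut\H{o}'s original argument for the diagonal case, by showing that the Lebesgue measure of the $n$-th trace-map approximation of $\Sigma_{a,b}$ tends to zero.)

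\textbf{Property (ii) and the main obstacle.} The family $V\mapsto\Omega_V$ depends real-analytically on $V\in(0,\infty)$ (structural stability of hyperbolic sets together with the analytic dependence of $T|_{S_V}$ on $V$), so the coding conjugacy and the stable bundle, and hence the potential $\varphi_V$, depend real-analytically on $V$; by Ruelle's results on the thermodynamic formalism of real-analytic hyperbolic maps, $(s,V)\mapsto P(-s\,\varphi_V)$ is jointly real-analytic and strictly decreasing in $s$ with nonvanishing $s$-derivative, so the implicit function theorem yields that $V\mapsto\dim_H(\text{stable Cantor set of }\Omega_V)$ is real-analytic. Composing with the real-analytic map $(a,b)\mapsto V_{\mathrm{eff}}(a,b)$ on $\{a,b>0,\ a\ne b\}$ gives (ii). I expect the real work to lie in two places: transporting the uniform-hyperbolicity-plus-transversality picture to the off-diagonal line over the full parameter range (the $V_{\mathrm{eff}}$-small end being exactly the weak-coupling analysis of Section~\ref{s.saas}, the rest handled by Theorem~\ref{Casdagli}), and the bookkeeping that rigorously identifies $\dim_H\!\big(\ell_{a,b}\cap W^s(\Omega_{I(a,b)})\big)$ with the analytic pressure-solution, i.e. controlling the transversality and the regularity of the stable holonomies uniformly in the parameters. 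The verification of the trace-map identities and of the off-diagonal S\"ut\H{o} theorem is routine.
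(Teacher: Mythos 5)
Your computation of the trace-map invariant is correct---$\frac{(a^2-b^2)^2}{4a^2b^2} = \frac{(a^2+b^2)^2}{4a^2b^2}-1$, matching the paper's Lemma~\ref{l.l1}---and the identification of $\ell_{a,b}$ and $V_{\mathrm{eff}}=|a/b-b/a|$ is right. But the proposal has a genuine gap at its central step. You assert that transversality of $\ell_{a,b}$ to $W^s(\Omega_{I(a,b)})$ ``is checked as in \cite[Lemma~5.5]{DG09a}'' and then build the entire proof of (ii)--(iii), and hence (i), on identifying $\Sigma_{a,b}$ as a dynamically defined Cantor set and running the thermodynamic formalism. However, \cite[Lemma~5.5]{DG09a} only establishes transversality for \emph{small} $V$, i.e.\ for $a$ and $b$ close; Casdagli's regime covers $V\ge 16$; and for intermediate $V_{\mathrm{eff}}$ this transversality is not known. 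The paper itself says so explicitly in the remark immediately after the proof of Theorems~\ref{t.app1} and \ref{t.app2}: transversality for arbitrary $a\ne b$ is precisely what is missing, and it would only be needed to extend properties (iv)--(vi) to all $a\ne b$. Your route therefore cannot be what the paper does, because the paper claims (i)--(iii) for \emph{all} $a\ne b$.

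The paper's actual argument is different and avoids transversality entirely. For (i) it does not derive zero measure from $\dim_H<1$; instead it runs Kotani theory for Jacobi matrices: Lemma~\ref{l.l3} gives a uniform Lyapunov exponent $\gamma$, Lemma~\ref{l.l4} (Kotani/Remling) shows $\{\gamma=0\}$ has zero Lebesgue measure, and Lemma~\ref{l.l5} identifies $\Sigma_{a,b}=\{\gamma=0\}$ using bounded traces to get polynomial transfer-matrix bounds on the spectrum. For (ii) and (iii) the paper simply cites Cantat's Theorem~6.5, which proves analyticity and $0<\dim_H<1$ without invoking transversality of $\ell_{a,b}$ to the stable lamination. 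Your thermodynamic-formalism sketch is a reasonable account of the \emph{small}-$V_{\mathrm{eff}}$ picture (and of what underlies (iv)--(vi) in Theorem~\ref{t.app2}), but it does not prove Theorem~\ref{t.app1} for all $a\ne b$.

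One smaller issue: your justification of $P(-\varphi_{a,b})<0$ is circular as written --- you invoke that the complement of $\Sigma_{a,b}$ has full Lebesgue measure (which is (i)) to conclude $d(a,b)<1$ and then deduce (i) from $d(a,b)<1$. The non-circular argument is simply that $\Omega_{I(a,b)}$ is a locally maximal hyperbolic set which is a Cantor set (not an attractor), so its stable slices already have Hausdorff dimension strictly less than one; intersection with a transversal line cannot increase the dimension.
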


More can be said about the spectrum when $a$ and $b$ are close to each other:

\begin{Thm}\label{t.app2}
There exists $\varepsilon_0>0$ such that if $a,b > 0$, $a \not= b$, and $\frac{a^2 +
b^2}{2ab} < 1 + \varepsilon_0$ {\rm (}in other words, if $a$ and
$b$ are close enough{\rm )}, then

\begin{itemize}

\item[{\rm (iv)}] The spectrum $\Sigma_{a,b}$ is a Cantor set that depends
continuously on $a$ and $b$ in the Hausdorff metric.

\item[{\rm (v)}] For every small $\delta>0$ and every $E \in
\Sigma_{a,b}$, we have
\begin{align*}
\dim_H \left( (E - \delta, E + \delta) \cap \Sigma_{a,b} \right) & =
\dim_B \left( (E - \delta, E + \delta) \cap \Sigma_{a,b} \right) \\
& = \dim_H \Sigma_{a,b}  =\dim_B \Sigma_{a,b}.
\end{align*}

\item[{\rm (vi)}] Denote $\alpha = \dim_H \Sigma_{a,b}$, then the
Hausdorff $\alpha$-measure of $\Sigma_{a,b}$ is positive and
finite.

\item[{\rm (vii)}] We have that
$$
\Sigma_{a,b} + \Sigma_{a,b} = [\min \Sigma_{a,b} + \min
\Sigma_{a,b} , \max \Sigma_{a,b} + \max \Sigma_{a,b}].
$$
\end{itemize}
\end{Thm}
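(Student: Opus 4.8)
The plan is to derive Theorem~\ref{t.app2} by running, for the off-diagonal model, the trace-map analysis that produced Theorems~\ref{Casdagli}, \ref{spectrum} and~\ref{t.2} together with Lemma~\ref{p.sumofcs} in the diagonal case, with $\frac{a^2+b^2}{2ab}$ playing the role that $1+\frac{V^2}{4}$ plays there (equivalently, with $|a-b|$ playing the role of the coupling constant $V$) and with the isotropic case $a=b$ playing the role of $V=0$.

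First I would set up the trace-map formalism for $H_\omega$, which is done earlier in this appendix: after a suitable normalization the transfer matrices $M_n$ along the Fibonacci words $s_n$ satisfy $M_{n+1}=M_{n-1}M_n$, so that the half-traces $x_n=\tfrac12\mathrm{Tr}\,M_n$ obey $x_{n+1}=2x_nx_{n-1}-x_{n-2}$ and lie on an invariant surface which (after normalization) has the form $x^2+y^2+z^2-2xyz=(\tfrac{a^2+b^2}{2ab})^2$, whose level thus depends only on $\frac{a^2+b^2}{2ab}$ and equals that of the Cayley cubic precisely when $a=b$; moreover $S_{a,b}$ degenerates, as $a\to b$, to the Cayley cubic, on which $T$ is a factor of the hyperbolic toral automorphism of Subsection~\ref{s.vequzero}, and the spectrum is parametrized by a line $\ell_{a,b}\subset S_{a,b}$ transversal to the stable lamination. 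These are the off-diagonal counterparts of the facts recalled in Section~\ref{s.prelim}. Granting them, the perturbative argument of~\cite{DG09a} shows that, once $\frac{a^2+b^2}{2ab}$ is close enough to $1$, the nonwandering set $\Omega_{a,b}$ of $T$ on $S_{a,b}$ is a locally maximal hyperbolic set homeomorphic to a Cantor set, and that $\Sigma_{a,b}$ is affinely equivalent to the dynamically defined Cantor set $\ell_{a,b}\cap W^s(\Omega_{a,b})$; this is~(iv), and continuity of $\Sigma_{a,b}$ in the Hausdorff metric follows from continuous dependence of the hyperbolic set, of its stable manifolds, and of $\ell_{a,b}$ on $(a,b)$. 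Parts~(v) and~(vi) then follow from the standard theory of dynamically defined Cantor sets (regular Cantor sets in the terminology of~\cite{PT}): such a set has coinciding local and global Hausdorff and box dimensions and finite positive Hausdorff measure in its dimension; see~\cite[Chapter~4]{PT}.

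For~(vii) I would prove the off-diagonal analogue of Theorem~\ref{t.2}, namely $\tau(\Sigma_{a,b})\to\infty$ as $a\to b$, following Section~\ref{s.saas}: for $a\ne b$ each of the four conic singularities of the Cayley cubic is replaced by a hyperbolic periodic orbit whose local invariant manifolds furnish a Markov partition for $T$ on $S_{a,b}$; the distance between the two periodic points born from a singularity --- hence the size of a freshly opened gap --- is of order $|a-b|$, while the adjacent bridges stay of bounded size; and the distortion estimate of Proposition~\ref{p.distortion}, whose proof uses only the local normally hyperbolic picture near the singularities (which is identical here), propagates this ratio to all gaps. In particular $\tau(\Sigma_{a,b})>1$ once $a$ and $b$ are close enough, which fixes $\varepsilon_0$. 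Moreover the diameter of $\Sigma_{a,b}$ stays bounded away from $0$ (it converges to that of $\sigma(H_\omega)$ in the case $a=b$, which is a nondegenerate interval), whereas the length of the largest gap of $\Sigma_{a,b}$ tends to $0$; hence for $a$, $b$ close the hypotheses of Lemma~\ref{p.sumofcs} hold with $C=K=\Sigma_{a,b}$, and it gives $\Sigma_{a,b}+\Sigma_{a,b}=[\min\Sigma_{a,b}+\min\Sigma_{a,b},\ \max\Sigma_{a,b}+\max\Sigma_{a,b}]$, which is~(vii).

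The main obstacle is the first step: carefully deriving the trace-map description of the off-diagonal Fibonacci Hamiltonian and verifying that its $a=b$ limit is precisely, up to an affine change of coordinates, the pseudo-Anosov picture of Subsection~\ref{s.vequzero}, so that the perturbation theory of~\cite{DG09a} and the distortion machinery of Section~\ref{s.saas} apply essentially verbatim. This is conceptually routine but has never been written out --- the off-diagonal model is always treated as ``analogous'' in the literature --- which is why it belongs in the appendix. A secondary point requiring a short argument is the uniformity, as $a\to b$, of the gap-to-bridge estimate near the singularities, i.e.\ that $\varepsilon_0$ can be chosen so that $\tau(\Sigma_{a,b})>1$ for every admissible pair $(a,b)$; this is the analogue of the uniform bound $\tau(\Sigma_V)\gtrsim V^{-1}$ in Theorem~\ref{t.2} and follows from the same compactness and bounded-distortion considerations.
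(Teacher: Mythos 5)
Your proposal follows the paper's own route essentially step by step: set up the off-diagonal trace map and identify the invariant surface level $\frac{(a^2+b^2)^2}{4a^2b^2}$, appeal to \cite{DG09a} (resp.\ \cite{Can}) for hyperbolicity of the non-wandering set and transversality of $\ell_{a,b}$ to the stable lamination once $\frac{a^2+b^2}{2ab}$ is close to $1$, conclude that $\Sigma_{a,b}$ is a dynamically defined Cantor set so that (iv)--(vi) follow from the standard theory in \cite{PT, Ma, MM, P}, and obtain (vii) by showing $\tau(\Sigma_{a,b})\to\infty$ as $a\to b$ and invoking Lemma~\ref{p.sumofcs}. This is exactly what the paper does; your write-up merely spells out a few intermediate points (the diameter/largest-gap check for Lemma~\ref{p.sumofcs}, the origin of the $|a-b|$-sized gaps at the split singularities) which the paper compresses into the phrase ``follows as in the diagonal case.''
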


Given these results, and especially Theorem~\ref{t.app2}.(vii), we can confirm rigorously the observation made by Even-Dar Mandel and Lifshitz in \cite{EL06,EL07} that the square Fibonacci Hamiltonian (based on the off-diagonal one-dimensional model) has no gaps in its spectrum for sufficiently small coupling.

Next, we turn to the spectral type of $H_\omega$.

\begin{Thm}\label{t.app3}
Suppose $a,b > 0$ and $a \not= b$. Then, for every $\omega \in
\Omega_{a,b}$, $H_\omega$ has purely singular continuous spectrum.
\end{Thm}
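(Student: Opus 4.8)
The plan is to establish the three standard ingredients of singular continuous spectrum separately: (i) absence of absolutely continuous spectrum, (ii) absence of point spectrum, and (iii) nonemptiness of the spectrum, which is automatic. For (i), I would invoke Theorem~\ref{t.app1}.(i): since $\sigma(H_\omega) = \Sigma_{a,b}$ has zero Lebesgue measure for every $\omega$, and the absolutely continuous spectrum is supported on a set of positive Lebesgue measure whenever it is nontrivial (indeed $\sigma_{ac}(H_\omega) \subseteq \overline{\sigma(H_\omega)}$ and the a.c.\ part lives on the essential support of the a.c.\ part of the spectral measure, which has positive Lebesgue measure), we conclude $\sigma_{ac}(H_\omega) = \emptyset$ for every $\omega$.

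The substantive part is (ii), the absence of eigenvalues. Here I would use the standard route for Fibonacci-type operators via the Gordon-type argument adapted to the off-diagonal (Jacobi) setting. The key point is that the Fibonacci substitution sequence $u$, and hence every $\omega \in \Omega_{a,b}$, is such that every element of the hull contains arbitrarily long squares $ww$ centered near any prescribed site — more precisely, the sequences $s_n s_n$ (or rather the relevant two-sided analog) appear with controlled positions, and $F_n \sim \mathrm{const}\cdot (\frac{\sqrt5+1}{2})^n$. Combining the repetitivity with the trace bound from the transfer matrix recursion — the Jacobi analog of Lemma~\ref{l.b1}(a), i.e.\ the conserved Fricke--Vogt-type invariant and the two-step recursion for the half-traces force $\|M_n\|$ to grow at most polynomially in $F_n$ along the spectrum — one applies the Gordon two-block lemma: if a potential (here, the off-diagonal weights) is, for infinitely many scales $\ell$, equal to itself on $[1,\ell]$ and on $[\ell+1,2\ell]$ (approximately, up to the off-diagonal bookkeeping), while the transfer matrices over one period stay bounded or grow subexponentially, then no solution of $H_\omega u = E u$ can be both square-summable at $+\infty$ and at $-\infty$. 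Hence $H_\omega$ has no eigenvalues for any $\omega$ and any $E$. Combined with (i), this gives purely singular continuous spectrum on $\Sigma_{a,b}$.

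The main obstacle I anticipate is the careful transcription of the Gordon argument to the off-diagonal model, since the transfer matrix for a Jacobi matrix at site $n$ is $\begin{pmatrix} (E - 0)/\omega_{n+1} & -\omega_n/\omega_{n+1} \\ 1 & 0 \end{pmatrix}$ (with the diagonal entries vanishing here), and one must track both the ``bond'' variables and the fact that a square in the \emph{sequence} $\omega$ translates into the correct algebraic repetition structure for the cocycle. In particular one needs the analog of S\"ut\H{o}'s trace-map analysis for the off-diagonal case — which the appendix has presumably set up in establishing Theorem~\ref{t.app1} — to know that for $E \in \Sigma_{a,b}$ the norms $\|M_n(E)\|$ do not grow too fast, so that the quantitative hypothesis of the Gordon lemma (subexponential, or merely $\liminf$-type control, growth of the one-period transfer matrices along a subsequence of scales) is met. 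Once the trace-map machinery from the proof of Theorem~\ref{t.app1} is in hand and the repetitivity of $\omega_s$-type sequences is recorded, the Gordon lemma applies essentially verbatim, and the absence of point spectrum — hence, with the zero-measure spectrum, purely singular continuous spectrum — follows for all $\omega \in \Omega_{a,b}$.
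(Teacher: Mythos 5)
Your proposal is correct and follows essentially the same route as the paper: zero Lebesgue measure of $\Sigma_{a,b}$ (Theorem~\ref{t.app1}.(i)) rules out absolutely continuous spectrum, and a Gordon two-block argument — every $\omega$ restricted to $\{n\ge1\}$ begins with a square of period $F_k$ whose one-period transfer matrix has trace $2x_k(E)$ by cyclic invariance, and $x_k$ is bounded on the spectrum by Lemma~\ref{l.l2} — rules out eigenvalues via Cayley–Hamilton. The only minor slips are that you cite Lemma~\ref{l.b1} (from the diagonal section) where you mean its off-diagonal analog (Lemma~\ref{l.l2}, eq.~\eqref{t.tmbound}), and the two-block version yields absence of $\ell^2$-decay at $+\infty$ alone, which already suffices; the ``both $+\infty$ and $-\infty$'' conclusion belongs to the three-block variant.
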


Throughout the rest of this Appendix we will only consider $a,b > 0$
with $a \not= b$. Theorems~\ref{t.app1} and \ref{t.app2} are proved in
Subsection~\ref{s.s2} and Theorem~\ref{t.app3} is proved in
Subsection~\ref{s.s3}.

\subsection{The Trace Map and its Relation to the Spectrum}\label{s.s2}

The spectral properties of $H_\omega$ are closely related to the
behavior of the solutions to the difference equation
\begin{equation}\label{f.eve}
\omega_{n+1} u_{n+1} + \omega_{n} u_{n-1} = E u_n.
\end{equation}
Denote
$$
U_n = \begin{pmatrix} u_n \\ \omega_{n} u_{n-1} \end{pmatrix}.
$$
Then $u$ solves \eqref{f.eve} (for every $n \in \Z$) if and only if $U$ solves
\begin{equation}\label{f.tmatrix}
U_{n} = T_\omega(n,E) U_{n-1},
\end{equation}
(for every $n \in \Z$), where
$$
T_\omega(n,E) = \frac{1}{\omega_{n}} \begin{pmatrix} E & -1 \\
\omega_{n}^2 & 0 \end{pmatrix}.
$$
Note that $\det T_\omega(n,E) = 1$. Iterating \eqref{f.tmatrix}, we find
$$
U_{n} = M_\omega(n,E) U_0,
$$
where
$$
M_\omega(n,E) = T_\omega(n,E) \times \cdots T_\omega(1,E).
$$
With the Fibonacci numbers $\{F_k\}$, generated by $F_0 = F_1 = 1$, $F_{k+1} = F_k + F_{k-1}$
for $k \ge 1$, we define
$$
x_k = x_k(E) = \frac12 \mathrm{Tr} M_{\omega_s}(F_k,E).
$$
For example, we have
\begin{align*}
M_{\omega_s}(F_1,E) & = \frac{1}{a} \begin{pmatrix} E & -1 \\ a^2 & 0 \end{pmatrix} \\
M_{\omega_s}(F_2,E) & = \frac{1}{b} \begin{pmatrix} E & -1 \\ b^2 & 0 \end{pmatrix} \frac{1}{a}
\begin{pmatrix} E & -1 \\ a^2 & 0 \end{pmatrix} = \frac{1}{ab} \begin{pmatrix} E^2 - a^2 & -E
\\ E b^2 & - b^2 \end{pmatrix} \\
M_{\omega_s}(F_3,E) & = \frac{1}{a} \begin{pmatrix} E & -1 \\ a^2 & 0 \end{pmatrix} \frac{1}{b}
\begin{pmatrix} E & -1 \\ b^2 & 0 \end{pmatrix} \frac{1}{a} \begin{pmatrix} E & -1 \\ a^2 & 0
\end{pmatrix} = \frac{1}{a^2 b} \begin{pmatrix} E^3 - E a^2 - E b^2 & -E^2 + b^2 \\ E^2 a^2 -
a^4 & - E a^2 \end{pmatrix}
\end{align*}
and hence
\begin{equation}\label{f.tmic}
x_1 = \frac{E}{2a} , \quad x_2 = \frac{E^2 - a^2 - b^2}{2ab},
\quad x_3 = \frac{E^3 - 2 E a^2 - E b^2}{2a^2 b}.
\end{equation}

\begin{Lm}\label{l.l1}
We have
\begin{equation}\label{f.tmrec}
x_{k+1} = 2 x_k x_{k-1} - x_{k-2}
\end{equation}
for $k \ge 2$. Moreover, the quantity
\begin{equation}\label{f.tminv}
I_k = x_{k+1}^2 + x_k^2 + x_{k-1}^2 - 2 x_{k+1} x_k x_{k-1} - 1
\end{equation}
is independent of both $k$ and $E$ and it is given by
$$
I = \frac{(a^2 + b^2)^2}{4a^2b^2} - 1.
$$
\end{Lm}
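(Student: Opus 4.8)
The plan is to follow the standard route used for the diagonal Fibonacci trace map, now applied to the Jacobi transfer matrices $M_{\omega_s}(F_k,E)$. The key structural input is that the substitution relation \eqref{f.fibrec}, $S^k(a) = S^{k-1}(a)\,S^{k-2}(a)$, induces a multiplicative recursion for transfer matrices. Indeed, the transfer matrix over a concatenation $vw$ equals (transfer over $w$)$\cdot$(transfer over $v$), and the prefix of $\omega_s$ of length $F_k$ is precisely $S^{k-1}(a)$; splitting this word as $S^{k-2}(a)\,S^{k-3}(a)$ gives
\[
M_{\omega_s}(F_{k+1},E) = M_{\omega_s}(F_{k-1},E)\, M_{\omega_s}(F_k,E) \qquad (k\ge 2).
\]
To handle the boundary cleanly I would extend the family by $M_{\omega_s}(F_0,E) := \tfrac1b\begin{pmatrix} E & -1 \\ b^2 & 0\end{pmatrix}$, i.e. $x_0 := \tfrac{E}{2b}$; a one-line matrix computation (compare the displayed matrices preceding \eqref{f.tmic}) shows $M_{\omega_s}(F_2,E) = M_{\omega_s}(F_0,E)\,M_{\omega_s}(F_1,E)$, so the matrix recursion in fact holds for all $k\ge 1$.

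The recursion \eqref{f.tmrec} then follows from linear algebra in $\mathrm{SL}(2,\R)$ (every transfer matrix here has determinant one). Using the identity $\mathrm{Tr}(AB) + \mathrm{Tr}(AB^{-1}) = \mathrm{Tr}(A)\,\mathrm{Tr}(B)$ together with $\mathrm{Tr}(N^{-1}) = \mathrm{Tr}(N)$ for $N\in\mathrm{SL}(2,\R)$, and noting that the recursion one step lower gives $M_{\omega_s}(F_{k-1},E)\,M_{\omega_s}(F_k,E)^{-1} = M_{\omega_s}(F_{k-2},E)^{-1}$, one obtains $2x_{k+1} + 2x_{k-2} = (2x_{k-1})(2x_k)$, which is exactly \eqref{f.tmrec} for $k\ge 2$.

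For the invariant, I would substitute \eqref{f.tmrec} into the definition \eqref{f.tminv}: the contribution of $(2x_k x_{k-1} - x_{k-2})^2$ and of $-2(2x_k x_{k-1}-x_{k-2})x_kx_{k-1}$ produce $4x_k^2x_{k-1}^2$ and $-4x_k^2x_{k-1}^2$, which cancel, and the surviving terms reassemble into $x_{k-2}^2 + x_k^2 + x_{k-1}^2 - 2x_{k-2}x_kx_{k-1} - 1 = I_{k-1}$. Hence $I_k$ is independent of $k$. To pin down its value, evaluate at $k=2$: plug the explicit expressions \eqref{f.tmic} (and $x_0 = \tfrac{E}{2b}$) into $I = x_1^2 + x_2^2 + x_3^2 - 2x_1x_2x_3 - 1$; using $x_2 = \tfrac{2a}{b}x_1^2 - \tfrac{a^2+b^2}{2ab}$ and $x_3 = 2x_1x_2 - x_0 = x_1\bigl(2x_2 - \tfrac ab\bigr)$ makes the $x_3$-terms collapse, all $E$-dependence cancels, and one is left with $\bigl(\tfrac{a^2+b^2}{2ab}\bigr)^2 - 1$, which also re-confirms the claimed $E$-independence.

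There is no genuine obstacle here: the entire lemma is a finite computation once the matrix recursion is in place. The only points requiring care are the bookkeeping of the index range of the matrix recursion and the correct identification of the seed value $x_0 = E/(2b)$ (coming from a single-site transfer matrix rather than, say, from $x_1$), which is why I would make the extension to $M_{\omega_s}(F_0,E)$ explicit before stating \eqref{f.tmrec}.
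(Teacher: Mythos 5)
Your proof is correct and takes essentially the same route as the paper: exploit the substitution relation $s_{k+1}=s_ks_{k-1}$ to obtain the transfer-matrix recursion, deduce the scalar recursion \eqref{f.tmrec} from the $\mathrm{SL}(2,\R)$ trace identity, check $k$-independence of $I_k$ by direct substitution, and then pin down the constant by a single evaluation. The small differences are presentational: the paper simply cites the diagonal case for the trace recursion where you spell out the $\mathrm{Tr}(AB)+\mathrm{Tr}(AB^{-1})=\mathrm{Tr}(A)\,\mathrm{Tr}(B)$ argument; and the paper evaluates the invariant at $(x_1,x_0,x_{-1})=T^{-2}(x_3,x_2,x_1)=\bigl(\tfrac{E}{2a},\tfrac{E}{2b},\tfrac{a^2+b^2}{2ab}\bigr)$, which is a touch slicker because $x_{-1}$ is already $E$-independent, whereas you evaluate at $(x_1,x_2,x_3)$ and let the $E$-dependence cancel; both computations land on the same value. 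Your explicit check that $M_{\omega_s}(F_2,E)=M_{\omega_s}(F_0,E)M_{\omega_s}(F_1,E)$ with $M_{\omega_s}(F_0,E)=\tfrac1b\begin{pmatrix} E & -1\\ b^2 & 0\end{pmatrix}$ is a nice way to justify the index range and matches the paper's implicit choice $x_0=E/(2b)$.
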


\begin{proof}
Since $\omega_s$ restricted to $\{n \ge 1\}$ coincides with $u$
and the prefixes $s_k$ of $u$ of length $F_k$ obey $s_{k+1} = s_k
s_{k-1}$ for $k \ge 2$ by construction (cf.~\eqref{f.fibrec}), the
recursion \eqref{f.tmrec} follows as in the diagonal case; compare
\cite{D00,D07a,S87}. This recursion in turn implies readily that
\eqref{f.tminv} is $k$-independent.

In particular, the $x_k$'s are again generated by the trace map
$$
T(x,y,z) = (2xy - z , x , y)
$$
and the preserved quantity is again
$$
I(x,y,z) = x^2 + y^2 + z^2 - 2xyz - 1.
$$
The only difference between the diagonal and the off-diagonal model can be found in the initial
conditions. How are $x_1, x_0, x_{-1}$ obtained? Observe that the trace map is invertible and
hence we can apply its inverse twice to the already defined quantity $(x_3,x_2,x_1)$. We have
$$
T^{-1}(x,y,z) = (y,z, 2yz - x)
$$
and hence, using \eqref{f.tmic},
\begin{align*}
(x_1,x_0,x_{-1}) & = T^{-2} ( x_3 , x_2 , x_1 ) \\
& = T^{-2} \left( \frac{E^3 - 2 E a^2 - E b^2}{2a^2 b} , \frac{E^2 - a^2 - b^2}{2ab}, \frac{E}{2a}
\right) \\
& = T^{-1} \left( \frac{E^2 - a^2 - b^2}{2ab}, \frac{E}{2a} , 2 \frac{(E^2 - a^2 - b^2)E}{4a^2 b} -
\frac{E^3 - 2 E a^2 - E b^2}{2a^2 b} \right) \\
& = T^{-1} \left( \frac{E^2 - a^2 - b^2}{2ab}, \frac{E}{2a} , \frac{E}{2b} \right) \\
& = \left( \frac{E}{2a} , \frac{E}{2b} , 2 \frac{E^2}{4ab} - \frac{E^2 - a^2 - b^2}{2ab} \right) \\
& = \left( \frac{E}{2a} , \frac{E}{2b} , \frac{a^2 + b^2}{2ab} \right)
\end{align*}
It follows that
\begin{align*}
I(x_{k+1} , x_k , x_{k-1}) & = I(x_1 , x_0 , x_{-1}) \\
& = \frac{E^2}{4a^2} + \frac{E^2}{4b^2} + \frac{(a^2 + b^2)^2}{4a^2b^2} - 2 \frac{E^2(a^2 + b^2)}{8a^2b^2}
- 1 \\
& = \frac{(a^2 + b^2)^2}{4a^2b^2} - 1
\end{align*}
for every $k \ge 0$.
\end{proof}

It is of crucial importance for the spectral analysis that, as in
the diagonal case, the invariant is energy-independent and
strictly positive when $a \not= b$!

\begin{Lm}\label{l.l2}
The spectrum of $H_\omega$ is independent of $\omega$ and may be
denoted by $\Sigma_{a,b}$. With
$$
\sigma_k = \{ E \in \R : |x_k| \le 1 \},
$$
we have
\begin{equation}\label{f.descspec}
\Sigma_{a,b} = \bigcap_{k \ge 1} \sigma_k \cup \sigma_{k+1}.
\end{equation}
Moreover, for every $E \in \Sigma_{a,b}$ and $k \ge 2$,
\begin{equation}\label{t.tmbound}
|x_k| \le 1 + \left( \frac{(a^2 + b^2)^2}{4a^2b^2} - 1 \right)^{1/2}
\end{equation}
and for $E \not\in \Sigma_{a,b}$, $|x_k|$ diverges
super-exponentially.
\end{Lm}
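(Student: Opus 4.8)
The plan is to adapt S\"ut\H{o}'s analysis of the diagonal Fibonacci Hamiltonian \cite{S87} (see also Lemma~\ref{l.b1}) essentially verbatim: by Lemma~\ref{l.l1}, the $x_k$ obey the same trace recursion \eqref{f.tmrec} and possess the same type of conserved quantity \eqref{f.tminv} as in the diagonal case, the only change being that the value of the invariant is now $I=\frac{(a^2+b^2)^2}{4a^2b^2}-1$, which is $\ge 0$ in general and $>0$ precisely when $a\ne b$. First I would dispatch the $\omega$-independence of the spectrum. The hull $\Omega_{a,b}$ is minimal under the shift, so for any $\omega,\omega'\in\Omega_{a,b}$ there is a sequence of shifts of $\omega$ converging to $\omega'$; since $\omega\mapsto H_\omega$ is strongly continuous and shifting $\omega$ conjugates $H_\omega$ by a translation of $\ell^2(\Z)$, $H_{\omega'}$ is a strong limit of operators unitarily equivalent to $H_\omega$, and upper semicontinuity of the spectrum under strong resolvent convergence gives $\sigma(H_{\omega'})\subseteq\sigma(H_\omega)$; by symmetry these sets coincide, and we denote the common set by $\Sigma_{a,b}$.

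The technical heart is the dichotomy for orbits of the trace map $T(x,y,z)=(2xy-z,x,y)$ on a level surface $\{I(x,y,z)=I\}$ with $I\ge 0$. Reading the invariant as a quadratic equation in the middle coordinate, three consecutive iterates satisfy
$$
x_k=x_{k-1}x_{k+1}\pm\sqrt{(1-x_{k-1}^2)(1-x_{k+1}^2)+I}.
$$
From this, exactly as in \cite{S87}, one extracts two facts: (i) if $|x_{k-1}|\le 1$ and $|x_{k+1}|\le 1$ then $|x_k|\le 1+\sqrt I$, using the elementary inequality $\sqrt{x_{k-1}^2x_{k+1}^2}+\sqrt{(1-x_{k-1}^2)(1-x_{k+1}^2)}\le 1$ together with $\sqrt{u+I}\le\sqrt u+\sqrt I$; and (ii) if $|x_k|>1$ and $|x_{k+1}|>1$ then $|x_n|\to\infty$, in fact super-exponentially, since once the orbit has left the unit box with two consecutive large coordinates it never returns and the iterates then satisfy, for $n$ large, an inequality of the form $|x_{n+1}|\ge c\,|x_n|\,|x_{n-1}|$. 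Step (ii) — the trace-map escape lemma together with its quantitative (super-exponential) form — is the part I expect to require the most care, although it is a routine, if slightly delicate, translation of S\"ut\H{o}'s argument: because the invariant $I$ can be large, the relevant trapping region is the enlarged box $\{|x_k|\le 1+\sqrt I\}$ rather than the unit box, so one must rule out an orbit lingering in the annular region $1<|x_k|\le 1+\sqrt I$ for two steps in a row without escaping.

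Granting (i) and (ii), the representation \eqref{f.descspec} and the bound \eqref{t.tmbound} follow by assembly. Fact (ii) gives $\sigma_{k+2}\subseteq\sigma_k\cup\sigma_{k+1}$, so $\{\sigma_k\cup\sigma_{k+1}\}_{k\ge 1}$ is a nested sequence of compacta and the intersection in \eqref{f.descspec} is meaningful. If $E\in\bigcap_k(\sigma_k\cup\sigma_{k+1})$, then no two consecutive traces exceed $1$, so applying (i) to the triple $(x_{k-1},x_k,x_{k+1})$ — whose outer entries are then both $\le 1$ — yields $|x_k|\le 1+\sqrt I$ for every $k\ge 2$, which is \eqref{t.tmbound}; moreover $|x_k|\le 1$ for infinitely many $k$, so the corresponding block transfer matrices $M_{\omega_s}(F_k,E)$ are power-bounded, and splicing these bounded blocks along the hierarchical structure of $\omega_s$ produces a polynomially bounded generalized eigenfunction, whence $E\in\Sigma_{a,b}$ by Schnol's theorem. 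For the reverse inclusion $\Sigma_{a,b}\subseteq\bigcap_k(\sigma_k\cup\sigma_{k+1})$ I would use that the $F_k$-periodic approximants $H^{(k)}$, which satisfy $\sigma(H^{(k)})=\sigma_k$ by Floquet theory, converge to $H_{\omega_s}$ in the strong resolvent sense, together with the nesting just established. Conversely, if $E\notin\sigma_k\cup\sigma_{k+1}$ for some $k$, then $|x_k|,|x_{k+1}|>1$, so by (ii) the trace orbit diverges super-exponentially; this both establishes the final assertion of the lemma and, since then $\|M_{\omega_s}(F_n,E)\|\ge 2|x_n|\to\infty$ fast enough to force every solution of \eqref{f.eve} to be unbounded, shows $E\notin\Sigma_{a,b}$, completing the proof of \eqref{f.descspec}.
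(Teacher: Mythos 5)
Your plan is essentially correct, and most of it mirrors the paper: minimality plus strong resolvent convergence for $\omega$-independence, S\"ut\H{o}'s trace-map dichotomy for the nesting $\sigma_{k+2}\subseteq\sigma_k\cup\sigma_{k+1}$ and for the super-exponential divergence, the invariant-based computation for \eqref{t.tmbound}, and periodic approximation for the inclusion $\Sigma_{a,b}\subseteq\bigcap_k(\sigma_k\cup\sigma_{k+1})$.

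Where you genuinely diverge is the reverse inclusion $\bigcap_k(\sigma_k\cup\sigma_{k+1})\subseteq\Sigma_{a,b}$. You propose to splice together power-bounded blocks $M_{\omega_s}(F_k,E)$ into a polynomially bounded generalized eigenfunction and then invoke Schnol's theorem. This can be made to work, but it conceals a nontrivial amount of labor: passing from ``$|x_k|$ bounded'' to a polynomial bound on $\|M_{\omega_s}(n,E)\|$ for all $n$ is precisely the Iochum--Testard interpolation machinery (the analogues of Lemmas~\ref{l.b1} and~\ref{l.b2} for the off-diagonal model), and you would have to reprove it here. The paper's route is much lighter: it shows directly that boundedness of $\{x_k\}$ forces every nontrivial solution to be non-$\ell^2$ at $+\infty$, via a Gordon-type two-block argument based on Cayley--Hamilton and the squares at the start of $\omega_s$ (carried out in the singular-continuity subsection); combined with the standard fact that $E\notin\sigma(H_\omega)$ would yield a decaying solution via the resolvent, this immediately gives $E\in\Sigma_{a,b}$ without any quantitative transfer-matrix bounds. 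Your approach buys more (it would also yield the polynomial solution bounds used later anyway), but as a proof of this lemma alone it is substantially heavier.

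Two smaller remarks. First, you invoke ``Floquet theory'' for $\sigma(H^{(k)})=\sigma_k$ as a black box; the paper explicitly flags a subtlety here, namely that the transfer matrices $T_\omega(n,E)=\frac{1}{\omega_n}\bigl(\begin{smallmatrix}E&-1\\\omega_n^2&0\end{smallmatrix}\bigr)$ acting on $U_n=(u_n,\omega_n u_{n-1})^T$ are not the standard Jacobi monodromy matrices, and the Floquet dichotomy carries over only because the $\omega_n$'s are uniformly bounded away from $0$ and $\infty$; this deserves a sentence. Second, your worry about orbits ``lingering in the annular region $1<|x_k|\le 1+\sqrt I$'' is overcautious: S\"ut\H{o}'s escape lemma, as stated and proved in \cite{S87}, already holds for arbitrary $I>0$ and asserts escape as soon as two consecutive $|x_k|$ exceed $1$, with no reference to the size of $I$; no new argument is needed there.
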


\begin{proof}
It is well known that the hull $\Omega_{a,b}$ together with the
standard shift transformation is minimal. In particular, every
$\omega \in \Omega_{a,b}$ may be approximated pointwise by a
sequence of shifts of any other $\tilde \omega \in \Omega_{a,b}$.
The associated operators then converge strongly and we get
$\sigma(H_\omega) \subseteq \sigma(H_{\tilde \omega})$. Reversing
the roles of $\omega$ and $\tilde \omega$, the first claim follows.

So let $\Sigma_{a,b}$ denote the common spectrum of the operators
$H_\omega$, $\omega \in \Omega_{a,b}$. We have $\|H_\omega\| \le
\max \{ 2a,2b \}$. Thus, $\Sigma_{a,b} \subseteq [- \max \{ 2a,2b \}
, \max \{ 2a,2b \} ] =: I_{a,b}$. For $E \in I_{a,b}$, we have that
at least one of $x_1$, $x_0$ belongs to $[-1,1]$. This observation
allows us to use the exact same arguments S\"ut\H{o} used to prove
\eqref{f.descspec} for the diagonal model in \cite{S87}.

The only point where care needs to be taken is the claim
that $\sigma_k$ is the spectrum of the periodic Jacobi
matrix obtained by repeating the values $\omega_s$ takes on $\{ 1
\le n \le F_k \}$ periodically on the off-diagonals. This,
however, follows from the general theory of periodic Jacobi
matrices, which relies on the diagonalization of the monodromy
matrix (which is $M_{\omega_s}(F_k,E)$ in this case) in order to
obtain Floquet solutions and in particular discriminate between
those energies that permit exponentially growing solutions and
those that do not. This distinction works just as well here, but
one needs to use that the $\omega_n$'s that enter in the $U_n$'s
are uniformly bounded away from zero and infinity.

Thus, after paying attention to this fact, we may now proceed along
the lines of S\"ut\H{o}. Let us describe the main steps of the
argument. Since at least one of $x_1$, $x_0$ belongs to $[-1,1]$, we
have a result analogous to \cite[Lemma~2]{S87} with the same proof
as given there. Namely, the sequence $\{x_k\}_{k \ge 0}$ is
unbounded if and only if there exists $k$ such that $|x_{k}|> 1$ and
$|x_{k+1}| > 1$. Moreover, we then have $|x_{k + l}| > c^{F_l}$ for
some $c > 1$ and all $l \ge 0$. This shows
$$
\sigma_k \cup \sigma_{k+1} = \bigcup_{l \ge 0} \sigma_{k + l}.
$$
Using now the fact that the $F_k$ periodic Jacobi matrices with
spectrum $\sigma_k$ converge strongly to $H_{\omega_s}$, we obtain
$$
\Sigma_{a,b} \subseteq \bigcap_{k \ge 1} \overline{\bigcup_{l \ge 0}
\sigma_{k + l}} = \bigcap_{k \ge 1} \overline{\sigma_k \cup
\sigma_{k+1}} = \bigcap_{k \ge 1} \sigma_k \cup \sigma_{k+1},
$$
since the spectra $\sigma_k$ and $\sigma_{k+1}$ are closed sets.
Thus, we have one inclusion in \eqref{f.descspec}.

Next, suppose $E \in \bigcap_{k \ge 1} \sigma_k \cup \sigma_{k+1}$
If $k \ge 1$ is such that $|x_k| > 1$, then $|x_{k-1}| \le 1$ and
$|x_{k+1}| \le 1$. Since we have
$$
x_{k+1}^2 + x_k^2 + x_{k-1}^2 - 2 x_{k+1} x_k x_{k-1} -1 =
\frac{(a^2 + b^2)^2}{4a^2b^2} - 1 ,
$$
this implies
$$
x_k = x_{k+1} x_{k-1} \pm \left( 1 - x_{k+1}^2 - x_{k-1}^2 +
x_{k+1}^2 x_{k-1}^2 + \frac{(a^2 + b^2)^2}{4a^2b^2} - 1
\right)^{1/2}
$$
and hence
$$
|x_k| \le |x_{k+1} x_{k-1}| + \left( (1 - x_{k+1}^2) (1 -
x_{k-1}^2) + \left( \frac{(a^2 + b^2)^2}{4a^2b^2} - 1 \right)
\right)^{1/2}
$$
which, using $|x_{k-1}| \le 1$ and $|x_{k+1}| \le 1$ again, implies
the estimate \eqref{t.tmbound} for $E \in \bigcap_{k \ge 1} \sigma_k
\cup \sigma_{k+1}$. We will show in the next subsection that the
boundedness of the sequence $\{x_k\}_{k \ge 0}$ implies that, for
arbitrary $\omega \in \Omega_{a,b}$, no solution of the difference
equation \eqref{f.eve} is square-summable at $+\infty$.
Consequently, such $E$'s belong to $\Sigma_{a,b}$.\footnote{This
follows by a standard argument: If $E \not\in \Sigma_{a,b}$, then
$(H_\omega - E)^{-1}$ exists and hence $(H_\omega-E)^{-1} \delta_0$
is an $\ell^2(\Z)$ vector that solves \eqref{f.eve} away from the
origin. Choosing its values for $n \ge 1$, say, and then using
\eqref{f.eve} to extend it to all of $\Z$, we obtain a solution that
is square-summable at $+\infty$.} This shows the other inclusion in
\eqref{f.descspec} and hence establishes it. Moreover, it follows
that \eqref{t.tmbound} holds for every $E \in \Sigma_{a,b}$.

Finally, from the representation \eqref{f.descspec} of
$\Sigma_{a,b}$ and our observation above about unbounded sequences
$\{x_k\}_{k \ge 0}$, we find that $|x_k|$ diverges
super-exponentially for $E \not\in \Sigma_{a,b}$. This concludes
the proof of the lemma.
\end{proof}

\begin{Lm}\label{l.l3}
For every $E \in \R$, there is $\gamma(E) \ge 0$ such that
$$
\lim_{n \to \infty} \frac{1}{n} \log \| M_\omega(n,E) \| =
\gamma(E),
$$
uniformly in $\omega \in \Omega_{a,b}$.
\end{Lm}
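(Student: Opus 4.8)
The plan is to recognize $n\mapsto M_\omega(n,E)$ as a locally constant $\mathrm{SL}(2,\R)$-cocycle over the subshift $(\Omega_{a,b},T)$ and to combine Kingman's subadditive ergodic theorem with a uniform subadditive ergodic theorem adapted to the Fibonacci structure. First I would set $A_E(\omega)=T_\omega(1,E)=\frac{1}{\omega_1}\begin{pmatrix} E & -1 \\ \omega_1^2 & 0\end{pmatrix}$; since $\omega_1\in\{a,b\}$, the map $A_E:\Omega_{a,b}\to\mathrm{SL}(2,\R)$ is locally constant, hence continuous, and $\|A_E(\omega)^{\pm1}\|$ is bounded uniformly in $\omega$. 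With $T$ the left shift one has $M_\omega(n,E)=A_E(T^{n-1}\omega)\cdots A_E(\omega)$, so the functions $f_n(\omega):=\log\|M_\omega(n,E)\|$ form a subadditive cocycle, $f_{n+m}(\omega)\le f_m(T^n\omega)+f_n(\omega)$, with $0\le f_n(\omega)\le n\sup_{\omega'}\log\|A_E(\omega')\|$ — the lower bound because $\det M_\omega(n,E)=1$ forces $\|M_\omega(n,E)\|\ge1$.

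Next I would record the dynamical input. The hull $\Omega_{a,b}$ is the subshift generated by the Fibonacci substitution $S(a)=ab$, $S(b)=a$, which is primitive (e.g.\ $S^2(a)=aba$ and $S^2(b)=ab$ each contain both $a$ and $b$), so $(\Omega_{a,b},T)$ is minimal and uniquely ergodic; let $\nu$ be its unique invariant probability measure. Kingman's subadditive ergodic theorem then gives, for $\nu$-a.e.\ $\omega$ and in $L^1(\nu)$,
$$\tfrac1n f_n(\omega)\ \longrightarrow\ \gamma(E):=\lim_{n\to\infty}\tfrac1n\!\int f_n\,d\nu=\inf_{n\ge1}\tfrac1n\!\int f_n\,d\nu,$$
and $\gamma(E)\ge0$ since every $f_n\ge0$. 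This already produces the candidate limit $\gamma(E)\ge0$; what remains is to upgrade $\nu$-a.e.\ convergence to convergence uniform in $\omega\in\Omega_{a,b}$.

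That last step is the crux, and it is not a formal consequence of unique ergodicity: subadditive cocycles over uniquely ergodic systems need not converge uniformly. The point is that a primitive substitution subshift is linearly recurrent (equivalently, satisfies Boshernitzan's condition (B)), and for such systems the uniform subadditive ergodic theorem applies and yields $\sup_{\omega\in\Omega_{a,b}}\big|\tfrac1n f_n(\omega)-\gamma(E)\big|\to0$; this is the exact analogue of the known uniform existence of the Lyapunov exponent in the diagonal Fibonacci case, whose proof transcribes verbatim. If one wants the appendix fully self-contained, I would instead spell this out from the substitution hierarchy: every subword $w$ of $u$ of length $n$ factors as $w=p\,S^k(c_1)\cdots S^k(c_m)\,s$ with $p$ a suffix and $s$ a prefix of level-$k$ blocks, $m\asymp n/F_k$, so $M_\omega(n,E)$ is a product of at most $Cn/F_k$ transfer matrices over the two blocks $S^k(a)$, $S^k(b)$ together with two boundary factors of norm $\le C^{F_k}$; estimating the block matrices through the trace recursion of Lemma~\ref{l.l1} (together with the a.e.\ statement already proved) and letting $k=k(n)$ grow slowly forces $\tfrac1n f_n(\omega)\to\gamma(E)$ at a rate depending only on the subword combinatorics of $u$, which is shared by every $\omega\in\Omega_{a,b}$. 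The main obstacle is precisely this uniformity; obtaining $\gamma(E)$ and the bound $\gamma(E)\ge0$ is routine.
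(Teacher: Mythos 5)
Your proposal matches the paper's approach: the paper simply invokes the uniform subadditive ergodic theorem (citing Damanik--Lenz and Hof), which is exactly what you do after correctly setting up $\log\|M_\omega(n,E)\|$ as a subadditive cocycle over the primitive (hence linearly recurrent, uniquely ergodic) Fibonacci subshift. You rightly flag that uniformity is the nontrivial point requiring linear recurrence rather than bare unique ergodicity, and your sketch of a self-contained block-decomposition argument is the standard proof of that theorem in this setting.
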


\begin{proof}
This follows directly from the uniform subadditive ergodic theorem;
compare \cite{DL99b,Ho}.
\end{proof}

\begin{Lm}\label{l.l4}
The set $\mathcal{Z}_{a,b} := \{ E \in \R : \gamma(E) = 0 \}$ has
zero Lebesgue measure.
\end{Lm}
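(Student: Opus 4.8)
The plan is to deduce this from Kotani theory, exactly along the lines of S\"ut\H{o}'s proof that the diagonal Fibonacci spectrum has zero Lebesgue measure \cite{S89}. First I would record the ergodic framework: the shift $S$ on $\Omega_{a,b}$ is minimal and uniquely ergodic (it is the hull of a primitive substitution), so there is a unique $S$-invariant Borel probability measure $\nu$ on $\Omega_{a,b}$, it is ergodic, and $\omega \mapsto H_\omega$ is a measurable covariant family of bounded, nondegenerate Jacobi matrices over $(\Omega_{a,b}, S, \nu)$. By Lemma~\ref{l.l3}, for every $E$ the limit $\gamma(E)$ exists, even uniformly in $\omega$, so it coincides with the Lyapunov exponent of this ergodic family; thus $\mathcal{Z}_{a,b}$ is exactly the set of energies at which the Lyapunov exponent vanishes.

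The key input is Kotani's theorem for ergodic Jacobi matrices whose coefficients take only finitely many values. In our situation the diagonal vanishes and the off-diagonal sequence takes only the two values $a$ and $b$, and the subshift $\Omega_{a,b}$ contains no periodic element: a periodic $\omega \in \Omega_{a,b}$ would force the Fibonacci substitution sequence $u$ to be eventually periodic, which it is not (it is Sturmian; equivalently, the letter frequencies are irrational). Kotani's finite-valued theorem in the aperiodic case then asserts that $\{E : \gamma(E) = 0\}$ has zero Lebesgue measure, which is precisely the statement of the lemma. Equivalently, the $\nu$-almost sure absolutely continuous spectrum of the family $\{H_\omega\}$ is empty, and since this a.c.\ spectrum is the essential closure of $\mathcal{Z}_{a,b}$, we obtain $|\mathcal{Z}_{a,b}| = 0$.

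The one genuinely delicate point — and the step I expect to need the most care — is that the classical finite-valued Kotani theorem is most often stated for Schr\"odinger operators (diagonal potentials taking finitely many values), whereas here the ``disorder'' sits on the off-diagonal. So the main task is to quote, or supply, the Jacobi-matrix version: reflectionlessness of the a.c.\ part of an ergodic Jacobi matrix together with the finiteness of the value set of the coefficients forces the coefficient sequence to be periodic, contradicting the aperiodicity of $\Omega_{a,b}$. Since the diagonal entries vanish here, one can alternatively carry this out by a direct reduction to the Schr\"odinger setting. Once Lemma~\ref{l.l4} is in place it re-proves the zero-measure assertion in Theorem~\ref{t.app1}(i): combining it with the fact that boundedness of $\{x_k\}$ on $\Sigma_{a,b}$ (Lemma~\ref{l.l2}) forces at most polynomial growth of the transfer matrices, hence $\gamma \equiv 0$, on $\Sigma_{a,b}$, one gets $\Sigma_{a,b} \subseteq \mathcal{Z}_{a,b}$ and therefore $|\Sigma_{a,b}| = 0$.
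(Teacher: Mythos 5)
Your proposal takes essentially the same route as the paper: the paper's proof is a one-paragraph citation to Kotani theory (Kotani \cite{Ko}, Damanik's survey \cite{D07b}, Carmona--Lacroix \cite{CL} for the Jacobi-matrix version, Remling \cite{Re} for an explicit statement), together with the observation that one must invoke the Jacobi rather than the Schr\"odinger form of the theorem because the disorder is off-diagonal. You correctly identify all the same ingredients --- minimality and unique ergodicity of the subshift, identification of $\gamma$ from Lemma~\ref{l.l3} with the ergodic Lyapunov exponent, aperiodicity of the Fibonacci hull, and the finite-valued Kotani theorem applied in the Jacobi setting --- and you flag the same delicate point the paper flags (that the classical statement is for Schr\"odinger operators and needs to be transferred). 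Your sketch is a correct, more explicit version of the paper's terse argument, and your suggested fallback of a direct reduction to the Schr\"odinger setting, using that the diagonal vanishes, is a sensible alternative to quoting the Jacobi version outright.
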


\begin{proof}
This is one of the central results of Kotani theory; see
\cite{Ko} and also \cite{D07b}. Note that these papers only
discuss the diagonal model. Kotani theory for Jacobi matrices is
discussed in Carmona-Lacroix \cite{CL} and the result needed can
be deduced from what is presented there. For a recent reference
that states a result sufficient for our purpose explicitly, see
Remling \cite{Re}.
\end{proof}

\begin{Lm}\label{l.l5}
We have $\Sigma_{a,b} = \mathcal{Z}_{a,b}$.
\end{Lm}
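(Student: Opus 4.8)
The plan is to establish the two inclusions separately. The inclusion $\mathcal{Z}_{a,b}\subseteq\Sigma_{a,b}$ is the soft one: I would use the standard fact that the Lyapunov exponent is strictly positive on the resolvent set. Indeed, if $E\notin\Sigma_{a,b}$, then $(H_\omega-E)^{-1}$ exists for every $\omega$, and by the Combes--Thomas estimate its matrix elements decay exponentially off the diagonal. Hence $n\mapsto\langle\delta_n,(H_\omega-E)^{-1}\delta_0\rangle$ is a solution of \eqref{f.eve} (away from the origin, and then extended to all of $\Z$) that decays exponentially as $n\to+\infty$, while a second, linearly independent solution decays exponentially as $n\to-\infty$; they cannot be proportional, since otherwise $(H_\omega-E)^{-1}\delta_0$ would be an $\ell^2$ eigenfunction at the non-eigenvalue $E$. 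This exponential dichotomy forces $\gamma(E)>0$, so $E\notin\mathcal{Z}_{a,b}$, which gives $\mathcal{Z}_{a,b}\subseteq\Sigma_{a,b}$.

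For the reverse inclusion $\Sigma_{a,b}\subseteq\mathcal{Z}_{a,b}$, I would combine the trace-map bound of Lemma~\ref{l.l2} with the uniform convergence in Lemma~\ref{l.l3}. Since $\gamma(E)=\lim_{n\to\infty}\tfrac1n\log\|M_\omega(n,E)\|$ exists and is independent of $\omega$, it suffices to evaluate the limit along the subsequence $n=F_k$ for $\omega=\omega_s$, i.e.\ to show that $\tfrac1{F_k}\log\|M_k\|\to0$ for $E\in\Sigma_{a,b}$, where $M_k=M_{\omega_s}(F_k,E)$. Fix $E\in\Sigma_{a,b}$ and set $\beta=1+\bigl(\tfrac{(a^2+b^2)^2}{4a^2b^2}-1\bigr)^{1/2}$, so that $|x_k|\le\beta$ for all $k\ge2$ by Lemma~\ref{l.l2}. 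Cayley--Hamilton gives $M_k=2x_kI-M_k^{-1}$, and the recursions $M_{k+1}=M_{k-1}M_k$ and $M_k=M_{k-2}M_{k-1}$ from Lemma~\ref{l.l1} (valid for $k$ large, by position-independence of the one-step matrices) give $M_{k-1}M_k^{-1}=M_{k-2}^{-1}$, whence
$$M_{k+1}=2x_kM_{k-1}-M_{k-2}^{-1}.$$
Since $\|M_{k-2}^{-1}\|=\|M_{k-2}\|$ for $\mathrm{SL}(2,\R)$ matrices, this yields $\|M_{k+1}\|\le2\beta\,\|M_{k-1}\|+\|M_{k-2}\|$. Comparing with the scalar recursion $m_{k+1}=2\beta\,m_{k-1}+m_{k-2}$, whose solutions are $O(r^k)$ with $r>1$ the largest root of $t^3-2\beta t-1$, we obtain $\|M_k\|\le Cr^k$ with $C$ independent of $k$. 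Because $F_k$ grows exponentially in $k$, $\tfrac1{F_k}\log\|M_k\|\le\tfrac{\log C+k\log r}{F_k}\to0$, so $\gamma(E)=0$ and $E\in\mathcal{Z}_{a,b}$.

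The substantive work is all in the second inclusion, but even there the only delicate points are bookkeeping rather than genuine obstacles: one must check that the matrix recursion holds for $\omega_s$ in the range of indices used (it does, since $\omega_s$ coincides with $u$ on $\{n\ge1\}$, the prefixes $s_k$ of $u$ satisfy $s_{k+1}=s_ks_{k-1}$, and the transfer matrix over a word depends only on the word), deal with the $F_0=F_1$ degeneracy by absorbing the finitely many small-index terms $\|M_1\|,\|M_2\|,\|M_3\|$ into the constant $C$, and track that $|x_k|\le\beta$ is available for $k\ge2$. Modulo these routine points, the argument is a direct transcription of S\"ut\H{o}'s reasoning for the diagonal model, so I do not anticipate a real difficulty.
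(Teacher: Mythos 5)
Your proof is correct, but it takes a route that is not quite either of the two the paper sketches, and the differences are worth noting. For the soft inclusion $\mathcal{Z}_{a,b}\subseteq\Sigma_{a,b}$, the paper constructs Weyl sequences by truncation when $\gamma(E)=0$; you argue the contrapositive via Combes--Thomas, using an exponentially decaying solution at $+\infty$ to force $\gamma(E)>0$. Both are standard and correct, though the ``cannot be proportional'' remark in your version is extraneous: a single nontrivial exponentially decaying solution already gives $\|M_\omega(n,E)\|\ge c\,e^{cn}$ by the $\det=1$ identity, which is all you need. For the hard inclusion $\Sigma_{a,b}\subseteq\mathcal{Z}_{a,b}$, the paper offers two options: (a) prove a polynomial bound for $\|M_\omega(n,E)\|$ for \emph{all} $n$ via Zeckendorf-type interpolation (as in \cite{IT,DL99b}), or (b) combine absence of decaying solutions with Oseledec. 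You do something lighter than (a): bound $\|M_k\|$ only along the Fibonacci subsequence $n=F_k$, via the recursion $M_{k+1}=2x_kM_{k-1}-M_{k-2}^{-1}$ and the trace bound of Lemma~\ref{l.l2}, and then exploit the fact that the uniform existence of the full limit (Lemma~\ref{l.l3}) lets you read off $\gamma(E)$ from any one subsequence. This cleanly bypasses the interpolation machinery that would be needed to bound the matrices at arbitrary $n$, and is arguably the most economical of the three routes. One small point you should make explicit: concluding $\gamma(E)=0$ from $\limsup_k\tfrac1{F_k}\log\|M_k\|\le 0$ requires the a priori inequality $\gamma(E)\ge 0$, which holds because $\det M_\omega(n,E)=1$ forces $\|M_\omega(n,E)\|\ge 1$.
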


\begin{proof}
The inclusion $\Sigma_{a,b} \supseteq \mathcal{Z}_{a,b}$ holds by
general principles. For example, one can construct Weyl sequences by
truncation when $\gamma(E) = 0$. The inclusion $\Sigma_{a,b}
\subseteq \mathcal{Z}_{a,b}$ can be proved in two ways. Either one
uses the boundedness of $x_k$ for energies $E \in \Sigma_{a,b}$ to
prove explicit polynomial upper bounds for $\|M_\omega(n,E)\|$ (as
in \cite{IT} for $\omega = \omega_s$ or in \cite{DL99b} for general
$\omega \in \Omega_{a,b}$), or one combines the proof of the absence
of decaying solutions at $+\infty$ for $E \in \Sigma_{a,b}$ given in
the next subsection with Osceledec's Theorem, which states that
$\gamma(E) > 0$ would imply the existence of an exponentially
decaying solution at $+\infty$. Here we use one more time that $U_n$
is comparable in norm to $(u_n,u_{n-1})^T$.
\end{proof}

\begin{proof}[Proof of Theorems~\ref{t.app1} and \ref{t.app2}.]
The existence of the uniform spectrum $\Sigma_{a,b}$ was shown in
Lemma~\ref{l.l3} and the fact that $\Sigma_{a,b}$ has zero
Lebesgue measure follows from Lemmas~\ref{l.l4} and \ref{l.l5}.
The set of bounded orbits of the restriction of the trace map
$T:\mathbb{R}^3\to \mathbb{R}^3$ to the invariant surface
$I(x,y,z)=C\equiv\frac{(a^2 + b^2)^2}{4a^2b^2} - 1, C>0,$ is
hyperbolic; see \cite{Can} (and also \cite{DG09a} for $C$
sufficiently small and \cite{Cas} for $C$ sufficiently large). Due
to Lemma~\ref{l.l2}, the points of the spectrum correspond to the
points of the intersection of the line of the initial conditions
$$
\ell_{a,b}\equiv\left\{ \left( \frac{E}{2a} , \frac{E}{2b} , \frac{a^2
+ b^2}{2ab} \right) : E \in \mathbb{R}\right\}
$$
with the stable manifolds of the hyperbolic set of bounded orbits.
Properties~(ii) and (iii) can be proved in exactly the same way as
Theorem~6.5 in \cite{Can}. The line $\ell_{a,b}$ intersects the
stable lamination of the hyperbolic set transversally for
sufficiently small $C > 0$, as can be shown in the same way as for
the diagonal Fibonacci Hamiltonian with a small coupling constant;
see \cite{DG09a}. Therefore the spectrum $\Sigma_{a,b}$ for close
enough $a$ and $b$ is a dynamically defined Cantor set, and the
properties (iv)--(vi) follow; see \cite{DEGT, DG09a, Ma, MM, P, PT}
and references therein. The statement (vii) follows as in the diagonal case since the thickness of $\Sigma_{a,b}$ tends to infinity as $\frac{a^2 +
b^2}{2ab}$ approaches $1$.
\end{proof}

Notice that a proof of the transversality of the line $\ell_{a,b}$ to
the stable lamination of the hyperbolic set of bounded orbits for
arbitrary $a\ne b$ would imply the properties (iv)--(vi) for these
values of $a$ and $b$.

\subsection{Singular Continuous Spectrum}\label{s.s3}

In this subsection we prove Theorem~\ref{t.app3}. Given the results from the previous subsection, we can
follow the proofs from the diagonal case quite closely.

\begin{proof}[Proof of Theorem~\ref{t.app3}.]
Since the absence of absolutely continuous spectrum follows from
zero measure spectrum, we only need to show the absence of point
spectrum. It was shown by Damanik and Lenz \cite{DL99a} that, given any
$\omega \in \Omega_{a,b}$ and $k \ge 1$, the restriction of $\omega$
to $\{ n \ge 1 \}$ begins with a square
$$
\omega_1 \ldots \omega_{2F_k} \ldots = \omega_1 \ldots \omega_{F_k} \omega_1 \ldots \omega_{F_k} \ldots
$$
such that $\omega_1 \ldots \omega_{F_k}$ is a cyclic permutation
of $S^k(a)$. By cyclic invariance of the trace, it follows that
$\mathrm{Tr} M_\omega(F_k,E) = 2 x_k(E)$ for every $E$.

The Cayley-Hamilton Theorem, applied to $M_\omega(F_k,E)$, says
that
$$
M_\omega(F_k,E)^2 - \left( \mathrm{Tr} M_\omega(F_k,E) \right)
M_\omega(F_k,E) + I = 0,
$$
which, by the observations above, translates to
$$
M_\omega(2F_k,E) - 2x_k M_\omega(F_k,E) + I = 0.
$$

If $E \in \Sigma_{a,b}$ and $u$ is a solution of the difference
equation \eqref{f.eve}, it therefore follows that
$$
U(2F_k + 1) - 2x_k U(F_k + 1) + U(1) = 0.
$$
If $u$ does not vanish identically, this shows that $u_n \not\to
0$ as $n \to \infty$ since the $x_k$'s are bounded above and the
$\omega_n$'s are bounded below away from zero. In particular, if
$E \in \Sigma_{a,b}$, then no non-trivial solution of
\eqref{f.eve} is square-summable at $+\infty$ and hence $E$ is not
an eigenvalue. It follows that the point spectrum of $H_\omega$ is
empty.
\end{proof}

\end{appendix}

\end{document}